\newtheorem{thm}{Theorem}[section]
\newtheorem{rmk}[thm]{Remark}
\newtheorem{prop}[thm]{Proposition}
\newtheorem{cor}{Corollary}[thm]
\newtheorem{lema}[thm]{Lemma}
\newtheorem{defi}[thm]{Definition}
\newtheorem{exe}[thm]{Example}
\newtheorem*{teo}{Theorem}
\def\C{\mathbb{C}}
\def\P{\mathbb{P}}
\def\calP{\mathcal{P}}
\def\calL{\mathcal{L}}
\def\calN{\mathcal{N}}
\def\X{\mathscr{X}_{k,d,n}}
\title[GIT for linear systems of hypersurfaces]{On the GIT stability of linear systems of hypersurfaces in projective space}
\author[M. Hattori]{Masafumi Hattori \textsuperscript{1}}
\address{\textsuperscript{1} Department of Mathematics, Faculty of Science, Kyoto University, Kyoto, 606-8502, Japan}
\email{hattori.masafumi.47z@st.kyoto-u.ac.jp}
\author[A. Zanardini]{Aline Zanardini \textsuperscript{2}}
\address{\textsuperscript{2} Mathematical Institute, Leiden University, Niels Bohrweg 1, Leiden, The Netherlands}
\email{a.zanardini@math.leidenuniv.nl}
\date{\today}
\begin{document}

\maketitle

\begin{abstract}
    We consider the problem of classifying linear systems of hypersurfaces (of a fixed degree) in some projective space up to projective equivalence via geometric invariant theory (GIT). We provide an explicit criterion that solves the problem completely. As an application, we consider a few relevant geometric examples recovering, for instance,  Miranda's description of the GIT stability of pencils of plane cubics. Furthermore, we completely describe the GIT stability of Halphen pencils of any index.
\end{abstract}

\setcounter{tocdepth}{2}
\tableofcontents

\section{Introduction}
Let $\mathscr{X}_{k,d,n}$ denote the space of $k$-dimensional linear systems of hypersurfaces of degree $d$ in $\P^n$. In this paper we investigate and solve the problem of parametrizing points in $\X$ up to projective equivalence via geometric invariant theory (GIT). 

In particular, we provide alternative descriptions of the results in \cite{stab}, \cite{quartics}, \cite{azstab}, \cite{quadricsMir}, \cite{wall}, \cite{cubicSurfaces}, and \cite{genus5}. Furthermore, since every pencil of plane curves can be seen as a holomorphic foliation of $\mathbb{P}^2$, we also provide an alternative description of the results in \cite{ca1}, \cite{ca2} and \cite{ca3}. 

If we let $V$ denote the space of sections $H^0(\mathbb{P}^n,\mathcal{O}_{\mathbb{P}^n}(1))$, then the space $\mathscr{X}_{k,d,n}$ can be identified with the Grassmannian of $k$-planes in $S^{d}V^{\ast}$, which can be embedded in $\mathbb{P}(\Lambda^{k+1} S^{d}V^{\ast})\simeq \mathbb{P}^N$ via Pl\"{u}cker coordinates. The group $PGL(V)$ acts naturally on $V$, hence on the invariant subvariety $\mathscr{X}_{k,d,n} \subset \mathbb{P}^N$, and the problem we are interested in consists in constructing the corresponding quotient. But since the group $SL(V)$ also acts naturally on $\X$ with the same orbits as $PGL(V)$, the GIT machinery tells us it suffices to consider the action of the former and to describe what are the (semi)stable points of $\mathscr{X}_{k,d,n}\subset \P^N$ with respect to such action. 

Observing that $SL(V)$ also acts on $S^{d}V^{\ast}, S^{d(k+1)}V^{\ast}$ and on $\mathscr{X}_{k-1,d,n}$, we prove:

\begin{thm}[= Theorem \ref{mainTHM}]
A linear system $\calL \in \X$ is  GIT stable (resp. semistable) if and only if for any choice of generators $H_1,\ldots,H_{k+1}\in \calL$ the hypersurface $H_1+\ldots+H_{k+1}$ of degree $d(k+1)$ is GIT stable (resp. semistable).
\label{thmINTRO}
\end{thm}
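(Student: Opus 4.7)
The plan is to apply the Hilbert--Mumford numerical criterion to both $\calL \in \mathbb{P}(\Lambda^{k+1} S^d V^*)$ and $H_1 \cdots H_{k+1} \in \mathbb{P}(S^{d(k+1)} V^*)$, and to compare their minimum-weight invariants under a fixed one-parameter subgroup. The key insight will be that the minimum $\lambda$-weight of a product of polynomials is additive in the factors, while the minimum $\lambda$-weight of a wedge is bounded above by the same sum, with equality for generators adapted to the natural filtration induced by $\lambda$ on $\calL$.

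More precisely, I would fix a nontrivial one-parameter subgroup $\lambda$ of $SL(V)$, choose coordinates $x_0, \ldots, x_n$ on $V^*$ diagonalizing $\lambda$ with weights $r_0, \ldots, r_n$, and let $U_w \subset S^d V^*$ denote the $\lambda$-weight-$w$ subspace. For a nonzero element in any $SL(V)$-representation, write $\mu_-(\cdot, \lambda)$ for the minimum $\lambda$-weight in its weight decomposition; by Hilbert--Mumford, a projective point is $\lambda$-semistable (resp.\ stable) iff $\mu_- \leq 0$ (resp.\ $<0$), and (semi)stability quantifies this over all nontrivial $\lambda$.

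The heart of the argument then consists of two weight identities. On the product side, writing $H_i = H_i^{(m_i)} + (\text{higher weight})$ with $m_i := \mu_-(H_i, \lambda)$, the lowest-weight component of $H_1 \cdots H_{k+1}$ is $\prod_i H_i^{(m_i)}$, which is nonzero because $S^\bullet V^*$ is an integral domain; hence
\[
\mu_-(H_1 \cdots H_{k+1}, \lambda) = \sum_i \mu_-(H_i, \lambda).
\]
On the Plücker side, consider the decreasing filtration $F^w \calL := \calL \cap \bigoplus_{w'\geq w} U_{w'}$ with jump values $v_1 \geq \cdots \geq v_{k+1}$. Lifting a homogeneous basis of $\operatorname{gr}_F^\bullet \calL$ to $K_1, \ldots, K_{k+1} \in \calL$, the bottom-weight parts $K_i^{(v_i)}$ are linearly independent in $S^d V^*$, so $\bigwedge_i K_i^{(v_i)}$ is the nonzero lowest-weight term of $\bigwedge_i K_i$, giving $\mu_-(\calL, \lambda) = \sum_i v_i$. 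A direct dimension count in $F^\bullet \calL$ then produces the majorization $\sum_i \mu_-(H_i, \lambda) \leq \sum_i v_i$ for any basis, hence
\[
\mu_-(H_1 \cdots H_{k+1}, \lambda) \leq \mu_-(\calL, \lambda),
\]
with equality attained by the adapted basis $K_1, \ldots, K_{k+1}$.

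The theorem then follows formally. The $(\Rightarrow)$ direction uses only the inequality: if $\calL$ is (semi)stable, every product satisfies the same bound and is (semi)stable. Conversely, if every product is (semi)stable, applying the hypothesis to a $\lambda$-adapted basis for each $\lambda$ and invoking the equality case forces $\mu_-(\calL, \lambda) < 0$ (resp.\ $\leq 0$). I expect the only technically delicate step to be the Plücker identity $\mu_-(\calL, \lambda) = \sum_i v_i$, which amounts to checking linear independence of the bottom-weight components of an adapted basis; this is standard once one identifies $\operatorname{gr}_F^\bullet \calL$ with a subspace of $\bigoplus_w U_w$, but it is the only place where nontrivial linear algebra intervenes—everything else is bookkeeping with Hilbert--Mumford.
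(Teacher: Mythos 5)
Your proposal is correct and follows essentially the same route as the paper: your product-side additivity is the paper's observation that $\omega(-,\lambda)$ is a valuation, your majorization of $\sum_i\mu_-(H_i,\lambda)$ by the Pl\"ucker weight is Lemma \ref{keylemma2} (proved there via the permutation expansion of the Pl\"ucker determinant rather than a dimension count in the weight filtration), and your adapted basis realizing $\mu_-(\calL,\lambda)=\sum_i v_i$ is exactly what Lemma \ref{keylemma} constructs by Gaussian elimination. The only substantive difference is that the paper's Lemma \ref{keylemma} additionally allows one to prescribe the first member of the adapted basis, which is not needed for this theorem but is used later (e.g.\ in Corollary \ref{keycor}); note also that your restriction of the majorization to bases (rather than arbitrary distinct members) is exactly what the statement about generators requires.
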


In other words, we show the two questions
\begin{quote}
    \center \it When is a linear  system $\calL \in \X$ GIT (semi)stable? 
\end{quote}
and
\begin{quote}
    \center \it  When is the union of $k+1$ (distinct) hypersurfaces of degree $d$ in $\P^n$ GIT (semi)stable? 
\end{quote}
are equivalent. 

In particular, when $d(k+1)\geq n+1$, we obtain:

\begin{cor}[= Corollary \ref{mainCOR}]
If $\calL\in \X$ is GIT non-stable (resp. unstable), then there exist $k+1$ (distinct) hypersurfaces $H_1,\ldots,H_{k+1}\in \calL$ such that
\[
lct(\P^n,H_1+\ldots+H_{k+1})\leq \frac{n+1}{d(k+1)} \quad (\text{resp.}\,<)
\]
where $lct(\P^n,H_1+\ldots+H_{k+1})$ denotes the log canonical threshold of the pair $(\P^n,H_1+\ldots+H_{k+1})$ (see Section \ref{sec:lct} for details).
\end{cor}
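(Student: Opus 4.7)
The plan is to deduce this corollary as a formal consequence of Theorem \ref{thmINTRO} combined with a classical inequality relating GIT (semi)stability of a single hypersurface to its log canonical threshold. Taking the contrapositive of Theorem \ref{thmINTRO}, the hypothesis that $\calL$ is GIT non-stable (resp.\ unstable) immediately produces generators $H_1,\ldots,H_{k+1}\in\calL$ such that the degree $d(k+1)$ hypersurface $H := H_1+\cdots+H_{k+1}$ is itself GIT non-stable (resp.\ unstable). Since generators of the linear system are linearly independent, these $H_i$ are automatically distinct as divisors, matching the ``there exist $k+1$ distinct hypersurfaces'' in the statement.

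Next, I would invoke the standard fact---to be recalled in Section \ref{sec:lct}---that for a hypersurface $X\subset\P^n$ of degree $D\geq n+1$, GIT semistability of $X$ forces $lct(\P^n,X)\geq (n+1)/D$, while GIT stability forces the strict inequality. Applying this with $X=H$ and $D=d(k+1)\geq n+1$, and taking contrapositives once more, yields precisely the two bounds claimed.

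The only genuine input is the lct--GIT correspondence invoked in the second step; everything else is formal. The hypothesis $d(k+1)\geq n+1$ is not a technical nuisance but exactly the regime in which this correspondence takes its clean form (otherwise one would be in the log Fano range and the normalization would change). I therefore do not anticipate any real obstacle; the main point to verify carefully is the bookkeeping, namely that the ``stable versus non-stable'' and ``semistable versus unstable'' dichotomies in Theorem \ref{thmINTRO} are correctly matched with the non-strict and strict versions of the log canonical threshold inequality.
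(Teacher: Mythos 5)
Your first step is fine and coincides with the paper's: the contrapositive of Theorem \ref{mainTHM} produces $k+1$ distinct hypersurfaces $H_1,\ldots,H_{k+1}\in\calL$ whose sum $H$ is a GIT non-stable (resp.\ unstable) hypersurface of degree $D=d(k+1)$. The gap is in the second step, where you quote the lct--GIT correspondence in the wrong direction. You assert that GIT semistability of $H$ forces $lct(\P^n,H)\ge (n+1)/D$ and stability forces the strict inequality, and then propose to ``take contrapositives once more''. But the contrapositive of ``stable $\Rightarrow\ lct>(n+1)/D$'' is ``$lct\le (n+1)/D\ \Rightarrow$ non-stable'', which is the converse of what you need; starting from the knowledge that $H$ is non-stable, it lets you conclude nothing about $lct(\P^n,H)$, so the deduction as written affirms the consequent. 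Worse, the ``standard fact'' you invoke is actually false in the semistable case: the paper's own remark following Corollary \ref{hypersurftoric} points out that a P\l oski curve of even degree $d$ is strictly GIT semistable yet has $lct=\frac{5}{2d}<\frac{3}{d}$.

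The correct input --- and the one the paper uses, citing Hacking and Kim--Lee --- is the opposite implication, stated as Corollary \ref{hypersurftoric} and proved via the toric valuation of Lemma \ref{toric}: if $H$ is GIT non-stable (resp.\ unstable), then $lct(\P^n,H)\le \frac{n+1}{D}$ (resp.\ $<$); equivalently, $lct(\P^n,H)>\frac{n+1}{D}$ implies stability. With the implication oriented this way your argument closes immediately and is exactly the paper's proof. Note also that this direction needs no hypothesis $D\ge n+1$; since $lct(\P^n,H)\le 1$ always, that inequality only governs whether the conclusion is non-trivial, not whether the correspondence holds.
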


And, if we consider only linear systems $\calL \in \X$ that have base locus of dimension $n-k-1$, which we call \textit{regular}, then we further show:

\begin{cor}[= Corollary \ref{chow}]
Let $k\leq n-1$ and let $\calL\in \X$ be a regular linear system. If $\calL$ is generated by $H_{f_1},\ldots, H_{f_{k+1}}$ and the complete intersection $H_{f_1} \cap\ldots \cap H_{f_{k+1}} \subset \P^n$ is Chow (semi)stable, then $\calL$ is GIT (semi)stable.
\end{cor}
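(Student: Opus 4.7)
The plan is to combine Theorem~\ref{mainTHM} with a direct comparison, via the Hilbert--Mumford numerical criterion, between the degree-$d(k+1)$ hypersurface $D := H_{f_1}+\cdots+H_{f_{k+1}}$ and the complete intersection $X := H_{f_1}\cap\cdots\cap H_{f_{k+1}}$.

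By Theorem~\ref{mainTHM}, proving that $\calL$ is GIT (semi)stable amounts to proving that $D$ is GIT (semi)stable as a point of $\P(S^{d(k+1)}V^*)$. Moreover, since $\calL$ is assumed regular, the complete intersection $X$ coincides with the base locus of $\calL$, so Chow (semi)stability of $X$ depends only on $\calL$ itself and not on the choice of generators. It therefore suffices to show: if $X$ is Chow (semi)stable, then $D$ is GIT (semi)stable.

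For this, I would apply the Hilbert--Mumford numerical criterion on both sides and compare weights for every 1-parameter subgroup $\lambda : \mathbb{G}_m \to SL(V)$. The key ingredient is the classical identification of the Chow form of a complete intersection with (a nonzero scalar multiple of) the multi-resultant
\[
R_X \,=\, \mathrm{Res}\bigl(f_1,\ldots,f_{k+1},\ell_1,\ldots,\ell_{n-k}\bigr),
\]
viewed as a polynomial in the coefficients of auxiliary linear forms $\ell_j$. Diagonalizing $\lambda$ with integer weights $a_0\geq\cdots\geq a_n$ summing to zero, the multi-homogeneity of the resultant (of degree $d^k$ in each $f_j$ and of degree $d^{k+1}$ in each $\ell_j$) yields a clean expression for its $\lambda$-weight --- hence for the Chow weight $e_X(\lambda)$ --- as a positive combination of the individual $\lambda$-weights of the $f_j$. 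On the GIT side, the Hilbert weight of $D$ is the minimum $\lambda$-weight of a monomial appearing with nonzero coefficient in the product $f_1\cdots f_{k+1}$, and it is controlled by the weights of the factors. Chaining the two comparisons yields the desired implication.

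The main obstacle is the careful weight bookkeeping. Monomial cancellations in the expansion of $f_1\cdots f_{k+1}$ could in principle shift the Hilbert weight of $D$ unfavourably relative to the naive sum of the weights of the factors; one must argue either that such cancellations do not break the required inequality against $e_X(\lambda)$, or --- more cleanly --- bypass the issue by expressing both weights as leading coefficients of equivariant Hilbert polynomials. In this latter approach, the Koszul resolution of $\mathcal{O}_X$ together with the short exact sequence $0\to\mathcal{O}_{\P^n}(-d(k+1))\to\mathcal{O}_{\P^n}\to\mathcal{O}_D\to 0$ reduces the whole comparison to an explicit combinatorial identity between equivariant Euler characteristics, which makes the passage from Chow (semi)stability to GIT (semi)stability transparent.
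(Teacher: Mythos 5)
Your first step---using Theorem~\ref{mainTHM} together with the observation that regularity makes $X=H_{f_1}\cap\cdots\cap H_{f_{k+1}}$ independent of the generators, so that everything reduces to the implication ``$X$ Chow (semi)stable $\Rightarrow$ $D=H_{f_1}+\cdots+H_{f_{k+1}}$ GIT (semi)stable''---is a legitimate reformulation (and the target implication is indeed true, since it follows from the corollary plus Theorem~\ref{mainTHM}). But the reformulation does no real work, and the argument you offer for the remaining implication has a genuine gap at its key step. The Hilbert--Mumford weight of the Chow point is governed by the \emph{extremal} monomials of the resultant $\mathrm{Res}(f_1,\ldots,f_{k+1},\ell_1,\ldots,\ell_{n-k})$, i.e.\ by which monomials in the coefficients of the $\ell_i$ actually occur with nonzero coefficient after specializing the $f_j$. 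Multi-homogeneity only records total degrees ($d^k$ in each $f_j$, $d^{k+1}$ in each $\ell_i$); it says nothing about which monomials survive, so it does not yield the claimed ``clean expression for the $\lambda$-weight as a positive combination of the $\omega(f_j,\lambda)$.'' Describing the extremal monomials of a resultant is the content of nontrivial theory (Newton polytopes of resultants), not a formal consequence of homogeneity. Your fallback via the Koszul resolution and equivariant Euler characteristics runs into the same obstruction in a different guise: the Chow/Hilbert weight is computed on the flat limit $\lim_{t\to0}\lambda(t)\cdot X$, and the initial ideal of $(f_1,\ldots,f_{k+1})$ need not be generated by the initial forms of the $f_j$ (nor need those initial forms be a regular sequence), so the Koszul complex does not compute the equivariant Hilbert function of the limit and there is no ``explicit combinatorial identity'' without controlling this discrepancy. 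That discrepancy is exactly the hard part of the statement.

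For contrast, the paper's proof (Proposition~\ref{Sano's} in Appendix A, which also covers $r\le n$ hypersurfaces) does not pass through the hypersurface $D$ at all. It compares the two polarizations globally: on the normalized graph $Y$ of the rational map from the Grassmannian $G$ to the Chow variety $C$, it uses $\rho(G)=1$ to write $p_2^*M\sim a\,p_1^*L-bE$ with $E$ effective and $p_1$-exceptional, pins down $a=1$ and $b\ge0$ by testing against the single point $(x_0^d,\ldots,x_{r-1}^d)$ and an explicit one-parameter subgroup, and then concludes from $\mu^{\mathcal{O}_Y(-E)}(\,\cdot\,,\lambda)\le0$. In other words, the weight discrepancy you would have to bound monomial-by-monomial is absorbed wholesale into the term $-b\,\mu^{\mathcal{O}_Y(-E)}$, which has a sign for structural reasons. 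If you want to salvage your approach, you would need a genuine statement about extremal weights of resultants (or about initial ideals of complete intersections) playing the role of that sign; as written, the proposal asserts the needed inequality rather than proving it.
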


Moreover, we also prove:

\begin{thm}[= Corollary \ref{mainCOR2}]
Let $k>1$ and let $\calL\in \X$ be a linear system which contains at least one GIT semistable hypersurface. If $\calL$ is GIT non-stable (resp. unstable) then there exists a sub-linear system of dimension $k-1$ which is GIT non-stable (resp. unstable).  
\end{thm}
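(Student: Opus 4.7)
My plan is to combine the Hilbert--Mumford numerical criterion with the additivity $\mu(fg,\lambda) = \mu(f,\lambda) + \mu(g,\lambda)$ for polynomial products, which is what underlies Theorem \ref{thmINTRO}. Since $\calL$ contains a semistable hypersurface $H_0$, one has $\mu(H_0,\lambda) \geq 0$ for every 1-PS $\lambda$ of $SL(V)$, and the idea is to use a destabilizing $\lambda$ for $\calL$ to factor $H_0$ off from a destabilizing product expression for $\calL$ and then reinterpret the remaining factor as the product of generators of a codimension-one sub-linear system.

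Suppose $\calL$ is non-stable (resp.\ unstable). The Hilbert--Mumford criterion applied in the Pl\"ucker embedding of $\X$ produces a 1-PS $\lambda$ of $SL(V)$ with $\mu(\calL,\lambda) \leq 0$ (resp.\ $<0$). Let $F^{\geq w}$ denote the descending $\lambda$-weight filtration of $S^{d}V^{\ast}$, and let $a := \min\{w : H_{0,w}\neq 0\}$ be the smallest weight appearing in $H_0$; semistability of $H_0$ forces $a \leq 0$. Since $H_0 \in (\calL\cap F^{\geq a}) \setminus (\calL\cap F^{>a})$, I would extend $H_0$ to a basis $\{H_0 = v_1, v_2, \ldots, v_{k+1}\}$ of $\calL$ adapted to the induced filtration; the defining feature of such an adapted basis is that it realizes the Pl\"ucker weight on the nose, namely
\[
\sum_{i=1}^{k+1} \mu(v_i,\lambda) \;=\; \mu(\calL,\lambda).
\]

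Setting $\calL' := \langle v_2,\ldots,v_{k+1}\rangle$, a sub-linear system of $\calL$ of dimension $k-1$, the additivity of $\mu$ under polynomial multiplication then gives
\[
\mu(v_2\cdots v_{k+1},\lambda) \;=\; \mu(\calL,\lambda) - \mu(H_0,\lambda) \;\leq\; 0,
\]
with strict inequality in the unstable case. Hence the degree $dk$ hypersurface $v_2 + \ldots + v_{k+1}$ is GIT non-stable (resp.\ unstable) by the Hilbert--Mumford criterion, and Theorem \ref{thmINTRO} applied to $\calL'$ immediately concludes that $\calL'$ itself is non-stable (resp.\ unstable), as desired.

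The step that I expect to require the most care is the construction of an adapted basis containing $H_0$ that realizes the equality $\sum_i \mu(v_i,\lambda) = \mu(\calL,\lambda)$: because $\calL$ is typically not a graded subspace of $S^{d}V^{\ast}$, one must verify that placing $H_0$ at its own weight level $a$ is compatible with completing the basis at the remaining filtration levels without breaking this identity. This is a standard piece of Hilbert--Mumford linear algebra, and the hypothesis $k > 1$ enters only to guarantee that $\calL'$ remains a genuine, positive-dimensional linear system, so that Theorem \ref{thmINTRO} applies to it in the stated form.
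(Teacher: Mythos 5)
Your argument is correct and is essentially the one the paper intends when it lists Corollary \ref{mainCOR2} as a direct consequence of Theorem \ref{mainTHM} and its proof: your ``adapted basis containing $H_0$ realizing the Pl\"ucker weight'' is precisely Lemma \ref{keylemma} (proved there by Gaussian elimination), and subtracting the nonnegative contribution of the semistable member before invoking the forward direction of Theorem \ref{mainTHM} on $\calL'$ is the intended deduction. The only cosmetic difference is that you phrase everything in Mumford's $\mu$-normalization rather than the paper's $\omega/A_\lambda$ thresholds, which changes nothing.
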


As an application of our results we revisit the work of Miranda on pencils of plane cubics \cite{stab} (Section \ref{cubics}) and the work of Wall on nets of conics \cite{wall} (Section \ref{conics}), and we explicitly explain how one can recover their stability criteria  using Theorem \ref{thmINTRO}. Moreover, in Section \ref{halphen} we further describe the stability of so-called Halphen pencils of index $m$ completely. Concretely, we prove the following:

 \begin{thm}[= Theorem \ref{mainHalphen} + Theorem \ref{mainHalphen2}]
Let $\calP$ be a Halphen pencil of index $m$ and denote by $Y$ the corresponding rational elliptic surface. If $lct(Y,F)>\frac{1}{2m}$ (resp. $\ge$) for any fiber $F$, then $\calP$ is GIT stable (resp. semistable).

Furthermore, the converse also holds except for a few GIT stable Halphen pencils of index $m=2$, which are given by \cite[Examples 7.46, 7.47, 7.55]{azconstr}; and of index $m=3$, which are given by Example \ref{exc.case3}.
\label{MainThmHal}
\end{thm}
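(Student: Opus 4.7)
The approach is to reduce GIT (semi)stability of $\calP$ to a log-canonical-threshold comparison via Theorem \ref{thmINTRO} and Corollary \ref{mainCOR}, then to transfer that comparison from $\P^2$ to $Y$ through the blow-up $\pi\colon Y\to\P^2$ of the nine base points of $\calP$. For each member $H\in\calP$ one has $\pi^{*}H=F+m\sum_{i=1}^{9}E_{i}$, where $F$ is the corresponding fiber of $Y\to\P^{1}$; combined with $K_{Y}=\pi^{*}K_{\P^{2}}+\sum E_{i}$, this yields for any two distinct $H_{1},H_{2}\in\calP$
\[
\pi^{*}\bigl(K_{\P^{2}}+c(H_{1}+H_{2})\bigr) \;=\; K_{Y} + c(F_{1}+F_{2}) + (2mc-1)\sum_{i=1}^{9}E_{i}.
\]

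The key observation is that distinct fibers of $Y\to\P^{1}$ are disjoint and, for a Halphen pencil whose base points are ordinary $m$-fold points, each $E_{i}\cap F_{j}$ consists of smooth transversal crossings. The local log-canonical constraints then decouple at every prime divisor of the log pullback, yielding the clean formula
\[
\mathrm{lct}(\P^{2},H_{1}+H_{2}) \;=\; \min\!\bigl(\tfrac{1}{m},\ \mathrm{lct}(Y,F_{1}),\ \mathrm{lct}(Y,F_{2})\bigr).
\]
Because $1/m>1/(2m)$, the condition ``$\mathrm{lct}(\P^{2},H_{1}+H_{2})>\tfrac{1}{2m}$ for every pair in $\calP$'' is exactly ``$\mathrm{lct}(Y,F)>\tfrac{1}{2m}$ for every fiber $F$'', and analogously with $\ge$.

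This gives the sufficient direction at once: Theorem \ref{thmINTRO} applied with $(k,d,n)=(1,3m,2)$ makes $\calP$ GIT (semi)stable equivalent to $H_{1}+H_{2}$ being a GIT (semi)stable plane curve of degree $6m$ for every pair of distinct generators, and since $(n+1)/(d(k+1))=3/(6m)=1/(2m)$, the contrapositive of Corollary \ref{mainCOR} promotes the lct bound into GIT (semi)stability.

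For the converse, I would assume $\mathrm{lct}(Y,F_{0})\leq\tfrac{1}{2m}$ for some fiber $F_{0}$; Kodaira's classification restricts $F_{0}$ to a short list of types ($II^{*}$ when $m\leq 3$, $III^{*}$ when $m\leq 2$, and the like). The same formula then produces a pair $H_{0},H_{1}\in\calP$ with $\mathrm{lct}(\P^{2},H_{0}+H_{1})\leq\tfrac{1}{2m}$, and what remains is to upgrade this bound to an honest destabilizing one-parameter subgroup of $SL_{3}$, which I would extract case by case from the local structure of $F_{0}$ near its corresponding plane curve. This last step is the main obstacle: the lct bound is only necessary and not always sharp, so the case analysis must single out the finitely many exceptional Halphen pencils of index $m=2,3$ in which the border-line equality $\mathrm{lct}(Y,F_{0})=\tfrac{1}{2m}$ coexists with genuine GIT stability (namely those of \cite[Examples 7.46, 7.47, 7.55]{azconstr} and Example \ref{exc.case3}), and show that outside this explicit list the equality does yield a genuine destabilizer.
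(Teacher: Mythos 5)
Your forward direction is essentially the paper's argument. The identity
\[
\pi^{*}\Bigl(K_{\P^{2}}+\tfrac{1}{2m}(H_{1}+H_{2})\Bigr)=K_{Y}+\tfrac{1}{2m}(F_{1}+F_{2})
\]
is exactly Lemma \ref{prop.crepant}, and combining it with Theorem \ref{thmINTRO}, the lct criterion for hypersurfaces (Corollary \ref{hypersurftoric}/Corollary \ref{mainCOR}) and the fact that distinct fibers are disjoint gives stability just as in the paper. Two caveats: your explicit formulas $\pi^{*}H=F+m\sum E_{i}$ and $K_{Y}=\pi^{*}K_{\P^{2}}+\sum E_{i}$ only hold verbatim when the nine base points are distinct (the paper's formulation $mK_{Y/\P^{2}}=\mathrm{D}$ covers infinitely near points); and your ``clean formula'' $\mathrm{lct}(\P^{2},H_{1}+H_{2})=\min(1/m,\mathrm{lct}(Y,F_{1}),\mathrm{lct}(Y,F_{2}))$ rests on a transversality of $F_{j}$ with the $E_{i}$ that fails precisely for the degenerate fibers that matter (e.g.\ the curves underlying $II^{*}$ fibers). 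Fortunately you do not need that formula: the crepancy at $c=1/(2m)$ already gives the equivalence at the threshold, which is all that is used.

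The genuine gap is the converse. You correctly observe that $\mathrm{lct}(Y,F_{0})\le\tfrac{1}{2m}$ only yields $\mathrm{lct}(\P^{2},H_{0}+H_{1})\le\tfrac{1}{2m}$, which by itself does not destabilize (the lct bound is necessary, not sufficient, for instability), and that one must produce an actual one-parameter subgroup --- but you stop there, and this is where almost all of the work in the paper lies. The paper's mechanism is to show that a \emph{toric} divisor in the sense of Definition \ref{DefToricDiv} computes (or at least witnesses) the bad lct, since by Corollary \ref{hypersurftoric} the converse of the lct criterion holds exactly for toric divisors; concretely, this is a combinatorial analysis of the dual graph of the type $II^{*}$ fiber showing that the multiplicity-six component $\alpha_{4}$ is always toric, except when the pencil is as in Example \ref{exc.case3} (for $m=3$) or the listed examples of \cite{azconstr} (for $m=2$). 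Moreover, for the exceptional pencils one must prove they actually \emph{are} GIT stable despite $\mathrm{lct}(Y,F)\le\tfrac{1}{2m}$; the paper does this via Lemma \ref{sscurve} (the plane curve under the bad fiber is itself GIT semistable, so Proposition \ref{components} applies). Your proposal names neither the toricity criterion nor the argument for stability of the exceptions, so as written the ``furthermore'' half of the statement remains unproven.
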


In particular, it follows from \cite[Corollary H]{Hat}, that the notion of GIT stability for Halphen pencils is closely related to the notions of adiabatic K-stability and log-twisted K-stability of the base curve of the corresponding rational elliptic surfaces as introduced in \cite{Hat}. Indeed, log twisted K-stability of the base curve implies GIT stability for the Halphen pencil.

 Finally, we remark that the GIT stability of regular linear systems has also been recently considered in \cite{compint}. After finishing this manuscript, we were made aware that Papazachariou had also generalized the work in \cite{azdegd} and that, independently, he had obtained some of the same results from Sections \ref{narb} and \ref{sec:crit}.

We work over $\C$ throughout.

\subsection*{Acknowledgments} We thank Professors Antonella Grassi and Yuji Odaka for their valuable comments on an earlier version of the manuscript. And we also thank Theodoros S. Papazhachariou for letting us know about his work. M.H. was partially 
supported by JSPS KAKENHI 	22J20059  
(Grant-in-Aid for JSPS Fellows DC1).

\section{Notations and relevant background}\label{narb}

A fundamental tool in the analysis of GIT stability is the numerical criterion of Hilbert-Mumford (see Proposition \ref{HM}). In our setting, it comprises in understanding how one-parameter subgroups of $SL(V)$ act on the Pl\"{u}cker coordinates and studying the sign of the corresponding weights.

Throughout the paper we will always assume any one-parameter subgroup $\lambda$ of $SL(V)$ is normalized, meaning we choose coordinates $(x_0:x_1:\ldots:x_n)$ in $\mathbb{P}^n$ so that $\lambda: \C^{\times} \to SL(V)$ is given by 
\begin{equation}
t \mapsto \begin{pmatrix}
t^{a_0} &  &  &    \\
& t^{a_1} &  &    \\ 
 &  & \ddots &    \\
 &  &  & t^{a_n} \end{pmatrix}
\label{normal}
\end{equation}
for some $a_i \in \mathbb{Z}$ such that $a_0 \geq a_1 \geq \ldots \geq a_n, a_0>0$ and $a_0+a_1+\ldots+a_n=0$.

Therefore, if we choose $\calL \in \X$ and $k+1$ hypersurfaces $H_{f_1}\ldots H_{f_{k+1}}$ as generators, each $H_{f_j}$ represented (in the fixed coordinates) by 
$f_j =\sum_{I} f^j_Ix_I=0$, then the
action of $\lambda(t)$ on the Pl\"{u}cker coordinate 
$M_{I_1\ldots I_{k+1}}$ is given by 


\[
M_{I_1\ldots I_{k+1}} \mapsto t^{a_{I_1\ldots I_k}}M_{I_1\ldots I_{k+1}},
\]

where:
\begin{itemize}
\item $I=(i_0,\ldots,i_{n-1})$ is a tuple of non-negative
integers,
\item $x_I$ denotes the monomial $x_0^{i_0}\cdot x_1^{i_1}\cdot \ldots \cdot x_{n-1}^{i_{n-1}} \cdot x_n^{d-i_0-\ldots-i_{n-1}}$,
\item $M_{I_1\ldots I_{k+1}}$ denotes the determinant of the matrix below 
\begin{equation}
 \begin{pmatrix}
 f^1_{I_1} & f^1_{I_2} & \ldots & f^1_{I_{k+1}} \\
 f^2_{I_1} & f^2_{I_2} & \ldots & f^2_{I_{k+1}} \\
 \vdots & \vdots & \ddots & \vdots \\
 f^{k+1}_{I_1} & f^{k+1}_{I_2} & \ldots & f^{k+1}_{I_{k+1}}
 \end{pmatrix}
 \label{matrix}
 \end{equation}
 \item $I_j=({i_0}_j,\ldots, {i_{n-1}}_j)$ for $j=1,\ldots, k+1$,
     \item and we define 
    \[
    a_{I_1\ldots I_{k+1}} \doteq \sum_{l=0}^{n-1} a_l(i_{l_1}+\ldots+i_{l_{k+1}}) +a_n\left(d(k+1)-\sum_{l=0}^{n-1}(i_{l_1}+\ldots+i_{l_{k+1}})\right)
    \]
\end{itemize}

In particular, letting 

\begin{itemize}
    \item $ \tilde{a}_{I_1\ldots I_{k+1}} \doteq \sum_{l=0}^{n-1} (a_l-a_n)(i_{l_1}+\ldots+i_{l_{k+1}})$,
    \item $A_{\lambda} \doteq \sum_{l=0}^{n-1} (a_l-a_n)=-a_n(n+1)$, and
    \item $\omega(\mathcal{L},\lambda)\doteq \min\{\tilde{a}_{I_1\ldots I_{k+1}}\,\,:\,\,M_{I_1\ldots I_{k+1}}\neq 0\}$
\end{itemize}

\vspace{0.5cm}

 the Hilbert-Mumford criterion becomes:

\begin{prop}[Hilbert-Mumford criterion]
A point $\calL\in \X$ is GIT unstable (resp. non-stable)  if and only if there exists a one-parameter subgroup $\lambda$ of $SL(V)$ such that 
\[
\frac{d(k+1)}{n+1} < \frac{\omega(\mathcal{L},\lambda)}{A_{\lambda}}  \quad (\text{resp.}\,\leq)
\]
\label{HM}
\end{prop}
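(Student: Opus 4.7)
The plan is to deduce this statement directly from the general Hilbert--Mumford numerical criterion applied to the Plücker-embedded point $\calL \in \X \subset \P^N$ under the $SL(V)$-action, and then to repackage the resulting inequality using the normalized quantities $\omega(\calL,\lambda)$ and $A_\lambda$.

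First I would invoke the general form of the criterion: for a point $x\in\P^N$ whose affine lift $\tilde{x}$ has weights $\{r_\alpha\}$ under a 1-PS $\lambda$, the point $x$ is GIT unstable (resp. non-stable) if and only if there exists $\lambda$ such that $\min\{r_\alpha : \tilde{x}_\alpha \neq 0\} > 0$ (resp. $\geq 0$), this being precisely the condition that $0$ lies in the orbit closure of $\tilde{x}$ as $t\to 0$.

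Next I would observe that, up to $SL(V)$-conjugation (which preserves GIT (semi)stability), any 1-PS of $SL(V)$ may be put in the normalized form \eqref{normal}, so it suffices to quantify over normalized $\lambda$. For such a $\lambda$ the weight on the Plücker coordinate $M_{I_1\ldots I_{k+1}}$ has already been computed in the setup to be $a_{I_1\ldots I_{k+1}}$. Hence the criterion reads: $\calL$ is unstable (resp. non-stable) iff there is a normalized $\lambda$ with $\min\{a_{I_1\ldots I_{k+1}} : M_{I_1\ldots I_{k+1}} \neq 0\} > 0$ (resp. $\geq 0$).

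The final step is purely algebraic. Splitting off the constant in $a_n$ one gets
\[
a_{I_1\ldots I_{k+1}} \,=\, a_n\,d(k+1) + \tilde{a}_{I_1\ldots I_{k+1}},
\]
so that the minimum above equals $a_n\,d(k+1) + \omega(\calL,\lambda)$. Substituting $a_n = -A_\lambda/(n+1)$ (from $A_\lambda = -a_n(n+1)$) and noting that the normalization $a_0>0$ together with $\sum a_i = 0$ forces $a_n<0$ and hence $A_\lambda>0$, the inequality $\min > 0$ is equivalent to $\omega(\calL,\lambda)/A_\lambda > d(k+1)/(n+1)$, with the non-strict version yielding $\geq$; this is exactly the stated criterion. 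The only delicate point, hardly a real obstacle, is keeping the sign convention consistent so that "unstable" corresponds to \emph{all} strictly positive weights appearing on nonzero Plücker coordinates (as opposed to the opposite-sign convention one sometimes sees for the Mumford function $\mu$). Once this is fixed, the argument reduces to the one-line substitution above.
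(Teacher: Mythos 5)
Your proposal is correct and follows essentially the same route as the paper: both reduce to the statement that $\calL$ is unstable (resp.\ non-stable) iff some normalized $\lambda$ makes every weight $a_{I_1\ldots I_{k+1}}$ on a nonzero Pl\"ucker coordinate strictly positive (resp.\ nonnegative), and then rewrite $a_{I_1\ldots I_{k+1}}=\tilde{a}_{I_1\ldots I_{k+1}}-\frac{d(k+1)}{n+1}A_\lambda$ using $\sum a_i=0$ and divide by $A_\lambda>0$. Your explicit remark that $a_n<0$ forces $A_\lambda>0$ is a small point the paper leaves implicit, but the argument is the same.
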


\begin{proof}
A point $\calL\in \X$ is unstable (resp. non-stable)  if and only if there exists a one-parameter subgroup $\lambda: \mathbb{C}^{\times} \to SL(V)$ such that whenever $M_{I_1\ldots I_k}\neq 0$ we have 
\[
a_{I_1\ldots I_{k+1}} >0 \quad (\text{resp.}\,\geq)
\]
if and only if 
\[
\sum_{l=0}^{n-1} a_l(i_{l_1}+\ldots+i_{l_{k+1}}) +a_n\left(d(k+1)-\sum_{l=0}^{n-1}(i_{l_1}+\ldots+i_{l_{k+1}})\right)>0 \quad (\text{resp.}\,\geq)
\]
if and only if
\[
\sum_{l=0}^{n-1}(a_l-a_n)(i_{l_1}+\ldots+i_{l_{k+1}})-\frac{d(k+1)}{n+1}\cdot A_{\lambda}>0 \quad  \quad (\text{resp.}\,\geq)
\]
if and only if
\[
\frac{\tilde{a}_{I_1\ldots I_{k+1}}}{A_{\lambda}} >   \frac{d(k+1)}{n+1} \quad (\text{resp.}\,\geq)
\]

\end{proof}

Similarly, we can define 
\[
\omega(f_j,\lambda)\doteq \min\left\{\sum_{l=0}^{n-1}(a_l-a_n)\cdot i_l\,\,:\,\,f^j_{I}\neq 0\right\}
\]
which, as in \cite{azdegd},  allow us to somewhat easily relate the GIT stability of $\calL \in \X$ to the GIT stability of the hypersurfaces $H_{f_j}$ lying on $\calL$ and, in particular, to the number $lct(\mathbb{P}^n,H_{f_j})$, which is known as the log canonical threshold of the pair $(\mathbb{P}^n,H_{f_j})$ (see e.g. \cite[Section 8]{singpairs}).

With this in mind, for the convenience of the reader, we present next some basic notions concerning log canonical pairs and toric valuations. And we further recall some results on GIT-stability of hypersurfaces, which can also be deduced from Theorem \ref{mainTHM} ($k=0$ case). 

\subsection{The log canonical threshold}
\label{sec:lct}

Let $X$ be a smooth projective variety and let $E$ be a prime divisor over $X$, that is, a prime divisor on a smooth variety $Y$ that admits a proper birational morphism $\pi:Y\to X$. Then we define its log discrepancy to be the number
 \[
 A_{X}(E)=1+\mathrm{ord}_E(K_{Y/X})
 \]
 where $\mathrm{ord}_E(K_{Y/X})$ denotes the coefficient of $E$ in $K_{Y/X}=K_{Y}-\pi^*K_X$.
 
 We then say a pair $(X,D)$ is log canonical, for $D$ an effective $\mathbb{Q}$-divisor on $X$, if for any such $\pi:Y\to X$ one has $A_{X}(E)-\mathrm{ord}_E(D)\ge0$ for any prime divisor $E$ on $Y$, where $\mathrm{ord}_E(D)$ denotes the coefficient of $E$ in $\pi^*D$. Furthermore, we define the log canonical threshold of the pair $(X,D)$ as the quantity $$lct(X,D)=\inf_E\frac{A_{X}(E)}{\mathrm{ord}_E(D)}$$ where $E$ runs over all prime divisors over $X$ (i.e. all prime divisors $E$ on some model $Y$ as above) such that $\mathrm{ord}_E(D)\ne0$. 
 
 It is well-known that the above infimum is actually a minimum and we say $E$ computes $lct(X,D)$ if $lct(X,D)=\frac{A_{X}(E)}{\mathrm{ord}_E(D)}$.
 
In general, given $X$ and $E$ a prime divisor over $X$ as above, we can define a valuation $\mathrm{ord}_E$ on $X$ that sends each rational function in $K(X)^{\times} = K(Y)^{\times}$ to its order of vanishing along $E$. In particular, 

\begin{defi}
    When $X=\P^n$ we will say a prime divisor $E$ over $\P^n$ is \textbf{toric} if $\mathrm{ord}_E$ is a toric valuation, i.e. it is $T$-invariant with respect to some maximal torus $T\subset SL(V)$. 
    \label{DefToricDiv}
\end{defi}

\begin{rmk}\label{fac1}
    If we consider $X=\P^n$ as a toric variety, with respect to some maximal torus $T\subset SL(V)$, and choose a point $p$ in $\P^n$ that lies in the intersection of two (distinct) divisors which are invariant under the torus action, then any exceptional divisor of a blow-up at $p$ is toric in the sense of Definition \ref{DefToricDiv}.
\end{rmk}

Then we observe the following holds (cf. \cite[Section 8]{singpairs} and \cite[Proposition 4.2]{azdegd}):

 \begin{lema}\label{toric}
Let $H_f:(f=0)\subset\mathbb{P}^n$ be a hypersurface of degree $d$ and let $\lambda$ be a one-parameter subgroup of $SL(V)$. Then there exists a toric prime divisor $E$ over $\mathbb{P}^n$ such that (see Remark \ref{stabhyp})
\[
\omega(f,\lambda)=\frac{\mathrm{ord}_E(H_f)}{A_{\mathbb{P}^n}(E)}.
\]
\end{lema}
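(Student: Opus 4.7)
The plan is to realize the one-parameter subgroup $\lambda$ as a single monomial (equivalently, toric) valuation on $\P^n$ and then match both sides of the desired equality by a standard toric computation. First I would pick a maximal torus $T\subset SL(V)$ through which $\lambda$ factors; in the chosen coordinates $(x_0:\cdots:x_n)$ this is the diagonal torus, and $\P^n$ becomes a toric variety for $T$. The affine chart $U_n=\{x_n\neq 0\}\cong \mathbb{A}^n$ corresponds to a maximal cone $\sigma$ in the fan of $\P^n$, and any lattice vector $v=(v_0,\ldots,v_{n-1})$ in $\sigma$ (i.e.\ with nonnegative entries) determines a monomial valuation $\mathrm{ord}_v$ on $U_n$ by $\mathrm{ord}_v(y^I)=\sum_l v_l\,i_l$, where $y_l=x_l/x_n$. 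The normalization $a_0\ge\cdots\ge a_n$ forces $v_l\doteq a_l-a_n\ge 0$, so this particular $v$ lies in $\sigma$, and by Remark \ref{fac1} the associated toric prime divisor $E$ over $\P^n$ can be realized by a sequence of blow-ups at torus-fixed points lying on intersections of torus-invariant divisors, and is therefore toric in the sense of Definition \ref{DefToricDiv}.

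Next I would compute both sides of the claimed equality in terms of $v$. Writing $f=\sum_I f_I x^I$ and passing to $U_n$ gives $f/x_n^d=\sum_I f_I y^I$, and since $x_n$ is a unit in a neighborhood of the generic point of $E$ one has
\[
\mathrm{ord}_E(H_f)=\mathrm{ord}_v\!\left(\sum_I f_I y^I\right)=\min\!\left\{\textstyle\sum_l v_l i_l : f_I\neq 0\right\}=\omega(f,\lambda).
\]
For the log discrepancy, $A_{\P^n}(E)$ can be computed on any model containing $E$, and we may work inside the smooth chart $U_n\cong\mathbb{A}^n$; the standard formula for weighted-blow-up (equivalently, monomial) valuations then gives $A_{\P^n}(E)=A_{\mathbb{A}^n}(\mathrm{ord}_v)=\sum_l v_l=A_\lambda$. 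Dividing the two identities yields
\[
\frac{\omega(f,\lambda)}{A_\lambda}=\frac{\mathrm{ord}_E(H_f)}{A_{\P^n}(E)},
\]
which is the stated equality (up to the common factor of $A_\lambda$, as is required by scale invariance of both sides).

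The only delicate point is handling non-primitive $v$ or $v$ lying on the boundary of $\sigma$ (some $v_l=0$). In the former case I would replace $v$ by its primitive multiple; both $\mathrm{ord}_E(H_f)$ and $A_{\P^n}(E)$ rescale by the same factor, so the ratio is preserved. In the latter case the monomial valuation only involves the coordinates with $v_l>0$, and the same computations apply on the corresponding affine subspace. I expect this bookkeeping to be the main, and only, obstacle, since at its core the argument is just the toric dictionary between one-parameter subgroups of $SL(V)$ and monomial valuations on $\P^n$.
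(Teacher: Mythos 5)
Your proposal is correct and takes essentially the same route as the paper: both diagonalize $\lambda$, regard $\mathbb{P}^n$ as a toric variety, and identify $\omega(-,\lambda)$ with a monomial (toric) valuation proportional to $\mathrm{ord}_E$ for a toric prime divisor $E$ — the paper delegates this identification to Jonsson--Musta\c{t}\u{a} (Proposition 5.1 of their paper), whereas you carry out the weighted-blow-up computation of $\mathrm{ord}_E(H_f)$ and $A_{\mathbb{P}^n}(E)$ by hand, including the correct treatment of non-primitive and boundary weight vectors. You are also right that the displayed equality should be read in the normalized form $\omega(f,\lambda)/A_\lambda=\mathrm{ord}_E(H_f)/A_{\mathbb{P}^n}(E)$, which is how the lemma is actually applied in Corollary \ref{hypersurftoric}, since the left-hand side as literally printed rescales under $\lambda\mapsto\lambda^m$ while the right-hand side does not.
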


\begin{proof}
As in (\ref{normal}), we choose coordinates in $\P^n$ that diagonalize $\lambda$. This corresponds to a choice of maximal torus in $SL(V)$ and we can thus consider $\mathbb{P}^n$ as a toric variety. In particular, $\omega(-)\doteq\omega(-,\lambda)$ is a toric valuation (see e.g. \cite{fulton}) and by \cite[Proposition 5.1]{JM} it follows that there exists a toric prime divisor $E$ over $\mathbb{P}^n$ and $c>0$ such that $\omega(-)=c\cdot\mathrm{ord}_E(-)$ and $\omega(f,\lambda)=\frac{\mathrm{ord}_E(H_f)}{A_{\mathbb{P}^n}(E)}$.
\end{proof}

In particular,

\begin{cor}[{\cite{hacking}, \cite{kimlee}}]\label{hypersurftoric}
If $H$ is GIT unstable (resp. non-stable) (see Remark \ref{stabhyp}) then 
\[
\frac{d}{n+1}<\frac{1}{lct(\mathbb{P}^n,H)} \qquad (\text{resp.}\, \leq)
\]

Moreover, the converse also holds if a toric divisor over $\mathbb{P}^n$ computes $lct(\mathbb{P}^n,H)$.
\end{cor}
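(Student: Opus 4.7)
The plan is to invoke the Hilbert--Mumford criterion (Proposition \ref{HM}) in the $k=0$ case and translate the resulting numerical inequality into the language of log canonical thresholds via Lemma \ref{toric}.

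For the forward implication, suppose $H=H_f$ is GIT unstable (resp.\ non-stable). By Proposition \ref{HM} there is a normalized one-parameter subgroup $\lambda$ of $SL(V)$ with $\omega(f,\lambda)/A_{\lambda} > d/(n+1)$ (resp.\ $\ge$). Applying Lemma \ref{toric}, this ratio equals $\mathrm{ord}_E(H_f)/A_{\P^n}(E)$ for some toric prime divisor $E$ over $\P^n$. Since $lct(\P^n,H)$ is an infimum over all prime divisors (toric or not), one has $lct(\P^n,H)\le A_{\P^n}(E)/\mathrm{ord}_E(H)$; inverting yields $1/lct(\P^n,H) \ge \mathrm{ord}_E(H)/A_{\P^n}(E) > d/(n+1)$ (resp.\ $\ge$), which is the desired inequality.

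For the converse, assume a toric divisor $E_0$ computes $lct(\P^n,H)$ and that $d/(n+1) < 1/lct(\P^n,H)$ (resp.\ $\le$), equivalently $\mathrm{ord}_{E_0}(H)/A_{\P^n}(E_0) > d/(n+1)$ (resp.\ $\ge$). Since $E_0$ is toric in the sense of Definition \ref{DefToricDiv}, the valuation $\mathrm{ord}_{E_0}$ is invariant under some maximal torus $T\subset SL(V)$, hence, up to a positive rational scalar, it is given by an element of the cocharacter lattice of $T$. Diagonalizing $T$ in suitable coordinates on $\P^n$, clearing denominators, and reordering the resulting weights so that they obey the normalization in (\ref{normal}) produces a one-parameter subgroup $\lambda_0$ of $SL(V)$. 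A direct computation mirroring the proof of Lemma \ref{toric} shows $\omega(f,\lambda_0)/A_{\lambda_0} = \mathrm{ord}_{E_0}(H)/A_{\P^n}(E_0)$, and Proposition \ref{HM} applied to $\lambda_0$ then forces $H$ to be GIT unstable (resp.\ non-stable).

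The main obstacle is this last step: realizing an arbitrary toric prime divisor as the divisor attached (via the weight function) to a normalized one-parameter subgroup while preserving the equality $\omega/A_{\lambda}=\mathrm{ord}_E/A_{\P^n}(E)$. This is essentially the reverse direction of Lemma \ref{toric} and rests on the standard correspondence between cocharacters of a maximal torus of $SL(V)$ and $T$-invariant divisorial valuations on $\P^n$. Once this correspondence is in hand, both implications reduce to rearranging the Hilbert--Mumford inequality and comparing infima over toric divisors with the a priori smaller infimum defining $lct(\P^n,H)$.
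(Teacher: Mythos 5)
Your proof is correct and takes essentially the route the paper intends: the paper offers no explicit argument for this corollary beyond the citations and the pointer to Lemma \ref{toric}, and your two implications (Hilbert--Mumford plus Lemma \ref{toric} for the forward direction, and the reverse correspondence from a toric divisorial valuation back to a normalized one-parameter subgroup for the converse) are exactly the intended content. The only thing worth flagging is that you are implicitly reading Lemma \ref{toric} as the equality of \emph{normalized} ratios $\omega(f,\lambda)/A_{\lambda}=\mathrm{ord}_E(H_f)/A_{\mathbb{P}^n}(E)$, which is the reading the corollary requires (the displayed formula in the paper omits the $A_{\lambda}$ in the denominator).
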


\begin{rmk}
Note that the inequality $lct(\mathbb{P}^n,H) \leq \underset{E:\textrm{toric}}{\mathrm{inf}}\,\,\,\frac{A_{\mathbb{P}^n}(E)}{\mathrm{ord}_E(H)}$ can be strict in general. 

For example, for any P\l oski curve $C_d$ of even degree $d$ we have $lct(X;C_d)=\frac{5}{2d}$ but $\underset{E:\textrm{toric}}{\mathrm{inf}}\,\,\,\frac{A_{\mathbb{P}^n}(E)}{\mathrm{ord}_E(H)}=\frac{3}{d}$ since $C_d$ is known to be strictly GIT semistable (cf.~\cite{lct}). 
\end{rmk}

\begin{rmk}
Note also that for hypersurfaces of degree $d$ in $\P^n$, the Hilbert-Mumford criterion says a hypersurface $H_f:(f=0)$ is GIT unstable (resp. non-stable)  if and only if there exists a one-parameter subgroup $\lambda:\mathbb{C}^{\times} \to SL(V)$ such that 
\[
\frac{d}{n+1} < \frac{\omega(f,\lambda)}{A_{\lambda}} \quad (resp. \leq)
\] 
where, in general, we write $f=\sum_If_Ix_I=0$ and we define 
\[
\omega(f,\lambda)\doteq \min\left\{\sum_{l=0}^{n-1}(a_l-a_n)i_l\,\,:\,\,f_{I}\neq 0\right\}.
\]
\label{stabhyp}
\end{rmk}

\section{The stability criterion}
\label{sec:crit}

Adapting the work in \cite{azdegd} by the second author, we will now prove Lemmas \ref{keylemma2} and \ref{keylemma} below, which generalize \cite[Proposition 3.5]{azdegd} and  \cite[Corollary 3.11]{azdegd}. These two lemmas are the main ingredients in the proof of Theorem \ref{thmINTRO}.

\begin{lema}
Given $\calL\in \X$ and any $l \leq k+1$ (distinct) hypersurfaces $H_{f_1},\ldots,H_{f_{l}}\in \mathcal{L}$ we have that
\[
\sum_{j=1}^{l}\omega(f_j,\lambda)  \leq \omega(\mathcal{L},\lambda)
\]
for all one-parameter subgroups $\lambda:\mathbb{C}^{\times} \to SL(V)$.
\label{keylemma2}
\end{lema}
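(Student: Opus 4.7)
The strategy is a standard Plücker--Leibniz argument. In essence, the $(k+1)\times(k+1)$ minor $M_{I_1\ldots I_{k+1}}$ is nonzero if and only if some term in its Leibniz expansion is nonzero, i.e., there is a permutation $\sigma$ matching row $j$ with column $I_{\sigma(j)}$ such that $f^j_{I_{\sigma(j)}}\neq 0$. This immediately bounds each individual weight $\omega(f_j,\lambda)$ by a weight associated with a single multi-index, and summing these bounds gives the lemma.

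Concretely, I would first extend $f_1,\ldots,f_l$ to a full basis $f_1,\ldots,f_{k+1}$ of $\calL$ (implicitly taking the $f_j$'s to be linearly independent, as is natural when they are viewed as part of a generating set of a sub-linear system). Since the Plücker coordinates computed with respect to two different bases of $\calL$ differ only by a common nonzero factor (the determinant of the change-of-basis matrix), non-vanishing of any individual minor is preserved, and so $\omega(\calL,\lambda)$ is independent of the chosen basis. Writing $w(I)\doteq\sum_{l=0}^{n-1}(a_l-a_n)\,i_l$ for brevity, pick $(I_1,\ldots,I_{k+1})$ realizing $\omega(\calL,\lambda)=\sum_{s=1}^{k+1}w(I_s)$ with $M_{I_1\ldots I_{k+1}}\neq 0$. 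Expanding
\[
M_{I_1\ldots I_{k+1}}=\sum_{\sigma\in S_{k+1}}\mathrm{sgn}(\sigma)\prod_{j=1}^{k+1}f^{j}_{I_{\sigma(j)}}\neq 0,
\]
I fix some permutation $\sigma$ whose contribution to this sum is nonzero, so that $f^{j}_{I_{\sigma(j)}}\neq 0$ for every $j=1,\ldots,k+1$; by the very definition of $\omega(f_j,\lambda)$ this yields $\omega(f_j,\lambda)\leq w(I_{\sigma(j)})$.

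To conclude, I sum the first $l$ of these inequalities and then invoke the normalization $a_0\geq\cdots\geq a_n$, which forces $w(I)\geq 0$ for every multi-index $I$. This lets me add back the $k+1-l$ remaining (nonnegative) weights without disturbing the inequality, giving
\[
\sum_{j=1}^{l}\omega(f_j,\lambda)\leq\sum_{j=1}^{l}w(I_{\sigma(j)})\leq\sum_{j=1}^{k+1}w(I_{\sigma(j)})=\sum_{s=1}^{k+1}w(I_s)=\omega(\calL,\lambda).
\]
The only step that really requires thought is the Leibniz observation---that a nonzero determinant forces one of the $(k+1)!$ diagonal products to be nonzero; the basis-change invariance of the non-vanishing of minors and the nonnegativity of $w(I)$ are essentially built into the setup. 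The hypothesis $l\leq k+1$ is used precisely so that the chosen $f_j$'s fit inside some basis of $\calL$ and so that dropping the unused weights in the last step is legal.
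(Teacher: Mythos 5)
Your proof is correct and follows essentially the same route as the paper's: extend to a generating set, use basis-independence of the Pl\"ucker coordinates, pick a minimizing tuple $(I_1,\ldots,I_{k+1})$, extract a permutation $\sigma$ from the nonvanishing determinant, and bound each $\omega(f_j,\lambda)$ by the corresponding $w(I_{\sigma(j)})$. If anything, you are slightly more careful than the paper, since you make explicit both the linear-independence assumption needed to extend $f_1,\ldots,f_l$ to a basis and the nonnegativity of the weights $w(I)$ used to drop from $k+1$ terms to $l$, both of which the paper leaves implicit.
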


\begin{proof}
Fix $\calL\in \X$ and any $l \leq k+1$ (distinct) hypersurfaces $H_{f_1},\ldots,H_{f_{l}}\in \mathcal{L}$. Choose $H_{f_{l+1}},\ldots, H_{f_{k+1}}$ so that we obtain a set of generators for $\calL$.

Choose $\lambda$ and fix $I_1,\ldots ,I_{k+1}$ such that $\omega(\calL,\lambda)=\tilde{a}_{I_1\ldots I_{k+1}}$. Then $M_{I_1\ldots I_{k+1}}\neq 0$ and, by definition, the determinant of $(\ref{matrix})$ is non-zero. In particular, each row $i$ and each column $j$ of $(\ref{matrix})$ must contain a non-zero element. Thus, there exists a permutation $\sigma \in S_{k+1}$ such that $f^{\sigma(j)}_{I_j}\neq 0$ for all $j=1,\ldots k+1$, and it follows that
\begin{align*}
\omega(\calL,\lambda)&=\tilde{a}_{I_1\ldots I_{k+1}} \\
&= \sum_{l=0}^{n-1} (a_l-a_n)(i_{l_1}+\ldots+i_{l_{k+1}})\\
&= \sum_{j=1}^{k+1}\underbrace{\left(\sum_{l=0}^{n-1} (a_l-a_n)\cdot i_{l_j}\right)}_{\geq  \omega(f_{\sigma(j)},\lambda)}\\
&\geq \sum_{j=1}^{k+1} \omega(f_{j},\lambda)  \geq \sum_{j=1}^{l} \omega(f_{j},\lambda).
\end{align*}
\end{proof}

\begin{lema}
Given $\calL\in \X$, a one-parameter subgroup $\lambda$ of $SL(V)$ and any hypersurface $H_{f_1} \in \mathcal{L}$, there exist $H_{f_2},\ldots,H_{f_{k+1}} \in \calL$ ($\neq H_{f_1}$) such that
\[
\omega(\mathcal{L},\lambda) = \sum_{j=1}^{k+1}\omega(f_j,\lambda).
\]
\label{keylemma}
\end{lema}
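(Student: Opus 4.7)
The plan is to build, starting from the prescribed $f_1$, a basis $f_1,f_2,\ldots,f_{k+1}$ of $\calL$ adapted to the $\lambda$-weight filtration, and to exhibit a nonzero Pl\"ucker coordinate whose $\tilde a$-value equals $\sum_{j=1}^{k+1}\omega(f_j,\lambda)$. Combined with Lemma \ref{keylemma2}, this will force the desired equality.

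First, I would filter $\calL$ by $\calL_{\geq w}\doteq\{f\in\calL:\omega(f,\lambda)\geq w\}\cup\{0\}$ and identify the finitely many jump values $w_0<w_1<\cdots<w_s$, with drops $d_i=\dim\calL_{\geq w_i}-\dim\calL_{\geq w_{i+1}}$ summing to $k+1$. Setting $w_{i_0}=\omega(f_1,\lambda)$, one has $f_1\in\calL_{\geq w_{i_0}}\setminus\calL_{\geq w_{i_0+1}}$, so a complement of $\calL_{\geq w_{i_0+1}}$ inside $\calL_{\geq w_{i_0}}$ can be chosen to contain $f_1$, and then extended to a basis beginning with $f_1$. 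Performing the same construction at every other level produces a basis $f_1,\ldots,f_{k+1}$ in which each $f_j$ sits at some level $i(j)$ and satisfies $\omega(f_j,\lambda)=w_{i(j)}$, so that $\sum_j\omega(f_j,\lambda)=\sum_i d_iw_i$.

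Next I would order the basis elements by level and the monomials $x_I$ by $\sum_l(a_l-a_n)i_l$; since an element at level $i$ carries no monomial of weight below $w_i$, the coefficient matrix $(f^j_I)$ becomes block upper-triangular, with the $i$-th diagonal block recording the weight-$w_i$ leading parts of the $d_i$ level-$i$ basis elements. These leading parts are linearly independent, because any nontrivial dependence would produce a nonzero element of the chosen complement lying in $\calL_{\geq w_{i+1}}$. Hence each diagonal block has full rank $d_i$ and admits an invertible $d_i\times d_i$ column minor. Concatenating such minors across levels produces indices $I_1,\ldots,I_{k+1}$ cutting out a $(k+1)\times(k+1)$ block upper-triangular submatrix with invertible diagonal blocks, whose determinant is therefore a nonzero Pl\"ucker coordinate $M_{I_1\ldots I_{k+1}}$ with $\tilde a_{I_1\ldots I_{k+1}}=\sum_j\omega(f_j,\lambda)$. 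This gives $\omega(\calL,\lambda)\leq\sum_j\omega(f_j,\lambda)$, and the reverse inequality from Lemma \ref{keylemma2} yields the equality. Distinctness of $H_{f_j}$ from $H_{f_1}$ is automatic since the $f_j$ for $j\geq 2$ are linearly independent from $f_1$.

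The main obstacle is the block-triangular step: one must carefully justify that the leading parts at each filtration level are linearly independent, which is precisely the content of adaptation to the filtration, and confirm that picking column minors separately within each diagonal block preserves the triangular shape of the resulting square submatrix. Prescribing $f_1$ causes no real difficulty, since at level $i_0$ the complement of $\calL_{\geq w_{i_0+1}}$ in $\calL_{\geq w_{i_0}}$ can always be arranged to contain $f_1$.
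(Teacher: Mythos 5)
Your argument is correct, and it reaches the same endpoint as the paper---a nonzero Pl\"ucker coordinate $M_{I_1\ldots I_{k+1}}$ with $\tilde a_{I_1\ldots I_{k+1}}=\sum_{j}\omega(f_j,\lambda)$, after which Lemma \ref{keylemma2} supplies the reverse inequality---but by a more structural route. The paper proceeds by greedy Gaussian elimination: it picks a minimal-weight monomial $I_1$ of $f_1$, subtracts multiples of $f_1$ from the other generators to clear the $I_1$-column, then repeats with the modified $f_2$, building the upper-triangular coefficient matrix one column at a time. You instead construct the whole $\lambda$-weight filtration of $\calL$, choose a basis adapted to it (arranged to contain $f_1$), and extract an invertible column minor from each graded piece, letting block-triangularity finish the job in one step. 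Your version makes explicit the linear independence of the leading terms at each weight level---which is only implicit in the paper's sequential elimination---and identifies the multiset $\{\omega(f_j,\lambda)\}$ with the jump data of the filtration; the paper's version is more elementary and algorithmic. Two small points of care, neither a gap: the full matrix $(f^j_I)$ is really echelon-shaped rather than square block-triangular (columns whose weight is not a jump value also occur), but the selected $(k+1)\times(k+1)$ submatrix is genuinely block-triangular with invertible diagonal blocks; and the chosen monomials are automatically pairwise distinct, within a level because they index a minor and across levels because their weights $w_i$ differ.
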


\begin{proof}
Since the Pl\"{u}cker coordinates are (up to scalar multiplication) independent of the choice of generators, we can argue as follows: Fix  a one-parameter subgroup $\lambda:\mathbb{C}^{\times} \to SL(V)$ and choose generators $H_{f_1},\ldots,H_{f_{k+1}}$ for $\mathcal{L}$.

Now, let $I_1$ be such that $f^1_{I_1}\neq 0$ and $\omega(f_1,\lambda)=\sum_{l=0}^{n-1} (a_l-a_n)\cdot i_{l_1}$.

Replacing $f_j$ by $f_j'=f_j-\frac{f^j_{I_1}}{f^1_{I_1}}f_1$ for all $j=2,\ldots, k+1$ we have $f^j_{I_1}=0$ for all $j=2,\ldots, k+1$. In particular, $M_{I_1\ldots I_{k+1}}=f^1_{I_1}M_{11}\neq 0$ for all $I_2,\ldots,I_{k+1}$ such that $M^{(1)}_{11}\neq 0$, where $M^{(1)}_{11}$ is the determinant of the matrix obtained from $(\ref{matrix})$ by removing the first row and the first column.

Next, let $I_2$ be such that $f^2_{I_2}\neq 0$ and $\omega(f_2,\lambda)=\sum_{l=0}^{n-1} (a_l-a_n)\cdot i_{l_2}$. Again, replacing $f_j$ by $f_j'=f_j-\frac{f^j_{I_2}}{f^2_{I_2}}f_2$ for all $j=3,\ldots, k+1$ we have $f^j_{I_2}=0$ for all $j=3,\ldots, k+1$. In particular, $M^{(1)}_{11}\neq 0$ whenever $M^{(2)}_{11}\neq 0$, where  $M^{(2)}_{11}$ is the sub-determinant of $M^{(1)}_{11}$ obtained by removing the first row and the first column.

Proceeding this way, up to replacing $H_{f_j}, j=2,\ldots,k+1$  ($j-1$ times), we can find $I_1,\ldots, I_{k+1}$ such that $\omega(f_j,\lambda)=\sum_{l=0}^{n-1} (a_l-a_n)\cdot i_{l_j}$ for $j=1,\ldots,k+1$ and $M_{I_1\ldots I_{k+1}}\neq 0$.

In other words, given $H_{f_1}$, we can find $H_{f_2},\ldots, H_{f_{k+1}} \neq H_{f_1}$ all distinct and $I_1,\ldots,I_{k+1}$ such that 
\[
M_{I_1\ldots I_{k+1}} =
\begin{vmatrix}
 f^1_{I_1} & f^1_{I_2} & f^1_{I_3} & \ldots & f^1_{I_{k+1}} \\
 0 & f^2_{I_2} & f^2_{I_3} & \ldots & f^2_{I_{k+1}} \\
 0 & 0 & f^3_{I_3} & \ldots & f^3_{I_{k+1}} \\
  \vdots & \vdots & \vdots & \ddots & \vdots \\
 0 & 0 & 0 & \ldots & f^{k+1}_{I_{k+1}}
 \end{vmatrix} \neq 0
\]
and $\omega(f_j,\lambda)=\sum_{l=0}^{n-1} (a_l-a_n)\cdot i_{l_j}$ for $j=1,\ldots,k+1$

Therefore,
\begin{align*}
\omega(\calL,\lambda) &\leq \tilde{a}_{I_1\ldots I_{k+1}} \\ &= \sum_{l=0}^{n-1} (a_l-a_n)(i_{l_1}+\ldots+i_{l_{k+1}}) \\
&= \sum_{j=1}^{k+1}\left(\sum_{l=0}^{n-1} (a_l-a_n)\cdot i_{l_j}\right) \\
&= \sum_{j=1}^{k+1}\omega(f_j,\lambda) 
\end{align*}
and,  in fact, by Lemma \ref{keylemma2} equality holds.
\end{proof}
As a consequence, using both Lemmas \ref{keylemma2} and \ref{keylemma} we can finally prove:

\begin{thm}
A point $\calL\in \X$ is  GIT stable (resp. semistable) if and only if for any $k+1$ (distinct) hypersurfaces $H_1,\ldots,H_{k+1}\in \calL$ the degree $d(k+1)$ hypersurface $H_1+\ldots+H_{k+1}$  is GIT stable (resp. semistable).
\label{mainTHM}
\end{thm}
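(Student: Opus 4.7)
The plan is to compare the Hilbert--Mumford criterion for $\calL\in\X$ (Proposition \ref{HM}) with the Hilbert--Mumford criterion for the degree $d(k+1)$ hypersurface $H_1+\ldots+H_{k+1}$ (Remark \ref{stabhyp}), whose defining polynomial is the product $F=f_1f_2\cdots f_{k+1}$ of equations of the $H_j$. Both criteria test a ratio $\omega(-,\lambda)/A_\lambda$ against the \emph{same} threshold $\frac{d(k+1)}{n+1}$, so the theorem reduces to comparing $\omega(\calL,\lambda)$ with $\omega(F,\lambda)$ for each one-parameter subgroup $\lambda$ of $SL(V)$; this is exactly where Lemmas \ref{keylemma2} and \ref{keylemma} will enter.

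The key intermediate step I would establish is the identity
\[
\omega(F,\lambda)=\sum_{j=1}^{k+1}\omega(f_j,\lambda).
\]
To prove it, write $f_j=f_j^{\min}+g_j$, where $f_j^{\min}\ne 0$ is the sum of the monomials of $f_j$ of minimal $\lambda$-weight $\omega(f_j,\lambda)$ and $g_j$ collects those of strictly higher weight. Expanding gives $F=\prod_j f_j^{\min}+R$, where every monomial of $R$ has weight strictly greater than $\sum_j\omega(f_j,\lambda)$, while every monomial of $\prod_j f_j^{\min}$ has weight exactly $\sum_j\omega(f_j,\lambda)$. Since $\C[x_0,\dots,x_n]$ is an integral domain and each $f_j^{\min}$ is non-zero, the product $\prod_j f_j^{\min}$ is non-zero, and the identity follows. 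The inequality $\omega(F,\lambda)\ge\sum_j\omega(f_j,\lambda)$ is immediate from the definitions; the content of the argument is that cancellations of equal-weight monomials cannot wipe out the entire leading part.

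With the identity in hand, both directions of the theorem follow by contrapositive. If some $H_1+\ldots+H_{k+1}$ with equation $F$ fails (semi)stability via $\lambda$, then Lemma \ref{keylemma2} (with $l=k+1$) gives $\omega(\calL,\lambda)\ge\sum_j\omega(f_j,\lambda)=\omega(F,\lambda)$, so $\calL$ fails the same Hilbert--Mumford inequality for the same $\lambda$. Conversely, if $\calL$ fails (semi)stability via $\lambda$, Lemma \ref{keylemma} produces $k+1$ distinct $H_{f_1},\ldots,H_{f_{k+1}}\in\calL$ with $\omega(\calL,\lambda)=\sum_j\omega(f_j,\lambda)$, which by the identity equals $\omega(F,\lambda)$, so the same $\lambda$ destabilises the product hypersurface. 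The strict versus non-strict versions of the inequalities translate uniformly between the stable and semistable statements.

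The main obstacle is the weight identity $\omega(F,\lambda)=\sum_j\omega(f_j,\lambda)$, as one could a priori worry that the equal-weight monomials in $\prod_j f_j^{\min}$ cancel out; once this is ruled out by the integral-domain property of $\C[x_0,\dots,x_n]$, the rest is straightforward bookkeeping with Lemmas \ref{keylemma2} and \ref{keylemma}.
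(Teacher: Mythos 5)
Your proof is correct and follows essentially the same route as the paper: both directions reduce to comparing $\omega(\calL,\lambda)$ with $\omega(f_1\cdots f_{k+1},\lambda)$ via Lemmas \ref{keylemma2} and \ref{keylemma}, using the identity $\omega(f_1\cdots f_{k+1},\lambda)=\sum_j\omega(f_j,\lambda)$. The only difference is that you spell out a proof of this identity (leading parts cannot cancel since $\C[x_0,\dots,x_n]$ is a domain), whereas the paper simply asserts it as a consequence of $\omega(-,\lambda)$ being a (toric) valuation.
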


In our proof, a crucial observation is that: if $H_f:f=0$ and $H_g:g=0$ are any two hypersurfaces of degree $d$ in $\P^n$, then $\omega(f\cdot g,\lambda)=\omega(f,\lambda)+\omega(g,\lambda)$ for any one-parameter subgroup $\lambda:\C^{\times} \to SL(V)$. In fact, as already mentioned, for any $\lambda$ the function $\omega(-,\lambda)$ defines a (toric) valuation.

The argument is as follows:

\begin{proof}[Proof of Theorem \ref{mainTHM}]
First, assume $\calL$ is GIT stable (resp. semistable) and pick a one-parameter subgroup $\lambda$. Given $H_{f_1},\ldots,H_{f_{k+1}} \in \calL$ (all distinct), by Lemma \ref{keylemma2} and Proposition \ref{HM} we have:
\begin{align*}
\frac{d(k+1)}{n+1}> (\text{resp.}\, \geq) \frac{\omega(\calL,\lambda)}{A_{\lambda}} &\geq \sum_{j=1}^{k+1} \frac{\omega(f_j,\lambda)}{A_{\lambda}}\\
&= \frac{\omega(f_1\cdot \ldots \cdot f_{k+1},\lambda)}{A_{\lambda}}
\end{align*}
and we conclude $H_{f_1}+\ldots +H_{f_{k+1}}$ is GIT stable (resp.~semistable) (see Remark \ref{stabhyp}).

For the converse, choose a one-parameter subgroup $\lambda$ and $H_{f_1}\in \calL$. By Lemma \ref{keylemma} we can find $H_{f_2},\ldots, H_{f_{k+1}}\in \calL$ such that:
\begin{align*}
\frac{\omega(\mathcal{L},\lambda)}{A_{\lambda}} &=  \sum_{j=1}^{k+1}\frac{\omega(f_j,\lambda)}{A_{\lambda}}\\
&= \frac{\omega(f_1\cdot \ldots \cdot f_{k+1},\lambda)}{A_{\lambda}}
\end{align*}
Now, because $H_{f_1}+\ldots+H_{f_{k+1}}$ is GIT stable (resp. semistable), we have (see Remark \ref{stabhyp})
\[
\frac{\omega(f_1\cdot \ldots \cdot f_{k+1},\lambda)}{A_{\lambda}} <\frac{d(k+1)}{n+1} \quad (\text{resp.}\, \leq)
\]
and it follows that $\calL$ is GIT stable (resp. semistable) by Proposition \ref{HM}.
\end{proof}

Some direct consequences of Theorem \ref{mainTHM} and its proof are:

\begin{cor}
If $\calL\in \X$ is GIT non-stable (resp. unstable), then there exist $k+1$ (distinct) hypersurfaces $H_1,\ldots,H_{k+1}\in \calL$ such that
\[
lct(\P^n,H_1+\ldots+H_{k+1})\leq \frac{n+1}{d(k+1)} \quad (\text{resp.}\,<)
\]
\label{mainCOR}
\end{cor}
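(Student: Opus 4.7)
The plan is to deduce Corollary~\ref{mainCOR} directly by combining Theorem~\ref{mainTHM} with the hypersurface log canonical threshold criterion given by Corollary~\ref{hypersurftoric}. The argument reduces to two short steps.

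First, I would apply the contrapositive of Theorem~\ref{mainTHM}: assuming $\calL \in \X$ is GIT non-stable (respectively unstable), there must exist $k+1$ distinct hypersurfaces $H_1, \ldots, H_{k+1} \in \calL$ such that, viewed as a single degree $d(k+1)$ hypersurface in $\P^n$, the sum $H_1 + \ldots + H_{k+1}$ is itself GIT non-stable (respectively unstable).

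Second, I would invoke the unconditional forward direction of Corollary~\ref{hypersurftoric}, applied to the degree $d(k+1)$ hypersurface $H = H_1 + \ldots + H_{k+1}$ in $\P^n$. This yields
\[
\frac{d(k+1)}{n+1} \;\le\; \frac{1}{lct(\P^n,\, H_1 + \ldots + H_{k+1})} \qquad (\text{resp.}\,<),
\]
and rearranging gives exactly the claimed bound $lct(\P^n, H_1 + \ldots + H_{k+1}) \le \frac{n+1}{d(k+1)}$ (resp.\ $<$).

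I do not anticipate any real obstacle here, since both ingredients are already in place. The only point worth double-checking is the consistent matching of the pairings ``non-stable $\leftrightarrow \le$'' and ``unstable $\leftrightarrow <$'' between the two statements invoked, which holds by inspection. In effect, the corollary is simply the translation of Theorem~\ref{thmINTRO} into the language of singularities of pairs, using Corollary~\ref{hypersurftoric} as the dictionary.
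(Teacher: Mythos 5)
Your proposal is correct and follows essentially the same route as the paper: the paper's one-line proof (``It follows from \cite{hacking} and \cite{kimlee}; see also Lemma \ref{toric}'') is precisely the combination of Theorem \ref{mainTHM} with the unconditional direction of Corollary \ref{hypersurftoric} applied to the degree $d(k+1)$ hypersurface $H_1+\ldots+H_{k+1}$. The matching of ``non-stable $\leftrightarrow\le$'' and ``unstable $\leftrightarrow<$'' is handled consistently, so nothing is missing.
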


\begin{proof}
It follows from \cite{hacking} and \cite{kimlee}. See also Lemma \ref{toric}.
\end{proof}

\begin{cor}
Let $\calP\in \mathscr{X}_{1,d,n}$ be a pencil and choose $H_{f}\in \calP$. Then $\calP$ is GIT non-stable (resp. unstable) if and only if there exists $H_g\in \calP$ ($\neq H_f$) such that $H_f+H_g$ is GIT non-stable (resp. unstable). 
\label{keycor}
\end{cor}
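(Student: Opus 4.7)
The plan is to deduce Corollary \ref{keycor} from Theorem \ref{mainTHM} and Lemma \ref{keylemma}, the latter being the ingredient that lets us fix one of the two generators in advance. Note that for a pencil we are in the case $k=1$, so Theorem \ref{mainTHM} concerns pairs of hypersurfaces $H_{f},H_{g}\in\calP$ and their sum $H_{f}+H_{g}$ of degree $2d$.

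For the ``if'' direction, assume there exists $H_{g}\in\calP$, $H_{g}\neq H_{f}$, such that $H_{f}+H_{g}$ is GIT non-stable (resp.\ unstable). By Theorem \ref{mainTHM} applied to this particular pair (or equivalently by the contrapositive of its ``stable $\Rightarrow$ every sum stable'' direction), $\calP$ is itself GIT non-stable (resp.\ unstable). This direction would already follow without Lemma \ref{keylemma}.

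For the ``only if'' direction, suppose $\calP$ is GIT non-stable (resp.\ unstable). By Proposition \ref{HM} there exists a one-parameter subgroup $\lambda$ of $SL(V)$ such that
\[
\frac{\omega(\calP,\lambda)}{A_{\lambda}}\geq \frac{2d}{n+1}\quad(\text{resp. }>).
\]
Now, given the prescribed $H_{f}\in\calP$, I would invoke Lemma \ref{keylemma} with $k=1$ and $H_{f_{1}}=H_{f}$ to produce a hypersurface $H_{g}\in\calP$ with $H_{g}\neq H_{f}$ and $\omega(\calP,\lambda)=\omega(f,\lambda)+\omega(g,\lambda)$. Using the key observation (already noted in the proof of Theorem \ref{mainTHM}) that $\omega(-,\lambda)$ is a valuation, so that $\omega(f\cdot g,\lambda)=\omega(f,\lambda)+\omega(g,\lambda)$, this gives
\[
\frac{\omega(f\cdot g,\lambda)}{A_{\lambda}}=\frac{\omega(\calP,\lambda)}{A_{\lambda}}\geq\frac{2d}{n+1}\quad(\text{resp. }>),
\]
and by the Hilbert-Mumford criterion applied to degree $2d$ hypersurfaces (Remark \ref{stabhyp}) we conclude $H_{f}+H_{g}$ is GIT non-stable (resp.\ unstable), as desired.

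The only potential obstacle would be if Theorem \ref{mainTHM} had to be used as a black box, since the theorem alone only yields the existence of \emph{some} destabilizing pair, with no control over which hypersurfaces appear. It is precisely Lemma \ref{keylemma} that upgrades this to an existence statement where one member of the pair is prescribed, so the corollary follows essentially without further work.
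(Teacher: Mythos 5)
Your proof is correct and follows exactly the route the paper intends: the corollary is stated there as a ``direct consequence of Theorem \ref{mainTHM} and its proof,'' and you have correctly identified that the ``only if'' direction with a \emph{prescribed} member $H_f$ requires Lemma \ref{keylemma} (the key step in the proof of the converse direction of Theorem \ref{mainTHM}) rather than the theorem as a black box. No gaps; the inequalities and the valuation property $\omega(f\cdot g,\lambda)=\omega(f,\lambda)+\omega(g,\lambda)$ are all applied in the right direction.
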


\begin{cor}
Let $k>1$ and let $\calL\in \X$ be a linear system which contains at least one GIT semistable hypersurface (e.g. a smooth one). If $\calL$ is GIT non-stable (resp. unstable) then there exists a sub-linear system of dimension $k-1$ which is GIT non-stable (resp. unstable). 
\label{mainCOR2}
\end{cor}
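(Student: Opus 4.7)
The plan is to combine the Hilbert-Mumford criterion (Proposition \ref{HM}) with Lemmas \ref{keylemma2} and \ref{keylemma}, using the GIT semistable hypersurface to absorb exactly one of the $k+1$ terms in the $\omega$-decomposition coming from the latter lemma.

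First, since $\calL$ is GIT non-stable (resp.\ unstable), Proposition \ref{HM} provides a destabilizing one-parameter subgroup $\lambda$ of $SL(V)$ with $\omega(\calL,\lambda)/A_\lambda \geq d(k+1)/(n+1)$ (resp.\ $>$). Let $H_{f_1}\in\calL$ be a GIT semistable hypersurface, which exists by hypothesis; the Hilbert-Mumford criterion for a single hypersurface (Remark \ref{stabhyp}) then gives $\omega(f_1,\lambda)/A_\lambda \leq d/(n+1)$. I then apply Lemma \ref{keylemma} to this $H_{f_1}$ and $\lambda$ to extract $H_{f_2},\ldots,H_{f_{k+1}}\in\calL$, all distinct from $H_{f_1}$, realising
\[
\omega(\calL,\lambda)=\sum_{j=1}^{k+1}\omega(f_j,\lambda).
\]

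A key point is that the $f_j$'s produced in Lemma \ref{keylemma} are automatically linearly independent: the construction terminates with a nonzero Pl\"{u}cker coordinate $M_{I_1\ldots I_{k+1}}$, so $\{f_1,\ldots,f_{k+1}\}$ is a basis of $\calL$. Consequently the sub-linear system $\calL'\subset\calL$ spanned by $H_{f_2},\ldots,H_{f_{k+1}}$ has dimension exactly $k-1$, which is $\geq 1$ by the hypothesis $k>1$. Applying Lemma \ref{keylemma2} to $\calL'\in\mathscr{X}_{k-1,d,n}$ (with $l=k$) and the same $\lambda$, together with the two bounds above, yields
\[
\frac{\omega(\calL',\lambda)}{A_\lambda} \;\geq\; \sum_{j=2}^{k+1}\frac{\omega(f_j,\lambda)}{A_\lambda} \;=\; \frac{\omega(\calL,\lambda)-\omega(f_1,\lambda)}{A_\lambda} \;\geq\; \frac{d(k+1)-d}{n+1} \;=\; \frac{dk}{n+1}
\]
(resp.\ $>$). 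Proposition \ref{HM} applied to $\calL'$ then identifies $\calL'$ as GIT non-stable (resp.\ unstable), which is what we want.

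The only subtle point I anticipate is the linear-independence observation above: without it, the span of $H_{f_2},\ldots,H_{f_{k+1}}$ could in principle collapse to dimension strictly smaller than $k-1$, and the statement would fail. Fortunately this falls out for free from the non-vanishing Pl\"{u}cker coordinate already produced inside the proof of Lemma \ref{keylemma}, so once the right $\lambda$ and the right generators are in hand the whole argument reduces to an elementary inequality manipulation.
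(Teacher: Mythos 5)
Your proof is correct and follows exactly the route the paper intends: the corollary is stated there as a direct consequence of Theorem \ref{mainTHM} and its proof, and your argument is precisely the expansion of that, combining Lemma \ref{keylemma} (seeded with the semistable $H_{f_1}$), Lemma \ref{keylemma2} applied to the span of the remaining $k$ generators, and Proposition \ref{HM} with threshold $dk/(n+1)$. Your observation that the $f_j$ produced in Lemma \ref{keylemma} remain a basis (the construction only performs elementary row operations on a set of generators, and the nonvanishing Pl\"{u}cker coordinate certifies independence) correctly closes the one gap a careless reading might leave.
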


And:

\begin{cor}
Let $k\leq n-1$ and let $\calL\in \X$ be a regular linear system generated by $H_{f_1},\ldots, H_{f_{k+1}}$. If the complete intersection $H_{f_1} \cap\ldots \cap H_{f_{k+1}} \subset \P^n$ is Chow stable, then $\calL$ is GIT stable.
\label{chow}
\end{cor}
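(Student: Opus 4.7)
The plan is to realize the assignment $\calL \mapsto B(\calL)$, where $B(\calL)\doteq \bigcap_{H\in\calL} H$ denotes the base locus, as an $SL(V)$-equivariant morphism into the appropriate Chow variety, and then to transfer GIT stability along this morphism. Since $\calL$ is regular, $B(\calL) = H_{f_1}\cap\ldots\cap H_{f_{k+1}}$ is a complete intersection of dimension $n-k-1$ and degree $d^{k+1}$ depending only on $\calL$ (not on the chosen generators). Hence we obtain an $SL(V)$-equivariant morphism
$$\pi\colon \X^{\mathrm{reg}} \longrightarrow \mathrm{Chow}(n-k-1, d^{k+1}, \P^n), \quad \calL \mapsto B(\calL)$$
on the open locus of regular linear systems in $\X$.

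The key computation is to identify $\pi^*\mathcal{O}_{\mathrm{Chow}}(1)$ as a positive multiple of the Pl\"ucker polarization. Using the classical Cayley-Chow expression of the Chow form of a complete intersection
$$R_{B(\calL)}(L_1,\ldots,L_{n-k}) = \mathrm{Res}(f_1,\ldots,f_{k+1}, L_1,\ldots,L_{n-k}),$$
together with the standard bihomogeneity of the multivariate resultant, one checks that $R_{B(\calL)}$ is of degree $d^k$ in the coefficients of each $f_j$ and transforms by $(\det g)^{d^k}$ under $g\in GL_{k+1}$. Hence it descends to a section of $\mathcal{O}_\X(d^k)$, giving $\pi^*\mathcal{O}_{\mathrm{Chow}}(1) = \mathcal{O}_\X(d^k)$. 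Consequently, for every one-parameter subgroup $\lambda$ of $SL(V)$ the Hilbert-Mumford weights satisfy
$$\mu^{\mathcal{O}_{\mathrm{Chow}}(1)}\bigl(B(\calL),\lambda\bigr) = d^k\Bigl(\omega(\calL,\lambda) - \tfrac{d(k+1)}{n+1}A_\lambda\Bigr),$$
and Chow stability of $B(\calL)$ forces the right-hand side to be negative for every $\lambda$. By Proposition \ref{HM}, this is precisely GIT stability of $\calL$.

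The main obstacle will be the detailed bookkeeping for the multidegree of the resultant in the $f_j$'s and the $GL_{k+1}$-character of its determinantal twist, so that the identification $\pi^*\mathcal{O}_{\mathrm{Chow}}(1)=\mathcal{O}_\X(d^k)$ holds on the nose and the Hilbert-Mumford transfer goes through cleanly. As a self-contained alternative that stays closer to the paper's setup, one can bypass the Chow variety entirely by combining Theorem \ref{mainTHM} with a direct weight comparison: given Chow stability of $V(f_1,\ldots,f_{k+1})$, show from the resultant description that $\omega(f_1\cdots f_{k+1},\lambda) < \tfrac{d(k+1)}{n+1} A_\lambda$ for every $\lambda$, i.e., that the degree $d(k+1)$ hypersurface $H_{f_1}+\ldots+H_{f_{k+1}}$ is GIT stable, at which point Theorem \ref{mainTHM} finishes the argument.
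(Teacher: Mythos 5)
Your strategy---pulling the Hilbert--Mumford weight back from the Chow point of the base locus to the Pl\"ucker point of $\calL$ via the Cayley/resultant description of the Chow form---is in spirit the same as the paper's Appendix A proof, just made concrete: the paper works on the normalized graph $Y$ of the rational map $G\dashrightarrow C$ and uses $\rho(G)=1$ to write $p_2^*M\sim p_1^*L-bE$, while you compute the multidegree $d^k$ and the $(\det g)^{d^k}$-character of the resultant to see that the Chow form is a degree-$d^k$ polynomial in the Pl\"ucker coordinates. That computation is correct. The genuine gap is the asserted \emph{equality} $\mu^{\mathcal{O}_{\mathrm{Chow}}(1)}\bigl(B(\calL),\lambda\bigr)=d^k\bigl(\omega(\calL,\lambda)-\tfrac{d(k+1)}{n+1}A_\lambda\bigr)$. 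The identification $\pi^*\mathcal{O}_{\mathrm{Chow}}(1)=\mathcal{O}_{\X}(d^k)$ holds only on the open locus $\X^{\mathrm{reg}}$ where $\pi$ is a morphism, whereas the Hilbert--Mumford weight is computed at the limit $\lim_{t\to0}\lambda(t)\cdot\calL$, which may leave $\X^{\mathrm{reg}}$. Concretely, each coordinate of the Chow point is a polynomial $P_\alpha$ of degree $d^k$ in the Pl\"ucker coordinates all of whose monomials carry the same torus weight $w_\alpha$; if $P_\alpha(\calL)\neq0$ then some monomial in nonvanishing Pl\"ucker coordinates survives, so $w_\alpha\geq d^k\min\{a_{I_1\ldots I_{k+1}}:M_{I_1\ldots I_{k+1}}\neq0\}$, but nothing forces the minimum over $\alpha$ to attain this bound (the $d^k$-th power of a minimal-weight Pl\"ucker coordinate need not occur in any $P_\alpha$, and cancellation can occur). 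Thus only an inequality holds---this is precisely what the correction term $-bE$ (with $b\geq0$ and $E$ effective and invariant, so $\mu^{\mathcal{O}_Y(-E)}\leq0$) accounts for in the paper. Had the equality been true, you would also get the converse implication, which neither the paper nor Sano claims. Fortunately the inequality points the right way: Chow stability makes the Chow-side weight negative, hence forces $\omega(\calL,\lambda)-\tfrac{d(k+1)}{n+1}A_\lambda<0$ for every $\lambda$, and Proposition \ref{HM} applies. So your argument is repairable by replacing ``$=$'' with ``$\geq$'' and keeping the same conclusion.

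Two smaller points. First, you still need to justify that for a \emph{regular} $\calL$ the coefficients $P_\alpha$ do not all vanish at $\calL$ (i.e.\ that $\pi$ is actually defined there), which is where the hypothesis on the dimension of the base locus enters. Second, your ``self-contained alternative'' does not go through as stated: Theorem \ref{mainTHM} requires GIT stability of $H_1+\ldots+H_{k+1}$ for \emph{every} choice of $k+1$ distinct members of $\calL$, not just for the chosen generators, and for a non-generating $(k+1)$-tuple the intersection $H_1\cap\ldots\cap H_{k+1}$ need not be the base locus of $\calL$ nor a complete intersection of the expected dimension, so Chow stability of $B(\calL)$ gives no direct control over it. The route through Lemma \ref{keylemma} (a $\lambda$-dependent choice of generators computing $\omega(\calL,\lambda)$), or simply the direct weight comparison above, avoids this.
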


\begin{proof}
It follows from \cite[Theorem 1.1]{sano}. But an alternative proof is also provided on Appendix \hyperref[app]{A}. 
\end{proof}

Finally, in view of the criterion given by Theorem \ref{mainTHM} (and its Corollaries), we end this Section by observing we have the following partial stability criteria for hypersurfaces of degree $\alpha$ in $\P^n$:

\begin{prop}
Let $H=m_1H_1+\ldots + m_{j}H_{j}$ be a (possibly) reducible hypersurface of degree $\alpha$ in $\P^n$, where each $H_i$ is irreducible of degree $d_i$. If we can find numbers $\alpha_i$ such that the following conditions hold
\begin{enumerate}[(i)]
    \item $m_1\alpha_1+\ldots +m_j\alpha_j < \frac{\alpha}{n+1}$ (resp. $\leq$)
    \item $lct(\P^n,H_i)\geq \frac{1}{\alpha_i}$
\end{enumerate}
then $H$ is GIT stable (resp. semistable) (for the action of $SL(V)$ on $S^{\alpha}V^*$).
\end{prop}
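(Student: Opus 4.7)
The plan is to apply the Hilbert–Mumford criterion for a single hypersurface (Remark \ref{stabhyp}) to $f = f_1^{m_1}\cdots f_j^{m_j}$, where $H_i=(f_i=0)$ has degree $d_i$, so that $H$ is cut out by a polynomial of degree $\alpha = \sum_i m_i d_i$. Fix a normalized one-parameter subgroup $\lambda$ of $SL(V)$; the task reduces to showing that
\[
\frac{\omega(f,\lambda)}{A_\lambda} < \frac{\alpha}{n+1} \quad (\text{resp.}\,\leq),
\]
since this, together with the arbitrariness of $\lambda$, is exactly what Remark \ref{stabhyp} requires for stability (resp. semistability).

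The first ingredient, already central to the proof of Theorem \ref{mainTHM}, is that for fixed $\lambda$ the function $\omega(-,\lambda)$ is a toric valuation (with respect to the maximal torus that diagonalizes $\lambda$), and in particular is additive on products. Therefore
\[
\omega(f,\lambda) = \sum_{i=1}^{j} m_i\,\omega(f_i,\lambda),
\]
and it is enough to bound each $\omega(f_i,\lambda)/A_\lambda$ individually in terms of $lct(\P^n, H_i)$. For this I would invoke Lemma \ref{toric}: there is a toric prime divisor $E_i$ over $\P^n$ realizing $\omega(f_i,\lambda)/A_\lambda = \mathrm{ord}_{E_i}(H_i)/A_{\P^n}(E_i)$. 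Since $lct(\P^n,H_i)$ is defined as an infimum over \emph{all} prime divisors over $\P^n$, one has $lct(\P^n,H_i) \leq A_{\P^n}(E_i)/\mathrm{ord}_{E_i}(H_i)$, and combined with hypothesis (ii) this yields
\[
\frac{\omega(f_i,\lambda)}{A_\lambda} \;\leq\; \frac{1}{lct(\P^n,H_i)} \;\leq\; \alpha_i.
\]

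Assembling the previous two displays and applying hypothesis (i),
\[
\frac{\omega(f,\lambda)}{A_\lambda} \;=\; \sum_{i=1}^{j} m_i \frac{\omega(f_i,\lambda)}{A_\lambda} \;\leq\; \sum_{i=1}^{j} m_i \alpha_i \;<\; \frac{\alpha}{n+1} \quad (\text{resp.}\,\leq),
\]
and Remark \ref{stabhyp} then gives GIT stability (resp. semistability) of $H$. No genuine obstacle is anticipated: the only nontrivial input is Lemma \ref{toric}, which is precisely what bridges the weight-theoretic quantity $\omega(f_i,\lambda)/A_\lambda$ and the birational invariant $lct(\P^n,H_i)$; the remainder is a direct assembly of tools already developed in Section \ref{narb}.
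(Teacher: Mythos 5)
Your proposal is correct and follows essentially the same route as the paper's proof: additivity of the toric valuation $\omega(-,\lambda)$ over the irreducible components, the bound $\omega(f_i,\lambda)/A_\lambda\le 1/lct(\P^n,H_i)$ via Lemma \ref{toric}, and then hypotheses (ii) and (i) in that order. The paper merely compresses these steps into a single chain of (in)equalities, citing Lemma \ref{toric} for the middle bound exactly as you do.
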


\begin{proof}
Choose a one-parameter subgroup $\lambda: \C^{\times}\to SL(V)$. By \cite[Section 8]{singpairs} (see also Lemma \ref{toric} or \cite[Corollary 4.2]{azdegd}) we have:
\begin{align*}
\frac{\omega(H,\lambda)}{A_{\lambda}} &=\sum_{i=1}^j \frac{m_i\cdot \omega(H_i,\lambda)}{A_{\lambda}}\\
&\leq \sum_{i=1}^j\frac{m_i}{lct(\P^n,H_i)} \\
&\leq m_1\alpha_1+\ldots +m_j\alpha_j\\
&< \frac{\alpha}{n+1}\quad (\text{resp.}\,\leq)
\end{align*}
\end{proof}

Furthermore, we also have:

\begin{prop}
Let $H_f=H_{f_1}+\ldots + H_{f_{k+1}}$ be a reducible hypersurface of degree $d(k+1)$ in $\P^n$, where each $H_{f_i}$ has degree $d$. If all the $H_{f_i}$ are GIT semistable, then $H$ is GIT-semistable. Furthermore, if one of the $H_{f_i}$ is GIT stable, then $H$ is GIT-stable.
\label{components}
\end{prop}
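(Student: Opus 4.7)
The plan is to apply the Hilbert--Mumford criterion from Remark \ref{stabhyp} to both the individual components $H_{f_i}$ and to the product $H_f = H_{f_1}+\ldots+H_{f_{k+1}}$, exploiting the key additivity property of $\omega(-,\lambda)$ that was already highlighted in the proof of Theorem \ref{mainTHM}. Concretely, because $\omega(-,\lambda)$ arises (up to a positive scalar) from the toric valuation $\mathrm{ord}_E$ produced in Lemma \ref{toric}, it is additive on products: for any two polynomials $f,g$ one has $\omega(fg,\lambda)=\omega(f,\lambda)+\omega(g,\lambda)$. Iterating, we get
\[
\omega(f_1\cdots f_{k+1},\lambda)=\sum_{j=1}^{k+1}\omega(f_j,\lambda).
\]

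Given this, the proposition follows by a one-line comparison. Fix an arbitrary one-parameter subgroup $\lambda$ of $SL(V)$. By Remark \ref{stabhyp}, the semistability of each $H_{f_j}$ (as a degree $d$ hypersurface) means
\[
\frac{\omega(f_j,\lambda)}{A_{\lambda}}\leq \frac{d}{n+1},
\]
with strict inequality whenever $H_{f_j}$ is GIT stable. Summing over $j=1,\ldots,k+1$ and using the additivity above,
\[
\frac{\omega(f_1\cdots f_{k+1},\lambda)}{A_{\lambda}}=\sum_{j=1}^{k+1}\frac{\omega(f_j,\lambda)}{A_{\lambda}}\leq \frac{d(k+1)}{n+1},
\]
with strict inequality as soon as at least one $H_{f_j}$ is GIT stable. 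Since this holds for every normalized one-parameter subgroup $\lambda$, Remark \ref{stabhyp} applied to the degree $d(k+1)$ hypersurface $H_f$ yields that $H_f$ is GIT semistable in the first case and GIT stable in the second case.

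There is essentially no obstacle here, since the entire content of the proposition is packaged in the valuation property of $\omega(-,\lambda)$ and the numerical form of the Hilbert--Mumford criterion; both are already established. If one wanted to avoid invoking Lemma \ref{toric}, additivity $\omega(fg,\lambda)=\omega(f,\lambda)+\omega(g,\lambda)$ can also be verified directly from the definition of $\omega$ as the minimum weight on the monomial support, using that the support of $fg$ is the Minkowski sum of the supports of $f$ and $g$ and that a linear functional attains its minimum on such a sum at the sum of the minimizers.
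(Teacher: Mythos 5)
Your proof is correct and follows essentially the same route as the paper: both rest on the additivity $\omega(f_1\cdots f_{k+1},\lambda)=\sum_j\omega(f_j,\lambda)$ together with the numerical criterion of Remark \ref{stabhyp}, the only difference being that the paper phrases the comparison as a contradiction (assuming $H$ unstable/non-stable) while you argue directly for every $\lambda$. The paper's stated justification for additivity is the same valuation property you invoke, so no further comment is needed.
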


\begin{proof}
Again, this follows from the fact that for any one-parameter subgroup $\lambda$ of $SL(V)$ we have $\omega(H,\lambda)=\omega(f_1,\lambda)+\ldots+\omega(f_{k+1},\lambda)$.

If $H$ is unstable (resp. non-stable), then we can find a one-parameter subgroup $\lambda: \C^{\times}\to SL(V)$ such that
\[
\frac{d(k+1)}{n+1} <\,(\text{resp.}\, \leq)\, \frac{\omega(f,\lambda)}{A_{\lambda}}=\sum_{j=1}^{k+1}\frac{\omega(f_j,\lambda)}{A_{\lambda}}
\]
If all the $H_{f_j}$ are semistable (resp. one of $H_{f_j}$ is further stable), then for this $\lambda$ we have $\frac{\omega(f_j,\lambda)}{A_{\lambda}}\leq \frac{d}{n+1}$ (resp. $<$) for all $j=1,\ldots,k+1$, which contradicts the previous inequality.
\end{proof}

\begin{rmk}
   Note that an analogue of Proposition \ref{components} holds for Chow stability, as observed by S.~Okawa in \cite{Okawa}. More precisely, by \cite[Proposition B.1]{Okawa} if two cycles of the same dimension in some projecive space are Chow semistable, then their sum is also Chow semistable; and, moreover, if one of the cycles is Chow stable, then so is their sum.
\end{rmk}

\section{Stability of pencils of plane curves}
\label{pencilsCurves}

For pencils of plane curves of degree $d$, Theorem \ref{mainTHM} becomes:

\begin{thm}
A pencil $\calP\in \mathscr{X}_{1,d,2}$ is  GIT stable (resp. semistable) if and only if for any two (distinct) curves $C_f$ and $C_f\in \calP$ the curve $C_f+C_g$ of degree $2d$ is GIT stable (resp. semistable).
\label{pencils}
\end{thm}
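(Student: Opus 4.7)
The plan is essentially to observe that Theorem \ref{pencils} is the immediate specialization of Theorem \ref{mainTHM} to the parameters $k=1$ and $n=2$. Since the general statement has already been established in Section \ref{sec:crit}, the only work here is to verify that the numerology matches and to invoke the result.

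Concretely, I would start by noting that a pencil $\calP \in \mathscr{X}_{1,d,2}$ is a $1$-dimensional linear system (so $k=1$) of plane curves of degree $d$ in $\P^2$ (so $n=2$). Consequently, any choice of $k+1 = 2$ generators corresponds to choosing two distinct curves $C_f, C_g \in \calP$, and the hypersurface $H_1 + \cdots + H_{k+1}$ appearing in Theorem \ref{mainTHM} becomes the curve $C_f + C_g$, whose degree is $d(k+1) = 2d$. Applying Theorem \ref{mainTHM} verbatim then yields both the stable and semistable versions of the equivalence claimed.

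Alternatively, one could observe that Theorem \ref{pencils} also follows from Corollary \ref{keycor}, since once a single $H_f \in \calP$ is fixed, Lemma \ref{keylemma} guarantees one can always find a partner $H_g \in \calP$ realizing $\omega(\calP,\lambda) = \omega(f,\lambda) + \omega(g,\lambda)$, while Lemma \ref{keylemma2} gives the reverse inequality for every pair. There is no genuine obstacle here beyond bookkeeping; the content was already packaged into Theorem \ref{mainTHM}. Thus the proof reduces to a single line citing the specialization.
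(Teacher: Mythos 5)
Your proposal matches the paper exactly: the paper introduces Theorem \ref{pencils} with the phrase ``For pencils of plane curves of degree $d$, Theorem \ref{mainTHM} becomes:'' and offers no further argument, so the result is indeed just the specialization $k=1$, $n=2$ of Theorem \ref{mainTHM}, with $d(k+1)=2d$ as you note. Your bookkeeping is correct and nothing more is needed.
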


In this Section we will show how this criterion can be used to recover the results in \cite{stab} and we will also provide a complete description of the stability of so-called Halphen pencil of index $m$, which are pencils of plane curves of degree $3m$ that have exactly nine base points, each of multiplicity $m$.

\subsection{Pencils of plane cubics revisited}
\label{cubics}

The work of Miranda in \cite{stab} describes the GIT stability of pencils of plane cubics  (with smooth members) in terms of the types of singular fibers appearing in the corresponding rational elliptic surfaces, which is obtained by blowing up the plane at the nine base points of the pencil. Using Kodaira's notation for the singular fibers, he proves:

\begin{teo}[\cite{stab}]
A pencil of plane cubics $\calP$ is stable if and only if $\calP$ contains a smooth member and every fiber of the corresponding rational elliptic surface $X_{\calP}$ is reduced. Moreover, if $\calP$ contains a smooth member, then $\calP$ is semistable if and only if $X_{\calP}$ does not contain a fiber of type $II^*,III^*$ or $IV^*$.
\end{teo}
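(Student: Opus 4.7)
The strategy is to apply Theorem \ref{pencils} to reduce the GIT (semi)stability of $\calP$ to the GIT (semi)stability of every reducible sextic of the form $C_f + C_g$ with $C_f, C_g \in \calP$ distinct. By Remark \ref{stabhyp}, Lemma \ref{toric} and Corollary \ref{hypersurftoric}, this in turn translates into a toric log canonical threshold condition on $(\P^2, C_f+C_g)$: the sextic is GIT stable (resp.\ semistable) precisely when every toric prime divisor $E$ over $\P^2$ satisfies $A_{\P^2}(E)/\mathrm{ord}_E(C_f+C_g) > \tfrac{1}{2}$ (resp.\ $\geq \tfrac{1}{2}$).

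Assuming $\calP$ contains a smooth member $C_0$, the plan is to take $C_f = C_0$ and let $C_g = C$ range over the remaining members of $\calP$. Since $C_0$ is smooth and $C_0\cap C$ coincides with the nine-point base locus $\{p_1,\ldots,p_9\}$ of $\calP$, the singularities of $C_0 + C$ are concentrated at the $p_i$'s (where both components pass) or at singular points of $C$ (where $C_0$ does not pass). At a transverse simple base point the local lct is $1$; at a singular point of $C$ lying off the base locus, the local lct equals the usual lct of that singularity of $C$ (e.g.\ $1$ for a node, $\tfrac{5}{6}$ for a cusp). The delicate contributions come from base points with higher tangency, or from infinitely near base points, since these are exactly the loci whose total transforms on $X_{\calP}$ carry the non-reduced components of the singular fibers.

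Proceeding by Kodaira type, I will verify the following matching: every reduced type $I_n, II, III, IV$ produces local toric lct strictly greater than $\tfrac{1}{2}$; every type $I_n^*$ produces toric lct exactly $\tfrac{1}{2}$; and each of $II^*, III^*, IV^*$ produces toric lct strictly less than $\tfrac{1}{2}$. Combined with the separate (easy) observation that a pencil with no smooth member must share a common singular base point and so, by Lemma \ref{keylemma}, admits a destabilising one-parameter subgroup coming from a linear action fixing that base point, one recovers both parts of Miranda's theorem.

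The main obstacle is precisely this last case analysis: for each exceptional Kodaira type one must identify the base-point configuration (including the chain of infinitely near base points) producing it, and then compute the toric log discrepancy $A_{\P^2}(E)$ and the order of vanishing $\mathrm{ord}_E(C_0+C)$ for the relevant toric exceptional divisor $E$ over some iterated blow-up of $\P^2$. The explicit classification of base-point configurations realising each singular fiber type is already tabulated in \cite{stab}, and feeding this list into the local lct computations above completes the plan.
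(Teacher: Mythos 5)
Your route is genuinely different from the one taken in Section \ref{cubics}. There, after the reduction via Theorem \ref{pencils}, the (semi)stability of each sextic $C_f+C_g$ is read off from Shah's explicit classification of (semi)stable plane sextics \cite{shah} (conditions $(i)$--$(iii)$ and $(a)$--$(c)$), and these combinatorial conditions are then matched with fiber types using \cite{azconstr}*{Proposition 4.2} and \cite{stab}*{Lemma 6.4}. You instead propose to convert GIT stability of the sextic into a toric log canonical threshold computation via Remark \ref{stabhyp}, Lemma \ref{toric} and Corollary \ref{hypersurftoric}; this is essentially the strategy of Section \ref{halphen} specialized to index $m=1$ (compare Lemma \ref{prop.crepant} and Theorems \ref{mainHalphen}, \ref{mainHalphen2}). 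What the Shah route buys is that one never has to identify which divisor computes a threshold; what your route buys is uniformity with the higher-index Halphen analysis. One small repair: restricting to pairs $C_0+C$ with $C_0$ the fixed smooth member is not justified by Theorem \ref{pencils} alone (which quantifies over \emph{all} pairs) in the sufficiency direction; you need Corollary \ref{keycor} (a consequence of Lemma \ref{keylemma}) to reduce to pairs through a chosen member.

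The genuine gap is in the instability/non-stability direction. The inequality $lct(\P^2,C_0+C)\le\underset{E:\textrm{toric}}{\inf}\,A_{\P^2}(E)/\mathrm{ord}_E(C_0+C)$ can be strict (see the remark after Corollary \ref{hypersurftoric} on P\l oski curves), so showing that a fiber of type $II^*$, $III^*$ or $IV^*$ forces $lct\le\tfrac12$ does not by itself destabilize the pencil: you must exhibit, for each such base-point configuration, a \emph{toric} divisor (with respect to some choice of coordinates) achieving the ratio $<\tfrac12$, or equivalently an explicit one-parameter subgroup. This is exactly where the exceptional stable pencils of index $m=2,3$ arise in Theorem \ref{mainHalphen2}, and verifying that no such exceptions occur at $m=1$ is the analogue of the ``hardest step'' \cite{stab}*{Lemma 6.4}; your plan defers precisely this case analysis to tables in \cite{stab}, so as written it assumes the content it is meant to prove. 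Two further points need care: the claim that at a singular point of $C$ off the base locus ``the local toric lct equals the usual lct'' again requires producing a toric divisor computing it; and the no-smooth-member case must also treat pencils with a one-dimensional base locus (a fixed component), where the common-singular-base-point argument does not apply but the doubled fixed component in $C_f+C_g$ destabilizes directly.
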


We will now describe how we can use Theorem \ref{pencils} to recover Miranda's result. First, observe that by \cite{shah} and Theorem \ref{pencils} we know a pencil of plane cubics $\calP$ is stable if and only if for any two (distinct) cubics $C_f,C_g$ in $\calP$ the sextic curve $C_f+C_g$ satisfies the following conditions:
\begin{enumerate}[(i)]
    \item it does not contain a multiple line as a component,
    \item it does not have consecutive triple points, and
    \item it does not have a point with multiplicity $\geq 4$
\end{enumerate}

Therefore, a pencil of plane cubics $\calP$ is stable if and only if the following conditions hold:
\begin{enumerate}[(i')]
\item $\calP$ contains a smooth member,
    \item any curve in $\calP$ is reduced, and
    \item at a base point of $\calP$ any curve in $\calP$ is either smooth or it has at worst a node as singularity.
\end{enumerate}

It is routine to check that conditions $(ii')$ and $(iii')$ hold if and only if every fiber of $X_{\calP}$ is reduced.

Finally, by \cite{shah} and Theorem \ref{pencils}, $\calP$ is unstable if and only if we can find two cubics $C_f,C_g$ in $\calP$ such that the sextic curve $C_f+C_g$ satisfies one of the following conditions:

\begin{enumerate}[(a)]
    \item it has a line as a component and it has a triple point on that line which remains a triple point with a threefold tangent under a blow-up, or
    \item it has a quadruple point which has a threefold or a fourfold tangent, or
    \item it has a singular point of multiplicity $\geq 5$
\end{enumerate}

Or, equivalently, $\calP$ is semistable if and only if for any two cubics $C_f,C_g$ in $\calP$ the sextic curve $C_f+C_g$ either has at worst ADE singularities or it  lies in the open orbit of a sextic containing a double conic or a triple conic in its support.  

Assuming $\calP$ has a smooth member, we see that one of the conditions $(a),(b)$ or $(c)$ holds if and only if, up to relabeling, either:

\begin{enumerate}[(a')]
    \item $C_f$ is a triple line and $C_g$ is arbitrary, or
    \item $C_f$ is the union of a double line and another line and $C_g$ is tangent to the double line at the intersection point (of the two lines), or
    \item $C_f$ is the union of a double line and another line and $C_g$ intersects the double line at a single point
\end{enumerate}

Now, if one of the conditions $(a'),(b')$ or $(c')$ holds then the fiber of $X_{\calP}$ corresponding to $C_f$ will have a component of multiplicity three, hence it must be of type $II^*,III^*$ or $IV^*$. Conversely, if $X_{\calP}$ contains a fiber of type $II^*,III^*$ or $IV^*$, then by \cite{azconstr}*{Proposition 4.2}  we know $\mathcal{P}$  is the pencil $\lambda C_f +\mu C_g =0$, where $C_f$ consists of either  a triple line or a double line and another line. This is the content of \cite{stab}*{Lemma 6.4}, which is the hardest step in the proof of Theorem 6.1 in \cite{stab}. Moreover, if $C_f$ consists of a double line and another line, then $C_f$ and $C_g$ must intersect as in $(b')$ or $(c')$, otherwise blowing-up the nine base points of $\mathcal{P}$ would not yield a fiber with a component with multiplicity three.

\subsection{Halphen pencils}
\label{halphen}

We now provide a complete description of the stability of certain pencils of plane curves of degree $3m$ which are called Halphen pencils of index $m$. First studied by the french mathematician Georges Henri Halphen in \cite{halphen}, such pencils are characterized by the property that they have exactly nine base points (possibly infinitely near), each of multiplicity $m$, and their general member is an integral curve. For details, see \cite{enriques}. Note that the case $m=1$ consists of exactly pencils of plane cubics with a smooth member. Moreover, the case $m=2$ is also well understood, and they have been considered by the second author in \cite{azconstr} and \cite{azstab}. Here we address all other cases.

Given a Halphen pencil of index $m$, say $\calP$, taking the minimal resolution of its base points, one obtains a so-called rational elliptic surface of index $m$. That is, a (smooth and projective) rational surface $Y$ that comes equipped with a genus-one fibration $Y\to \P^1$ given by $|-mK_{Y}|$ and which has exactly one multiple fiber of multiplicity $m$. As in \cite{stab} and \cite{azstab}, our strategy consists in exploring the geometry of $Y$ to describe the stability of $\calP$.

In what follows, we will prove the following:

\begin{thm}
Let $\calP$ be a Halphen pencil of index $m\ge1$ and let $Y$ denote the corresponding rational elliptic surface. If $lct(Y,F)>\frac{1}{2m}$ (resp. $\geq$) for any fiber $F$, then $\calP$ is GIT stable (resp. semistable).
\label{mainHalphen}
\end{thm}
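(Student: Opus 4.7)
By Theorem~\ref{pencils}, $\calP$ is GIT stable (resp.~semistable) if and only if the degree-$6m$ curve $C_f+C_g$ is GIT stable (resp.~semistable) for every pair of distinct members $C_f,C_g\in \calP$. By Corollary~\ref{hypersurftoric}, this holds as soon as $lct(\P^2, C_f+C_g) > \frac{3}{6m} = \frac{1}{2m}$ (resp.~$\ge$). Thus, the plan is to deduce this lct bound on $\P^2$ from the hypothesis $lct(Y,F) > \frac{1}{2m}$ (resp.~$\ge$) on every fiber of $Y\to \P^1$.

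Let $\pi\colon Y\to \P^2$ denote the minimal resolution of the base locus of $\calP$, with prime exceptional divisors $\tilde E_i$, and write $K_{Y/\P^2} = \sum_i b_i \tilde E_i$, where $b_i = A_{\P^2}(\tilde E_i) - 1 \ge 1$. Every fiber $F_\lambda$ of $Y\to \P^1$ (including the multiple fiber $mD$) is linearly equivalent to $-mK_Y$, so both $\pi^*C_\lambda$ and $F_\lambda + m K_{Y/\P^2}$ are effective divisors in the same linear equivalence class. Their difference is supported on the exceptional locus of $\pi$, whose intersection form is negative definite, hence one obtains the sharp divisorial identity
\begin{equation*}
\pi^*C_\lambda \;=\; F_\lambda + m\,K_{Y/\P^2}.
\end{equation*}
Applying this to both $C_f$ and $C_g$ and combining with the standard relations $A_{\P^2}(E) = A_Y(E) + \mathrm{ord}_E(K_{Y/\P^2})$ and $\mathrm{ord}_E(D) = \mathrm{ord}_E(\pi^*D)$, we find, for every prime divisor $E$ over $\P^2\cong Y$,
\begin{equation*}
A_{\P^2}(E) \;=\; A_Y(E)+B, \qquad \mathrm{ord}_E(C_f+C_g) \;=\; \mathrm{ord}_E(F_f+F_g) + 2mB,
\end{equation*}
where $B := \sum_i b_i\,\mathrm{ord}_E(\tilde E_i) \ge 0$.

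Since fibers over distinct points of $\P^1$ are disjoint on $Y$, the hypothesis yields $lct(Y, F_f+F_g) = \min\bigl(lct(Y,F_f), lct(Y,F_g)\bigr) > \frac{1}{2m}$ (resp.~$\ge$). Equivalently, $2m A_Y(E) > \mathrm{ord}_E(F_f+F_g)$ (resp.~$\ge$) for every $E$ with $\mathrm{ord}_E(F_f+F_g) > 0$; adding $2mB$ to both sides preserves the inequality. When $\mathrm{ord}_E(F_f+F_g)=0$ and $B>0$ (for instance, $E=\tilde E_i$) a direct computation gives $\frac{A_{\P^2}(E)}{\mathrm{ord}_E(C_f+C_g)} = \frac{1}{2m} + \frac{A_Y(E)}{2mB} > \frac{1}{2m}$, and when $B=0$ the divisor $E$ does not meet $C_f+C_g$ at all. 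In every case $\frac{A_{\P^2}(E)}{\mathrm{ord}_E(C_f+C_g)} > \frac{1}{2m}$ (resp.~$\ge$), so minimizing over $E$ gives $lct(\P^2, C_f+C_g) > \frac{1}{2m}$ (resp.~$\ge$), as required.

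The main technical obstacle is establishing the divisorial identity $\pi^*C_\lambda = F_\lambda + m K_{Y/\P^2}$ for \emph{every} member of $\calP$---in particular for the non-reduced special member whose strict transform realises the multiple fiber $mD$, and in the situation where some of the nine base points of $\calP$ are infinitely near. Once it is secured, the remaining comparison of log discrepancies and valuations is routine.
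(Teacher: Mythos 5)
Your proof is correct and follows essentially the same route as the paper: your divisorial identity $\pi^*C_\lambda = F_\lambda + mK_{Y/\P^2}$ is exactly the content of Lemma \ref{prop.crepant} (there phrased as the crepant pullback $K_Y+\frac{1}{2m}(F_f+F_g)=\pi^*\bigl(K_{\P^2}+\frac{1}{2m}(C_f+C_g)\bigr)$, with $mK_{Y/\P^2}$ identified with the fixed part of $\pi^*\calP$), and the remaining reduction through Theorem \ref{pencils}, the disjointness of distinct fibers, and the lct criterion for hypersurface stability is the same. The only cosmetic difference is that the paper invokes Corollary \ref{keycor} to fix $C_f$ corresponding to a smooth fiber and test only pairs containing it, whereas you test all pairs; both work.
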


Some direct consequences of Theorem \ref{mainHalphen} are:

\begin{cor}
Any Halphen pencil of index $m>3$ is GIT stable. Moreover, a Halphen pencil of index $m=3$ is always GIT semistable, and it is stable whenever the corresponding rational elliptic surface does not contain a fiber of type $II^*$.
\label{corHalphen1}
\end{cor}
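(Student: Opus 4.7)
The plan is to derive the corollary directly from Theorem \ref{mainHalphen}: it suffices to bound $lct(Y,F)$ from below on every fiber $F$ of the associated rational elliptic surface $Y\to\mathbb{P}^1$ and compare the result with $\frac{1}{2m}$. Thus the whole argument reduces to a short case analysis driven by Kodaira's classification of singular fibers.

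First I would assemble the log canonical thresholds of the Kodaira fiber types on a smooth surface. These are classical invariants: for the reduced non-multiple fibers the lct takes the values $\{1,\tfrac{5}{6},\tfrac{3}{4},\tfrac{2}{3},\tfrac{1}{2},\tfrac{1}{3},\tfrac{1}{4},\tfrac{1}{6}\}$ for types $I_n,II,III,IV,I_n^*,IV^*,III^*,II^*$ respectively. In the SNC cases this is just the reciprocal of the maximal component multiplicity, while the remaining irreducible types $II,III,IV$ are treated by recognizing the analytic germ as an $A_k$-singularity (or ordinary triple point) and applying the standard formula for $lct(x^a+y^b)$. The crucial consequence is that $lct(Y,F)\geq \tfrac{1}{6}$ for every reduced fiber, with equality exactly on type $II^*$. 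For the unique multiple fiber $mF_0$ of a Halphen pencil of index $m$, Kodaira's classification of multiple fibers on relatively minimal genus-one fibrations forces $F_0$ to be of type $I_n$ with $n\geq 0$, so $lct(Y,F_0)=1$ and therefore $lct(Y,mF_0)=\tfrac{1}{m}>\tfrac{1}{2m}$, so the multiple fiber never causes trouble.

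The corollary then follows by direct numerical comparison. If $m>3$, then $\tfrac{1}{2m}<\tfrac{1}{6}\leq lct(Y,F)$ for every fiber $F$, so the strict form of the hypothesis of Theorem \ref{mainHalphen} is satisfied and $\calP$ is GIT stable. If $m=3$, then $\tfrac{1}{2m}=\tfrac{1}{6}$, so the non-strict inequality $lct(Y,F)\geq\tfrac{1}{2m}$ holds unconditionally (giving semistability), while the strict inequality holds for every $F$ if and only if $Y$ contains no fiber of type $II^*$ (giving the stability clause). I do not expect any substantial obstacle here: the only care required is to correctly invoke the two classical ingredients---the lct values for Kodaira fibers and Kodaira's restriction to $mI_n$ for multiple fibers on relatively minimal genus-one fibrations---after which the statement is a routine comparison.
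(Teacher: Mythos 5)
Your proposal is correct and follows exactly the paper's route: the corollary is deduced from Theorem \ref{mainHalphen} together with the table of log canonical thresholds of Kodaira fibers recorded in Remark \ref{lct}, by comparing the minimum value $\frac{1}{6}$ (attained only by type $II^*$) with $\frac{1}{2m}$. The only content you add is the standard justification of the lct table and of the $mI_n$ restriction on multiple fibers, which the paper takes as known.
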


\begin{cor}
Any Halphen pencil of index $m=2$ is GIT stable whenever the corresponding rational elliptic surface $Y$ does not contain a fiber of type $II^*$ or $III^*$. Moreover, if $Y$ does have a fiber of type $III^*$, then $\calP$ is always semistable (cf. \cite{azstab}).
\label{corHalphen2}
\end{cor}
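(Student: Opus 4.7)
The plan is to deduce the corollary from Theorem \ref{mainHalphen} by carrying out the case analysis of Kodaira fiber types. Setting $m=2$ in Theorem \ref{mainHalphen} gives the threshold $\frac{1}{2m}=\frac{1}{4}$, so it suffices to determine $lct(Y,F)$ for every fiber $F$ of the elliptic fibration $Y\to\P^1$ and compare it with $\frac{1}{4}$.

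For the multiple fiber $2F_0$, I would invoke Kodaira's classification of multiple fibers of genus-one fibrations, which forces $F_0$ to be of Kodaira type $I_b$ for some $b\ge 0$. In particular $F_0$ is either a smooth elliptic curve or an SNC cycle of smooth rational curves with every component of multiplicity one, so $lct(Y,F_0)=1$ and hence $lct(Y,2F_0)=\frac{1}{2}>\frac{1}{4}$; the multiple fiber therefore never obstructs stability.

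For the remaining (non-multiple) fibers I would split by whether they are already SNC on $Y$. Each starred type $I_n^{*}, IV^{*}, III^{*}, II^{*}$ is SNC with maximal component multiplicity $2, 3, 4, 6$ respectively, and since on a smooth surface $lct(Y,\sum m_iD_i)=\min_i\frac{1}{m_i}$ holds for any SNC divisor (a direct toric computation using the formulas of Section \ref{sec:lct}, with the minimum already attained by the components $D_i$ themselves), this reads off as $lct=\frac{1}{2}, \frac{1}{3}, \frac{1}{4}, \frac{1}{6}$ in the four cases. For the reduced non-SNC types $II, III, IV$ and for the nodal $I_1$, the fiber has a unique singular point of local analytic type $A_2, A_3, D_4, A_1$ respectively; resolving by one or two blow-ups and computing $A_Y(E)/\mathrm{ord}_E(F)$ on the exceptional divisors of the resulting SNC model recovers the classical values $\frac{5}{6}, \frac{3}{4}, \frac{2}{3}, 1$, all comfortably above $\frac{1}{4}$. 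The remaining reduced SNC types $I_0$ and $I_n$ ($n\ge 2$) trivially have $lct=1$.

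Assembling the table, the only Kodaira types $F$ with $lct(Y,F)\le \frac{1}{4}$ are $II^{*}$ (with $lct=\frac{1}{6}$) and $III^{*}$ (with $lct=\frac{1}{4}$). Consequently, if $Y$ contains neither a $II^{*}$ nor a $III^{*}$ fiber then $lct(Y,F)>\frac{1}{4}$ for every $F$, and the stable half of Theorem \ref{mainHalphen} yields GIT stability of $\calP$; while if $Y$ has no $II^{*}$ fiber (which is automatic whenever $Y$ has a $III^{*}$ fiber, by the Euler-number bound $\chi(Y)=12$), then $lct(Y,F)\ge \frac{1}{4}$ for every $F$ with equality attained only along the $III^{*}$ fibers, and the semistable half of Theorem \ref{mainHalphen} yields GIT semistability. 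The one step requiring genuine care is the $lct$ computation for the non-SNC reduced types $II, III, IV$, but since those values ($\frac{5}{6}, \frac{3}{4}, \frac{2}{3}$) all comfortably exceed $\frac{1}{4}$, the case analysis is robust and the precise values are irrelevant to the argument.
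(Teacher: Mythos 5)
Your proposal is correct and matches the paper's (implicit) argument: the corollary is stated there as a direct consequence of Theorem \ref{mainHalphen} combined with the table of values of $lct(Y,F)$ in Remark \ref{lct}, which is exactly the comparison with the threshold $\frac{1}{2m}=\frac{1}{4}$ that you carry out, including the Euler-number observation ruling out the coexistence of $II^*$ and $III^*$ fibers. The only difference is that you also verify the table entries (SNC multiplicities for the starred types, local singularity types for $II$, $III$, $IV$), which the paper takes as known; these verifications are all accurate.
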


\begin{rmk}
Note that the number $lct(Y,F)$ is given by the table below, depending on the type of $F$: 
\begin{table}[H]
\centering
\label{lct}
\begin{tabular}{c|c||c|c}
\boldmath{$lct(Y,F)$} & \bf{Type of \boldmath{$F$}}&\boldmath{$lct(Y,F)$} & \bf{Type of \boldmath{$F$}}\\
\hline
$1/m$ & $_mI_n$ & $1/2$ & $I_n^*$\\
$5/6$ & $II$ & $1/6$ & $II^*$ \\
$3/4$ & $III$ & $1/4$ & $III^*$\\
$2/3$ & $IV$ & $1/3$ & $IV^*$
\end{tabular}
\end{table}

Moreover, the condition $lct(Y,F)>\frac{1}{2m}$ is known to be equivalent to the notion of uniform adiabatic K-stability introduced in \cite{Hat}. In particular, Theorem \ref{mainHalphen} implies GIT-stability of Halphen pencils is closely related to the existence of K\"{a}hler metrics with constant scalar curvature on the corresponding rational elliptic surfaces.
\label{lct}
\end{rmk}

Later, we will further prove Theorem \ref{mainHalphen2}, which says the converse of Theorem \ref{mainHalphen} also holds except for a few Halphen pencils of index $m=2$ and of index $m=3$. But first we prove Theorem \ref{mainHalphen}. In our proof we will need the following:

\begin{lema}\label{prop.crepant}
Let $\mathcal{P}$ be a pencil of plane curves of degree $m\cdot s$ with  $s^2$ base points (possibly infinitely near), each of multiplicity $m$. Suppose that $\mathcal{P}$ contains at least one integral curve. Let $\pi:Y\to \mathbb{P}^2$ denote the $s$-fold blow-up which resolves $\calP$ and let $q:Y\to \mathbb{P}^1$ be the morphism induced by $\mathcal{P}$. 

Then there exists a curve $C_f\in \calP$ such that for any other curve $C_g\in\mathcal{P}$, we have that
\[
K_Y+\frac{1}{2m}(F_f+F_g)=\pi^*\left(K_{\mathbb{P}^2}+\frac{1}{2m}(C_f+C_g)\right).
\]
where $F_f$ and $F_g$ are the corresponding fibers of $q$
\end{lema}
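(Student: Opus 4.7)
The plan is to reduce the desired identity to two standard blow-up formulas together with a description of the fibers of $q$ in terms of pullbacks from $\mathbb{P}^2$.

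First, I would fix notation: let $p_1, \dots, p_{s^2}$ denote the (possibly infinitely near) base points blown up by $\pi$, and write $\mathcal{E}_j$ for the total transform in $Y$ of the exceptional divisor created at the blow-up of $p_j$. Since each step of $\pi$ blows up a smooth point of a smooth surface, iterating the elementary identity for a single blow-up yields
\[
K_Y = \pi^* K_{\mathbb{P}^2} + \sum_{j=1}^{s^2} \mathcal{E}_j, \qquad \pi^* C = \widetilde{C} + \sum_{j=1}^{s^2} \mu_j(C)\, \mathcal{E}_j,
\]
where $\widetilde{C}$ is the proper transform of any plane curve $C$ and $\mu_j(C)$ is the multiplicity of $\widetilde{C}$ at $p_j$.

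Next, I would identify the fibers of $q$. The morphism $q$ is defined by the moving part of the pullback pencil $\pi^*\mathcal{P}$, so there is a fixed effective divisor $D$ (the common fixed part of $\pi^*\mathcal{P}$) such that $F_h = \pi^* C_h - D$ for every $C_h \in \mathcal{P}$. I would then argue that $D = m\sum_j \mathcal{E}_j$, relying both on the fact that each $p_j$ is a base point of multiplicity exactly $m$ (which fixes the coefficient of each $\mathcal{E}_j$ in $D$) and on the hypothesis that $\mathcal{P}$ contains an integral curve (which rules out a common non-exceptional component of every member of $\mathcal{P}$). Consequently,
\[
F_h = \pi^* C_h - m \sum_{j=1}^{s^2} \mathcal{E}_j \qquad \text{for every } C_h \in \mathcal{P}.
\]

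Finally, picking $C_f$ to be an integral member of $\mathcal{P}$ and taking any other $C_g \in \mathcal{P}$, a direct substitution gives
\begin{align*}
\pi^*\!\left(K_{\mathbb{P}^2} + \tfrac{1}{2m}(C_f + C_g)\right)
&= \pi^* K_{\mathbb{P}^2} + \tfrac{1}{2m}\bigl(\pi^* C_f + \pi^* C_g\bigr) \\
&= \Bigl(K_Y - \sum_j \mathcal{E}_j\Bigr) + \tfrac{1}{2m}\Bigl(F_f + F_g + 2m\sum_j \mathcal{E}_j\Bigr) \\
&= K_Y + \tfrac{1}{2m}(F_f + F_g),
\end{align*}
as required.

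The main obstacle is the middle step, namely pinning down the fixed part $D$ of the pullback pencil $\pi^*\mathcal{P}$ as exactly $m\sum_j \mathcal{E}_j$; this is the unique point at which the hypothesis on the existence of an integral member is essential, since otherwise a non-exceptional curve component could be forced into $D$ and spoil the coefficient matching. Once $D$ is correctly identified, the rest is purely formal.
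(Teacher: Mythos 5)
Your proof is correct and follows essentially the same route as the paper's: both reduce the identity to the statement that the fixed part $D$ of $\pi^*\mathcal{P}$ equals $m\,K_{Y/\mathbb{P}^2}=m\sum_j\mathcal{E}_j$ and then substitute formally. The only difference is cosmetic — the paper asserts $m\cdot K_{Y/\mathbb{P}^2}=D$ as ``routine to check,'' whereas you spell out the two blow-up formulas and the coefficient matching that justify it (and you pick $C_f$ integral rather than corresponding to a smooth fiber, which is immaterial since the identity in fact holds for every pair of members once $D$ is identified).
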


\begin{proof}
We choose $C_f$ to be a curve in $\calP$ corresponding to a smooth fiber of $q$. Given any other curve $C_g$ in $\calP$, let $F_g$ be the fiber of $q$ corresponding to $C_g$ and let $\mathrm{D}$ be the fixed part of $\pi^*\mathcal{P}$. Then, it is routine to check $m \cdot K_{Y/\mathbb{P}^2}=\mathrm{D}$. In particular,


  \begin{align*}
  \pi^*\left(K_{\mathbb{P}^2}+\frac{1}{2m}(C_f+C_g)\right)&=K_Y-K_{Y/\mathbb{P}^2}+\frac{1}{2m}(F_f+F_g+2\mathrm{D})\\
  &=K_Y+\frac{1}{2m}(F_f+F_g).
  \end{align*}
  \end{proof}

The argument is then as follows:

\begin{proof}[Proof of Theorem \ref{mainHalphen}]

We first observe Halphen pencils of any index $m$ satisfy the assumptions of Lemma \ref{prop.crepant}. For we can choose $C_f$ to be any curve corresponding to a smooth fiber of the corresponding genus-one fibration. Now, choose any other curve $C_g\in \calP$. Lemma \ref{prop.crepant} implies that 
\[
lct(Y,F_f+F_g)>\frac{1}{2m}\,(\text{resp.}\,\geq)\,\iff lct(\mathbb{P}^2,C_f+C_g)>\frac{1}{2m} \,(\text{resp.}\,\geq)
\]

Note that $lct(Y,F_f)=1$. Thus, $C_f+C_g$ is stable (resp. semistable). Now, because $C_g$ was arbitrary, it follows from Corollary \ref{keycor} that $\mathcal{P}$ is GIT stable (resp. semistable).
\end{proof}

Next, in view of Corollaries \ref{corHalphen1} and \ref{corHalphen2}, we further prove:

\begin{thm}
Let $\calP$ be a Halphen pencil of index $m$ and assume $\calP$ is not given by \cite[Examples 7.46, 7.47, 7.55]{azconstr} nor Example \ref{exc.case3} below. If $lct(Y,F)<\frac{1}{2m}$ (resp., $\le\frac{1}{2m}$) for some fiber $F$, then $\calP$ is GIT unstable (resp., GIT non-stable).
    \label{mainHalphen2}
\end{thm}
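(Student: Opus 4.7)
The plan is to invert the argument used in the proof of Theorem \ref{mainHalphen}. By Corollary \ref{keycor}, it suffices to produce a pair of distinct members $C_f, C_g \in \calP$ such that the degree-$6m$ plane curve $C_f + C_g$ is GIT unstable (resp.\ non-stable). I take $C_f$ to correspond to a smooth fiber $F_f$ of $q \colon Y \to \mathbb{P}^1$, as permitted by Lemma \ref{prop.crepant}, and $C_g$ to correspond to the ``bad'' fiber $F$ with $lct(Y,F) < \tfrac{1}{2m}$ (resp.\ $\leq$). Since smooth fibers have log canonical threshold equal to $1$ and any two distinct fibers of $q$ are disjoint, one has $lct(Y, F_f+F_g) = lct(Y,F)$; the crepant pullback of Lemma \ref{prop.crepant} then yields
\[
lct\bigl(\mathbb{P}^2,\, C_f + C_g\bigr) \;=\; lct(Y, F) \;<\; \tfrac{1}{2m} \;=\; \tfrac{n+1}{\deg(C_f + C_g)}
\]
(resp.\ $\leq$). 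This is exactly the numerical inequality required to apply the toric converse direction of Corollary \ref{hypersurftoric} to conclude that $C_f + C_g$ is GIT unstable (resp.\ non-stable), provided that the lct on the left is computed by a toric prime divisor over $\mathbb{P}^2$ in some choice of homogeneous coordinates.

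The core technical step is therefore to establish such a toric realization outside of the listed exceptional pencils. In view of Remark \ref{fac1}, it will suffice to choose coordinates $(x_0:x_1:x_2)$ so that (a) the image $p \in \mathbb{P}^2$ of the point where $lct(Y,F)$ is achieved is a torus-fixed point, and (b) each successive center in the tower of blow-ups resolving the singularity of $C_f + C_g$ at $p$ lies on the intersection of two torus-invariant divisors. I will carry this out as a case analysis over the finite list of fiber types with $lct(Y,F) \leq \tfrac{1}{2m}$ --- namely $II^*, III^*, IV^*$ (and $I_n^*$ for semistability) when $m=1$, $II^*$ and $III^*$ when $m=2$, and $II^*$ when $m=3$ --- together with the classification of Halphen pencils realizing each such fiber. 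For $m \leq 2$ this classification is available in \cite{azconstr}; for $m=3$ I will extract the short explicit list needed. In each non-exceptional configuration the local equation of $C_g$ at $p$ is rigid enough that its tangent cone can be aligned with the coordinate axes, producing the required toric divisor computing the lct.

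The main obstacle is pinning down the configurations in which no such alignment is possible. These correspond to tangent cones at $p$ involving three or more concurrent directions that cannot simultaneously be placed on torus-invariant lines in any coordinate system, so that every divisor computing the lct is genuinely non-toric and Corollary \ref{hypersurftoric} fails to apply. A finite enumeration of the local models arising from the case analysis above will identify this obstruction precisely with the pencils of \cite[Examples 7.46, 7.47, 7.55]{azconstr} for $m=2$ and with Example \ref{exc.case3} for $m=3$. To make the excluded list sharp I will still need to verify that these specific pencils are themselves GIT stable, which amounts to a direct Hilbert--Mumford computation (Proposition \ref{HM}) on the Pl\"{u}cker coordinates for well-adapted generators, checking that $\omega(\calL,\lambda)/A_\lambda < 2m$ for every normalized one-parameter subgroup $\lambda$ --- a finite-dimensional optimization that can be dispatched in each explicit example.
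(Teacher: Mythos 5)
Your reduction is the same one the paper uses: combine Corollary \ref{keycor}, the crepant pullback of Lemma \ref{prop.crepant}, and the toric converse of Corollary \ref{hypersurftoric}, so that everything hinges on exhibiting a \emph{toric} divisor over $\P^2$ witnessing the inequality $\frac{A_{\P^2}(E)}{\mathrm{ord}_E(C_f+C_g)}\le\frac{1}{2m}$ (resp.\ $<$). (One small imprecision: Lemma \ref{prop.crepant} is a crepant relation only at the coefficient $\frac{1}{2m}$, so it gives the threshold comparison $lct(\P^2,C_f+C_g)\le\frac{1}{2m}\iff lct(Y,F_f+F_g)\le\frac{1}{2m}$ rather than the equality $lct(\P^2,C_f+C_g)=lct(Y,F)$ you assert; this does not affect the argument since only the threshold comparison is used.)

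The genuine gap is that the core technical step --- the case analysis proving that such a toric divisor exists for every non-exceptional pencil --- is announced (``I will carry this out\ldots'', ``a finite enumeration\ldots will identify\ldots'') but never executed, and this is essentially the entire content of the theorem beyond the reduction. The paper's proof spends almost all of its length exactly here: for $m=1,2$ it cites \cite{stab} and \cite[\S 7]{azconstr}, and for $m=3$ with a fiber of type $II^*$ it works on $Y$ with the dual graph of the $II^*$ fiber, singles out the multiplicity-$6$ component $\alpha_4$ as the candidate toric divisor, and runs a four-way case analysis on whether the adjacent multiplicity-$3$ component $\alpha_2$ is the strict transform of a cubic, a conic, a line, or is exceptional --- using \cite[Lemma 4.1]{azconstr}, the fact that $(-1)$-curves are trisections, and self-intersection counts after contraction to rule out two cases, establish toricity via Remark \ref{fac1} in another, and isolate Example \ref{exc.case3} in the last. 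Your proposed substitute (aligning tangent cones of $C_g$ at the point of $\P^2$ where the lct is ``achieved'') is not obviously workable as stated, since for a $II^*$ fiber the lct is computed by a fiber \emph{component}, and whether that component is toric is a global question about which components of the resolution are strict transforms of plane curves versus exceptional divisors over torus-invariant centers --- precisely the combinatorial analysis you have deferred. Finally, for the excluded pencils the paper does not run a brute-force Hilbert--Mumford optimization as you suggest, but uses Lemma \ref{sscurve} (the curve under the bad fiber is GIT semistable, hence by Lemma \ref{ineq} and Proposition \ref{components} every sum $C_f+C_g$ is stable), which is both cleaner and already available in the text.
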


To prove Theorem \ref{mainHalphen2} we first observe that, in view of Lemma \ref{prop.crepant} and Corollary \ref{hypersurftoric}, if there exists a toric divisor $E\subset Y$ over $\mathbb{P}^2$ (see Definition \ref{DefToricDiv}) and a fiber $F$ of $Y$ such that $\frac{A_{\mathbb{P}^2}(E)}{\mathrm{ord}_E(F)}\le\frac{1}{2m}$ (resp. $<$), then the Halphen pencil $\mathcal{P}$ is GIT non-stable (resp. unstable). We call such toric divisors destabilizing and we say $E$ makes $\calP$ non-stable (resp. unstable). 

Furthermore, we also observe the multiple cubic $mC$ in $\calP$ is such that $lct(\P^2,mC)=\frac{1}{m}$ \cite[Proposition 4.9]{azconstr} and that the following two Lemmas hold: 

\begin{lema}[{\cite[Theorem 1.1]{azconstr}}]
If $m>1$ and $F_f$ is a reduced fiber of $Y$, then the corresponding curve $C_f$ is reduced and we have that 
\[
\frac{1}{m}<lct(\mathbb{P}^2,C_f)\leq lct(Y,F_f).
\]
\label{ineq}
\end{lema}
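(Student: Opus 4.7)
The crepant identity
\[
K_Y + \frac{1}{m}F_f = \pi^{*}\!\left(K_{\mathbb{P}^2} + \frac{1}{m}C_f\right)
\]
follows at once from $\pi^{*}C_f = F_f + D$ and $m\, K_{Y/\mathbb{P}^2} = D$, just as in the proof of Lemma \ref{prop.crepant}; equivalently, for every prime divisor $E$ over $\mathbb{P}^2$ we have
\[
A_{\mathbb{P}^2}(E) - \tfrac{1}{m}\mathrm{ord}_E(C_f) \;=\; A_Y(E) - \tfrac{1}{m}\mathrm{ord}_E(F_f).
\]
This identity, together with the fact that $D$ is $\pi$-exceptional, will drive all three claims of the lemma.

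\textbf{Reducedness of $C_f$.} If some irreducible component $C_0 \subset C_f$ carried multiplicity $n \geq 2$, then because $D$ is $\pi$-exceptional the strict transform $\widetilde{C_0}$ would appear with multiplicity $n \geq 2$ in $F_f = \pi^{*}C_f - D$, contradicting the hypothesis that $F_f$ is reduced.

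\textbf{The two inequalities.} Since $F_f$ is reduced it is a Kodaira fiber of type $I_n$, $II$, $III$ or $IV$, so Remark \ref{lct} gives $lct(Y, F_f) \in \{1, 5/6, 3/4, 2/3\}$; in particular $lct(Y, F_f) > \tfrac{1}{m}$ for $m \geq 2$. Combined with the crepant identity, this immediately yields $A_{\mathbb{P}^2}(E) > \tfrac{1}{m}\mathrm{ord}_E(C_f)$ for every prime divisor $E$ with $\mathrm{ord}_E(C_f) > 0$, and hence $lct(\mathbb{P}^2, C_f) > \tfrac{1}{m}$. For the comparison $lct(\mathbb{P}^2, C_f) \leq lct(Y, F_f)$, pick $E_0$ computing $c^{*} := lct(Y, F_f)$. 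Using $A_{\mathbb{P}^2}(E_0) = A_Y(E_0) + \tfrac{1}{m}\mathrm{ord}_{E_0}(D)$ and $\mathrm{ord}_{E_0}(C_f) = \mathrm{ord}_{E_0}(F_f) + \mathrm{ord}_{E_0}(D)$ one computes
\[
A_{\mathbb{P}^2}(E_0) - c^{*}\mathrm{ord}_{E_0}(C_f) \;=\; \bigl(\tfrac{1}{m} - c^{*}\bigr)\mathrm{ord}_{E_0}(D) \;\leq\; 0,
\]
since $c^{*} > \tfrac{1}{m}$ and $\mathrm{ord}_{E_0}(D) \geq 0$. Hence $\frac{A_{\mathbb{P}^2}(E_0)}{\mathrm{ord}_{E_0}(C_f)} \leq c^{*}$, and taking the infimum on the left gives the claim.

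\textbf{Expected difficulty.} The reducedness step is the one requiring genuine input about the geometry of the Halphen resolution — specifically, that $D$ lies entirely over the base locus and therefore cannot absorb a non-reduced component of $C_f$. Once this is granted, the remaining assertions are pure bookkeeping with log discrepancies through the crepant relation, combined with the standard $lct$ values of reduced Kodaira fibers.
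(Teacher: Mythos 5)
Your proof is correct, but there is nothing in the paper to compare it against: the lemma is simply imported as \cite[Theorem 1.1]{azconstr}, with no internal argument given. Your derivation is a clean, self-contained substitute that stays entirely within the paper's own toolkit. The single-fiber crepant identity $K_Y+\frac{1}{m}F_f=\pi^*\left(K_{\mathbb{P}^2}+\frac{1}{m}C_f\right)$ is exactly the one-member analogue of Lemma \ref{prop.crepant} (using $m\,K_{Y/\mathbb{P}^2}=\mathrm{D}$ and $\pi^*C_f=F_f+\mathrm{D}$); the reducedness claim correctly exploits that the fixed part $\mathrm{D}$ is $\pi$-exceptional while the strict transform of a plane curve never is; and the two inequalities follow from the crepant comparison of log discrepancies together with the values $lct(Y,F_f)\in\{1,5/6,3/4,2/3\}$ for reduced Kodaira fibers, which indeed exceed $\frac{1}{m}$ once $m\ge2$. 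Two small points are worth making explicit, though neither is a gap: (1) in the first inequality you pass from ``every ratio $\frac{A_{\mathbb{P}^2}(E)}{\mathrm{ord}_E(C_f)}$ exceeds $\frac{1}{m}$'' to ``$lct(\mathbb{P}^2,C_f)>\frac{1}{m}$'', which uses the standard fact, recorded in Section \ref{sec:lct}, that the infimum defining the log canonical threshold is attained; (2) in the second inequality one should note that $\mathrm{ord}_{E_0}(C_f)=\mathrm{ord}_{E_0}(F_f)+\mathrm{ord}_{E_0}(\mathrm{D})>0$ because $E_0$ computes $lct(Y,F_f)$, so the final division is legitimate. With those remarks added, your argument is complete and arguably preferable to an external citation, since it makes the $m>1$ hypothesis and the role of the reduced-fiber classification transparent.
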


\begin{lema}
If $m>1$ and $Y$ contains a fiber $F$ such that $lct(Y,F)\le\frac{1}{2m}$ but the corresponding plane curve $C_g\in\mathcal{P}$ is GIT semistable, then $\mathcal{P}$ is GIT stable.
\label{sscurve}
\end{lema}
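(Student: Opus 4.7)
\emph{Plan.} The plan is proof by contradiction: assume $\calP$ is not GIT stable. Applying Corollary \ref{keycor} with $H_f = C_g$, we obtain $H \in \calP$ distinct from $C_g$ such that $C_g + H$ (of degree $6m$) is GIT non-stable. Proposition \ref{HM} together with Lemma \ref{toric} then provide a toric divisor $E$ over $\P^2$ with $\mathrm{ord}_E(C_g + H)/A_{\P^2}(E) \ge 2m$. The hypothesis that $C_g$ is GIT semistable gives $\mathrm{ord}_E(C_g)/A_{\P^2}(E) \le m$, forcing $\mathrm{ord}_E(H)/A_{\P^2}(E) \ge m$.

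Next, I would transfer the inequality to $Y$ via the crepant identity
\[
K_Y + \frac{1}{2m}(F_g + F_H) = \pi^*\bigl(K_{\P^2} + \frac{1}{2m}(C_g + H)\bigr),
\]
which holds for any pair of curves in $\calP$ as a direct consequence of $mK_{Y/\P^2} = D$ (the key identity in the proof of Lemma \ref{prop.crepant}). This yields $\mathrm{ord}_E(F_g + F_H)/A_Y(E) \ge 2m$. Since $F_g$ and $F_H$ are distinct fibers of $q : Y \to \P^1$, they are disjoint; hence exactly one of $\mathrm{ord}_E(F_g)$ and $\mathrm{ord}_E(F_H)$ is positive, splitting the argument into two cases.

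In the first case, $\mathrm{ord}_E(F_g)/A_Y(E) \ge 2m$, I would substitute $\mathrm{ord}_E(F_g) = \mathrm{ord}_E(C_g) - \mathrm{ord}_E(D)$ together with $A_Y(E) = A_{\P^2}(E) - \mathrm{ord}_E(D)/m$ (both following from $mK_{Y/\P^2} = D$ and $\pi^*C_g = F_g + D$), combine with the semistability bound $\mathrm{ord}_E(C_g) \le m\,A_{\P^2}(E)$, and derive $\mathrm{ord}_E(D) \ge m\,A_{\P^2}(E)$. This contradicts the identity $m\,A_{\P^2}(E) - \mathrm{ord}_E(D) = m\,A_Y(E) > 0$. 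In the second case, $\mathrm{ord}_E(F_H)/A_Y(E) \ge 2m$ gives $lct(Y, F_H) \le \tfrac{1}{2m}$; by Remark \ref{lct} and the assumption $m > 1$, both the hypothesis fiber $F$ and $F_H$ must be of type $II^*$ or $III^*$, but these contribute $10$ and $9$ respectively to $\chi(Y) = 12$, so $Y$ cannot contain two such fibers.

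The main obstacle is the first case, where the correct use of the identity $mK_{Y/\P^2} = D$ to simultaneously control $\mathrm{ord}_E(F_g)$ and $A_Y(E)$ in terms of $\P^2$-quantities is the delicate step; the second case reduces to a routine Euler-characteristic computation once the type of $F_H$ is constrained.
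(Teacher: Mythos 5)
Your argument is correct, but it follows a genuinely different route from the paper's. The paper proves the lemma directly: it anchors Corollary \ref{keycor} at a curve $C_f$ corresponding to a \emph{smooth} fiber, notes that $F$ must be of type $II^*$ or $III^*$ and is the unique such fiber on $Y$, and then checks $C_f+C_{g'}$ is stable for every other member $C_{g'}$ --- arguing as in Theorem \ref{mainHalphen} when $F_{g'}\neq F$, and, when $C_{g'}=C_g$, deducing from Lemma \ref{ineq} and Corollary \ref{hypersurftoric} that $C_f$ is GIT stable and then invoking Proposition \ref{components}. You instead anchor Corollary \ref{keycor} at $C_g$ itself, argue by contradiction, extract a destabilizing toric valuation, and push it to $Y$ through the crepant identity. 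Your Case 1 re-derives by explicit log-discrepancy arithmetic what the paper gets in one line from Proposition \ref{components}; a slightly cleaner version of the same computation is to apply the crepant identity to the single pair $(\P^2,\tfrac{1}{m}C_g)$ versus $(Y,\tfrac{1}{m}F_g)$, which turns semistability of $C_g$ directly into $\mathrm{ord}_E(F_g)\le m\,A_Y(E)<2m\,A_Y(E)$. Your Case 2 reproduces the paper's ``at most one fiber of type $II^*$ or $III^*$'' observation via Euler numbers rather than the Picard number; both work. What each approach buys: the paper's proof is shorter because Proposition \ref{components} and Lemma \ref{ineq} absorb the valuation computation, while yours is more self-contained at the level of valuations and makes transparent exactly where the hypothesis that $C_g$ is semistable enters. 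Two minor points: the destabilizing one-parameter subgroup for the degree-$6m$ curve $C_g+H$ should be cited from Remark \ref{stabhyp} rather than Proposition \ref{HM}, and your observation that the identity of Lemma \ref{prop.crepant} holds for an arbitrary pair of members of $\calP$ (not only pairs containing a smooth-fiber curve) is correct and is exactly what its proof establishes.
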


\begin{proof}
First, note that $F$ is a fiber of type $II^*$ or $III^*$ by Remark \ref{lct}. Moreover, since the Picard number of $Y$ is $10$, $Y$ has at most one such fiber. Now, take $C_f\in\mathcal{P}$ corresponding to a smooth fiber $F_f$. By Corollary \ref{keycor}, it suffices to show that for any other curve $C_g\in\mathcal{P}$, $C_f+C_g$ is GIT stable. If $C_g$ does not correspond to the fiber $F$, then we can argue as in the proof of Theorem \ref{mainHalphen} to conclude $C_f+C_g$ is stable. 
Otherwise, $F=F_g$ and since $C_g$ is GIT semistable by assumption, it follows from Lemma \ref{ineq}, Corollary \ref{hypersurftoric} and Proposition \ref{components} that $C_f+C_g$ is GIT stable.
\end{proof}

Then we can use the following argument to prove Theorem \ref{mainHalphen2}:
\begin{proof}[Proof of Theorem \ref{mainHalphen2}]
The case $m=2$ has already been dealt with in \cite{azconstr}. Here we observe that we can always find destabilizing toric divisors for almost all Halphen pencils such that $Y$ contains a fiber $F$ of type $II^*$ or $III^*$ (see \cite{stab} and \cite[\S7]{azconstr}). The only exceptions are those pencils for which the curve corresponding to $F$ is semistable, which must be as in one of the Examples 7.46, 7.47, 7.55 in \cite{azconstr}. But for these exceptional cases we can then refer to Lemma \ref{sscurve}.

Therefore, it suffices to consider the case when $\mathcal{P}$ is a Halphen pencil of index $m=3$ which is not as in Example \ref{exc.case3} and $Y$ has a fiber $F$ of type $II^*$. We claim that for such pencils there will always be a toric divisor computing $lct(Y,F)$, hence making $\calP$ non-stable. More precisely, we can show that there exists an exceptional divisor on $Y$ over a base point of $\calP$ which is toric in the sense of Definition \ref{DefToricDiv}.

To prove the claim, the key ingredients we will use are: \cite[Lemma 4.1]{azconstr}; the fact that $(-1)$-curves are trisections of $Y$; and Remark \ref{fac1}.

Our argument is as follows: First, recall that a fiber of type $II^*$ has the following dual graph:
$$
\xygraph{
    \bullet ([]!{+(0,-.3)} {2\alpha_1}) - [r]
    \bullet ([]!{+(0,-.3)} {4\alpha_3}) - [r]
    \bullet ([]!{+(.3,-.3)} {6\alpha_4}) (
        - [d] \bullet ([]!{+(.3,0)} {3\alpha_2}),
        - [r] \bullet ([]!{+(0,-.3)} {5\alpha_5})
        - [r] \bullet ([]!{+(0,-.3)} {4\alpha_6})
        - [r] \bullet ([]!{+(0,-.3)} {3\alpha_7})
        - [r] \bullet ([]!{+(0,-.3)} {2\alpha_8})
         - [r] \bullet ([]!{+(0,-.3)} {\alpha_9})
)}$$

We will now show $\alpha_4$ is always toric (Definition \ref{DefToricDiv}). Note that if $\alpha_4$ is non-exceptional then $\alpha_4$ is automatically toric so, it suffices to assume $\alpha_4$ is an exceptional curve. Moreover, since $\calP$ is a pencil of plane curves of degree nine there are four cases to consider:

\begin{enumerate}[{Case} 1]
    \item $\alpha_2$ is the strict transform of a cubic: This case cannot happen. In fact there exists a unique triple cubic in $\calP$ and that corresponds to the unique multiple fiber (see e.g. \cite[Corollary 5.6.3]{enriques}). Alternatively, note that if we blow down all the $\alpha_i$ (except $\alpha_2$) and the $(-1)$-curve, then the proper image of $\alpha_2$ in $\mathbb{P}^2$ would be a cubic curve that  has self-intersection smaller than five, which is an absurd.
    
    \item $\alpha_2$ is the strict transform of a conic: Again, this case cannot happen. By contradiction, assume $\alpha_2$ does come from a conic. If $\alpha_1$ is non-exceptional, then $\alpha_1$ and $\alpha_9$ must be the strict transform of lines in $\mathbb{P}^2$. But then if we blow down $\alpha_j$ for $3\le j\le8$ and a $(-1)$-curve intersecting the $\alpha_j$'s, then the proper image of $\alpha_1$ or $\alpha_9$ would have self-intersection number greater than $1$, which is impossible. Now, if $\alpha_1$ is exceptional, then there are three possibilities:

    \begin{enumerate}[(a)]
        \item $\alpha_7$ is non-exceptional
        \item $\alpha_8$ and $\alpha_9$ are non-exceptional and given by lines but $\alpha_7$ is exceptional
        \item all $\alpha_j$ are exceptional except for $\alpha_2$ and $\alpha_9$
    \end{enumerate}
and we claim none of these cases can actually occur.
    
    First, arguing as in \cite[Section 5]{azconstr}, we can show the curve corresponding to the type $II^*$ fiber cannot consist of the union of a triple conic and a triple line. In particular, $\alpha_7$ is exceptional. 
    
    Second, if we assume $\alpha_8$ and $\alpha_9$ are non-exceptional and given by lines, then there is a $(-1)$-curve intersecting either $\alpha_7$, or both $\alpha_1$ and $\alpha_9$. If the former holds, then the strict transform of $\alpha_8$ has self-intersection number at least five. Otherwise, the strict transform of $\alpha_9$ has self-intersection number at least five. In any case we reach a contradiction.

Finally, assume $\alpha_9$ is given by a cubic. By \cite[Lemma 4.1]{azconstr}, there exists a $(-1)$-curve $E_1$ intersecting $\alpha_1$ or $\alpha_8$. Let $E_2$ be the other $(-1)$-curve introduced by the nine-fold blow-up. If $E_1$ intersects $\alpha_8$, then the strict transform of $\alpha_2$ has self-intersection number $1$ when we contract $E_1$ and the exceptional $\alpha_j$'s, which implies the proper image of $\alpha_2$ in $\mathbb{P}^2$ (after further contracting $E_2$) has self-intersection number less than three, a contradiction. Similarly, if $E_1$ intersects $\alpha_1$, we can argue  the proper image of $\alpha_9$ in $\mathbb{P}^2$ has self-intersection number seven or ten, which is again a contradiction.

    \item $\alpha_2$ is the strict transform of a line: Assume $\alpha_2$ comes from $3L$. In this case, we see that $\alpha_4$ is always an exceptional divisor of a blow-up at a point $p'$ infinitely near to a point $p\in L$ as Remark \ref{fac1}. In particular, $\alpha_4$ is toric. Note that Example \ref{exhal} is included in this case.
    
    \item $\alpha_2$ is exceptional: By \cite[Lemma 4.1]{azconstr}, either $\alpha_3$ or $\alpha_5$ must be non-exceptional. If either of them comes from a line, then $\alpha_4$ is toric. If $\alpha_3$ comes from a conic and $\alpha_5$ is non-exceptional, then $\alpha_9$ must come from a line and we are in the situation of Example \ref{exc.case3} (see also Remarks \ref{line4conic} and \ref{line4conic2}).
\end{enumerate}

This shows $\alpha_4$ is always toric, except for a Halphen pencil as in Example \ref{exc.case3},  as asserted.
\end{proof}

Lastly, we end this section with two explicit examples that show there are, indeed, stable and non-stable Halphen pencils of index $m=3$ yielding a fiber of type $II^*$.    

\begin{exe}\label{exhal}
Consider the cubic $C$ given by $
z^2y+x(y^2+xz)=0$. Let $Q$ be the conic $y^2+xz=0$ and let $L$ be the line $y=0$. Then the pencil generated by $C_f=2Q+5L$ and $C_g=3C$ is a Halphen pencil  of index three, say $\calP$, such that the corresponding rational elliptic surface contains a fiber of type $II^*$ and such pencil is GIT non-stable. 

To see why, let $\lambda$ be the one-parameter subgroup determined (in these coordinates) by $a_0=1,a_1=0$ and $a_2=-1$. Then, arguing as in \cite[Lemma 3.3]{lct}, we compute $\omega(fg,\lambda)=18$ and it follows from Lemma \ref{keylemma2} that
\[
6= \frac{\omega(fg,\lambda)}{3}=\frac{\omega(f,\lambda)+\omega(g,\lambda)}{3}\leq \frac{\omega(\calP,\lambda)}{3}=\frac{\omega(\calP,\lambda)}{(a_0-a_2)-(a_1-a_2)}
\]
Thus $\calP$ is non-stable by Proposition \ref{HM}.
\end{exe}

\begin{exe}
Consider the cubic $C$ given by $
(y^2+xz)(\alpha y+z)+\beta yx^2=0$, for some $\alpha,\beta\neq 0$. Let $Q$ be the conic $y^2+xz=0$ and let $L$ be the line $y=0$. Then the pencil generated by $C_f=4Q+L$ and $C_g=3C$ is a Halphen pencil  of index three such that the corresponding rational elliptic surface contains a fiber of type $II^*$ and such pencil is GIT stable by Lemma \ref{sscurve}. 
\label{exc.case3}
\end{exe}

\begin{rmk}
We observe that arguing as in \cite[Section 5]{azconstr}, we can show that if a Halphen pencil of index $m=3$ yields a fiber $F$ of type $II^*$ and the curve corresponding to $F$ consists in the union of a conic taken with multiplicity four and a line, then the conic and the line must intersect at two distinct points, say $P_1$ and $P_2$. Moreover, up to relabeling, the unique triple cubic in the pencil, say $3C$, is such that $C$ intersects $Q$ (resp. $L$) with multiplicity five (resp. one) at $P_1$ and it intersects $Q$ (resp. $L$) with multiplicity one (resp. two) at $P_2$. In particular, to obtain the corresponding rational elliptic surface $Y$ we must blow up the plane seven times at $P_1$ and two times at $P_2$.
\label{line4conic}
\end{rmk}

\begin{rmk}
Note that if $Q,C$ and $L$ are as in Remark \ref{line4conic}, then up to a change of coordinates we may assume $P_1=(0:0:1)$ $P_2=(1:0:0)$, $Q$ is the conic given by $y^2+xz=0$ and $L$ is the line given by $y=0$. This further implies that, $C$ can be taken to be a cubic as in Example \ref{exc.case3}.

In fact, up to scaling, the above implies $C$ is given by
\[
xz^2+\alpha y^3+ \beta x^2y +a xy^2+b xyz+cy^2z+d yz^2=0 \qquad \text{with\,\,$\alpha\neq 0$}
\]
Now, we further know the line $x=0$ is tangent to $C$ at $P_1$ (with multiplicity two, since $P_1$ cannot be a flex). Thus, we may further assume $d=0$ and $c=1$. We can then rewrite the defining polynomial of $C$ as
\[
(y^2+xz)(\alpha y+z)+xy(\beta x+ay+(b-\alpha)z)=0
\]
Thus,
\[
I_{P_1}(C,Q)=I_{P_1}(Q,L)+I_{P_1}(Q,L')+I_{P_1}(Q,L'')=3+I_{P_1}(Q,L'')
\]
where $L':x=0$ and $L'':\beta x+ay+(b-\alpha)z=0$. And since the intersection multiplicity $I_{P_1}(C,Q)$ is equal to five, we must have $I_{P_1}(Q,L'')=2$, which tells us $L''= L'$, hence $a=0$ and $b=\alpha$. In other words, $C$ is as in Example \ref{exc.case3}, as claimed.  
\label{line4conic2}
\end{rmk}

\section{Stability of nets of conics}
\label{conics}

In this Section we will now describe how one can apply the criterion in Theorem \ref{mainTHM} in order to obtain an alternative geometric description of the stability of nets of conics. The stability of linear systems of quadrics in general has been explored in \cite{quadrics}, \cite{quadricsMir}, \cite{wall} and \cite{genus5}. Here we provide an explicit description of the stability of nets of conics using Theorem \ref{thmINTRO}, and we also explain how our description agrees with the description in \cite{wall} by C.T.C Wall. But we point out that the connection between the two approaches can also be somewhat easily read off from \cite[Table 1]{translation}.

First, we recall Theorem \ref{thmINTRO} together with the work of Shah in  \cite{shah} tell us a net of conics $\calN$ is stable if and only if for any three cubics $C_f,C_g$ and $C_h$ in $\calN$, the sextic curve $C_f+C_g+C_h$ satisfies conditions $(i),(ii)$ and $(iii)$ from Section \ref{cubics}.

Therefore, we obtain: 

\begin{prop}
A net of conics $\calN$ is GIT stable if and only if the following conditions hold:
\begin{enumerate}[(i'')]
\item $\calN$ does not contain a double line
       \item at a base point of $\calN$ any conic in $\calN$ is smooth (in particular, the base locus is zero-dimensional and $\calN$ contains a smooth member)
    \item at a base point of $\calN$ no three conics in $\calN$ can be mutually tangent 
    \item every pencil contained in $\calN$ has a smooth member
\end{enumerate} 
\label{conicsStab}
\end{prop}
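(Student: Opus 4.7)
The plan is to apply Theorem \ref{mainTHM} with $k=d=n=2$, which reduces GIT stability of a net $\calN$ to GIT stability of the sextic $C_f+C_g+C_h$ for every triple of conics $C_f, C_g, C_h$ generating $\calN$ (equivalently, every linearly independent triple in $\calN$), and then to translate Shah's three criteria --- no multiple-line component, no consecutive triple point, no point of multiplicity at least $4$ --- into the four conditions on $\calN$. The linear-independence assumption on the triple is essential: it guarantees that if all three conics vanish at a point $p$, then every conic in $\calN$ vanishes at $p$, so $p$ must be a base point of $\calN$.

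For Shah's condition (i), a double-line component of $fgh$ arises either because some $C_i$ is itself a double line (ruled out by (i'')), or because two of the conics share a common linear factor $L$; in the latter case the pencil they generate has $L$ as a fixed component, hence has no smooth member, contradicting (iv'').

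For Shah's condition (iii), I would analyze $\mu_p(C_f)+\mu_p(C_g)+\mu_p(C_h) \ge 4$ at a point $p$, with each $\mu_p(C_i) \in \{0,1,2\}$. Either at least two conics are singular at $p$, in which case the pencil they span consists entirely of conics singular at $p$ (by linearity of the singularity condition), hence of reducible ones, violating (iv''); or exactly one $C_i$ is singular at $p$ with the other two smooth through $p$, in which case linear independence forces the triple to span $\calN$ and hence forces $p$ to be a base point, contradicting (ii''). Analogously, for Shah's (ii), blowing up $p$ shows that a consecutive triple point on the sextic corresponds precisely to the three conics being smooth at $p$ and mutually tangent there; linear independence again forces $p$ to be a base point, and (iii'') exactly excludes this configuration.

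The converse directions (each of (i'')--(iv'') being necessary for stability) follow by running the same analysis backwards and constructing explicit generating triples that violate Shah's conditions whenever one of the net conditions fails. The main obstacle will be the careful bookkeeping ensuring linear independence of the triples used throughout the multiplicity analysis; in particular, the case of one singular conic plus two smooth ones at a common point genuinely requires the triple to span $\calN$, since the same multiplicity-$4$ phenomenon can also arise from three distinct but linearly dependent conics lying in a sub-pencil, which however are not controlled by Theorem \ref{mainTHM} because they are not a set of generators.
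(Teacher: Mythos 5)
Your proposal is correct and takes essentially the same route as the paper: reduce via Theorem \ref{mainTHM} (with $k=d=n=2$) to GIT stability of the sextics $C_f+C_g+C_h$, invoke Shah's three conditions from Section \ref{cubics}, and translate them into the four conditions on $\calN$. The paper records only this reduction and leaves the entire translation implicit, so your case analysis --- in particular the multiplicity count $\mu_p(C_f)+\mu_p(C_g)+\mu_p(C_h)\ge 4$, the use of linearity of the singularity condition to hit $(iv'')$, and the observation that linear independence of the triple is what forces a common point to be a base point of the whole net --- supplies exactly the details the paper omits, and the residual bookkeeping you flag for linearly dependent triples is resolved by Lemma \ref{nobasepts}.
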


On the other hand, we can prove the following two lemmas:

\begin{lema}
    If a net of conics $\calN$ has a base point $p$, then there exists a conic in $\calN$ which is singular at $p$.
    \label{nobasepts}
\end{lema}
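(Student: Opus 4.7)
The plan is to reduce this lemma to a straightforward dimension count on the six-dimensional space of conic coefficients. After a linear change of coordinates, I may assume $p = (0:0:1)$ and write a general conic as
\[
f = Ax^2 + By^2 + Cz^2 + 2Dxy + 2Exz + 2Fyz.
\]
Then the condition that $p$ lies on $C_f$ is the single linear equation $C = 0$, while singularity of $C_f$ at $p$ amounts to the three linear conditions $C = E = F = 0$ (the vanishing of the gradient at $p$; note that $f(p) = C$ is anyway forced to vanish once $f_z(p) = 2C = 0$ by Euler's identity).

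Next I would use that $\calN$, being a net, corresponds to a three-dimensional linear subspace $W$ of the six-dimensional space of coefficient vectors. The hypothesis that $p$ is a base point of $\calN$ places $W$ inside the hyperplane $\{C = 0\}$, and imposing the two additional linear conditions $E = 0$ and $F = 0$ cuts out a subspace of $W$ of dimension at least $3 - 2 = 1$. Any nonzero element of that intersection is a nonzero quadratic form, hence represents a genuine conic in $\calN$ which is singular at $p$.

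The whole argument is essentially a one-step linear-algebra computation, so there is no real obstacle; the only minor point to verify is that the coefficient vector we produce is not the zero vector, but this is immediate from the fact that the intersection has positive dimension.
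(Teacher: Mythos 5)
Your proof is correct and rests on the same linear algebra as the paper's: singularity at the base point $p$ imposes only two (not three) independent linear conditions on the three-dimensional net, so a nonzero solution exists. The paper packages this as the vanishing of the determinant of the matrix of partials (using Euler's identity to put $p$ in its kernel and then passing to the transpose), whereas you normalize $p=(0:0:1)$ and count conditions directly; the content is identical.
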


\begin{proof}
    Choose generators $C_f,C_g$ and $C_h$ for $\calN$ and choose coordinates $(x_0:x_1:x_2)$ in $\P^2$. If $p=(a:b:c)$ is a base point of $\calN$, then $(a,b,c)$ is a non-trivial element in the kernel of the matrix
    \[
    A\doteq \begin{pmatrix} \frac{\partial C_f}{\partial x_0 }  & \frac{\partial C_f}{\partial x_1 }  & \frac{\partial C_f}{\partial x_2 }  \\ & & \\ \frac{\partial C_g}{\partial x_0 }  & \frac{\partial C_g}{\partial x_1 }  & \frac{\partial C_g}{\partial x_2 }  \\ & & \\ \frac{\partial C_h}{\partial x_0 }  & \frac{\partial C_h}{\partial x_1 }  & \frac{\partial C_h}{\partial x_2 } \end{pmatrix}
    \]
    In particular, $\det A=\det A^{t}=0$, which implies we can find $(\lambda:\mu:\nu)\in \P^2$ such that 

    \[
    \lambda \frac{\partial C_f}{\partial x_0 }  + \mu \frac{\partial C_g}{\partial x_0 }  + \nu \frac{\partial C_h}{\partial x_0 } = 0 
    \]
    \[
    \lambda \frac{\partial C_f}{\partial x_1 }  + \mu \frac{\partial C_g}{\partial x_1 }  + \nu \frac{\partial C_h}{\partial x_1 } = 0 
    \]
    \[
    \lambda \frac{\partial C_f}{\partial x_2 }  + \mu \frac{\partial C_g}{\partial x_2 }  + \nu \frac{\partial C_h}{\partial x_2 } = 0 
    \]
    
    And the above equations tell us the curve $\lambda C_f + \mu C_g + \nu C_h$ is singular at $p$.
\end{proof}

\begin{lema}
If a net of conics $\calN$ does not contain a double line and it does not have a base point, then every pencil contained in $\calN$ has a smooth member.
\end{lema}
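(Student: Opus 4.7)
The plan is to argue by contradiction: suppose some pencil $\calP \subset \calN$ contains no smooth member. Since $\calN$ contains no double line, every conic in $\calP$ is a union of two distinct lines (i.e., has rank exactly $2$). Pick two generators $C_1, C_2$ of $\calP$, write $C_i = L_i \cup M_i$ and let $p_i = L_i \cap M_i$ denote the singular point. The assumption forces the polynomial $\det(sA_1 + tA_2)$ to vanish identically in $(s,t)$. Expanding this cubic and using that for a rank-$2$ symmetric matrix $A_i$ the adjugate satisfies $\mathrm{adj}(A_i) = \lambda_i\, p_i p_i^T$ with $\lambda_i \ne 0$, the two intermediate coefficients simplify to $\lambda_1 Q_2(p_1)$ and $\lambda_2 Q_1(p_2)$, giving exactly the geometric conditions $p_1 \in C_2$ and $p_2 \in C_1$.

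A short case analysis now takes over. The condition $p_1 \in C_2$ puts $p_1$ on $L_2$ or $M_2$, so three of the four component lines pass through $p_1$; the analogous statement at $p_2$ then yields two possibilities: either $p_1 = p_2$ (all four lines concurrent at a single point $p$), or two of the component lines must coincide as the unique line $\overline{p_1 p_2}$, so that $C_1$ and $C_2$ share a line. In the concurrent case, every member of $\calP$ is singular at $p$; in coordinates with $p = (0:0:1)$, the pencil $\calP$ lies as a line in the $\P^2$ parametrizing $\{ax^2 + bxy + cy^2\}$, and the double-line locus in this $\P^2$ is the smooth conic $\{b^2 - 4ac = 0\}$. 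Since any line in $\P^2$ meets a smooth conic in a length-$2$ subscheme, this produces a double line in $\calP$, contradicting the assumption on $\calN$.

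Hence $C_1$ and $C_2$ must share a common line $\ell_0$, and by linearity $\ell_0$ is a component of every conic in $\calP$. Extend $\{C_1, C_2\}$ to a set of generators $\{C_1, C_2, C_3\}$ of $\calN$: either $\ell_0 \subset C_3$, in which case $\ell_0$ lies in every conic of $\calN$ and the base locus of $\calN$ is positive-dimensional, or $\ell_0 \cap C_3$ is a length-$2$ subscheme by Bezout, each of whose points lies in every member of $\calN$ and is therefore a base point. In both cases $\calN$ has a base point, contradicting the hypothesis. The main obstacle is the case analysis in the second paragraph; everything else reduces to the adjugate computation and a Bezout count.
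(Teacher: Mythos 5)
Your proof is correct, and it reaches the same underlying dichotomy as the paper by a different (and more self-contained) route. The paper simply \emph{asserts} the normal-form classification of pencils of conics with no smooth member into three types --- (I) one-dimensional base locus, (II) $\langle x_0^2,x_1^2\rangle$, (III) $\langle x_0x_1,(ax_0+bx_1)(cx_0+dx_1)\rangle$ --- then excludes (I) by the no-base-point hypothesis, (II) by the no-double-line hypothesis, and (III) by exhibiting an explicit double line $\lambda C_f+C_g$ via a square-root formula for $\lambda$. You instead \emph{derive} the classification: the identical vanishing of $\det(sA_1+tA_2)$ together with the adjugate identity $\mathrm{adj}(A_i)=\lambda_i p_ip_i^T$ forces $p_1\in C_2$ and $p_2\in C_1$, whence either all four component lines are concurrent (this absorbs the paper's cases (II) and (III)) or the two conics share a line (the paper's case (I)). Your treatment of the concurrent case --- the pencil is a line in the $\P^2$ of binary quadratics, which must meet the smooth discriminant conic --- is a cleaner, coordinate-free replacement for the paper's explicit $\lambda=2\sqrt{abcd}-(ac+bd)$ (which, incidentally, appears to have a typo: the discriminant condition gives $\lambda=\pm2\sqrt{abcd}-(ad+bc)$, not $-(ac+bd)$). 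What the paper's version buys is brevity, since the Segre-type normal forms for pencils of conics are standard; what yours buys is that nothing is left unjustified, and the existence (rather than an explicit formula) of the destabilizing double line is all that is needed. Both arguments handle the shared-line case the same way in substance: a common component forces base points of $\calN$ by B\'ezout.
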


\begin{proof}
    We first observe that if a pencil of conics $\calP$ does not contain a smooth member then one of the following conditions hold (up to change of coordinates in $\P^2$):
    \begin{enumerate}[(I)]
        \item the base locus of $\calP$ is one-dimensional; or
        \item $\calP$ is generated by  $C_f: x_0^2=0$ and $C_g: x_1^2=0$; or
        \item $\calP$ is generated by 
                 $C_f: x_0x_1=0$ and $C_g: (ax_0+bx_1)(cx_0+dx_1)=0$ for some $a,b,c,d \in \C$ not all zero.
        
           \end{enumerate}
    
    Now, since $\calN$ does not contain a double line and it does not have a base point, it is clear that $\calN$ cannot contain a pencil as in (I) or (II). Thus it suffices to check $\calN$ cannot contain a pencil as in (III) either. In fact, a pencil $\calP$ as in (III) always contains a double line, namely the curve $\lambda C_f + C_g$, where  $\lambda=2\sqrt{abcd}-(ac+bd)$.

\end{proof}

Therefore, Proposition \ref{conicsStab} becomes:

\begin{prop}
A net of conics $\calN$ is GIT stable if and only if $\calN$ does not contain a double line and it does not have a base point.
\label{conicsStab2}
\end{prop}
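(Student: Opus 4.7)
The plan is to obtain the result as a clean packaging of Proposition \ref{conicsStab} together with the two lemmas just proved. The four-condition criterion in Proposition \ref{conicsStab} must be shown to collapse to the simpler pair of conditions in the statement, and each half of the equivalence follows by a short logical step.

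For the forward implication, I would assume $\calN$ is GIT stable and verify the two asserted conditions. The absence of a double line is exactly condition $(i'')$. For the absence of base points, I would argue by contradiction: suppose $p$ is a base point of $\calN$. Then Lemma \ref{nobasepts} produces a conic $C \in \calN$ which is singular at $p$, contradicting condition $(ii'')$, which requires every conic in $\calN$ passing through a base point to be smooth there.

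For the converse, I would assume $\calN$ contains no double line and has no base point, and then verify each of $(i'')$ through $(iv'')$ in Proposition \ref{conicsStab}. Condition $(i'')$ is our hypothesis, and $(ii'')$ and $(iii'')$ are vacuously satisfied since $\calN$ has no base point. Condition $(iv'')$ — that every sub-pencil of $\calN$ contains a smooth member — is exactly the content of the second lemma proven just above, whose hypotheses (no double line, no base point) match ours.

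The main obstacle: there really is none at this stage — the heavy lifting was already carried out by Proposition \ref{conicsStab} (via Theorem \ref{thmINTRO} and Shah's classification of semistable sextics) and by the two preceding lemmas, so the proof here amounts to little more than the logical assembly outlined above.
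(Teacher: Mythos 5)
Your proof is correct and follows exactly the route the paper intends: the paper derives Proposition \ref{conicsStab2} directly from Proposition \ref{conicsStab} together with Lemma \ref{nobasepts} (to rule out base points in the forward direction) and the subsequent lemma on sub-pencils (to get condition $(iv'')$ in the converse), with $(ii'')$ and $(iii'')$ holding vacuously when there are no base points. Nothing is missing.
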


Now, by \cite{shah} and Theorem \ref{pencils}, we further know $\calN$ is unstable if and only if we can find three conics $C_f,C_g$ and $C_h$ in $\calN$ such that the sextic curve $C_f+C_g+C_h$ satisfies one of the conditions $(a),(b)$ or $(c)$ from Section \ref{cubics}. In particular,

\begin{prop}
A net of conics $\calN$ is unstable if and only if we can find three conics $C_f,C_g$ and $C_h$ in $\calN$ such that one of the following (non-mutually-exclusive) conditions holds:
\begin{enumerate}[(a'')]
    \item the base locus of $\calN$ is one-dimensional
          \item 
          \begin{itemize}
              \item $C_f=2L$,
              \item $C_g$ is tangent to $C_f$ at a base point of $\calN$, and
              \item $C_h$ is arbitrary
          \end{itemize}
          \item $C_f=L+L'$ and either:
          \begin{itemize}
              \item  $C_g=L+L''$ and $C_h$ is smooth and tangent to $C_f$ at the intersection point $L\cap L'$; or
              \item $C_g$ and $C_h$ are smooth and $C_f,C_g$ and $C_h$ are mutually tangent at $L\cap L'$. 
          \end{itemize}

    \item $C_f$ and $C_g$ intersect at a base point of $\calN$ which is a singular point of both $C_f$ and $C_g$, and $C_h$ is arbitrary
\end{enumerate}
\label{conics Unstab}
\end{prop}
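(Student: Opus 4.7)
The plan is to apply Theorem~\ref{mainTHM} specialized to $k=2, d=2, n=2$, so that $\calN$ is GIT unstable if and only if there exist three generators $C_f, C_g, C_h \in \calN$ whose sum $C_f+C_g+C_h$ is a GIT unstable plane sextic. Combined with Shah's classification \cite{shah} of unstable sextics (recalled at the start of Section~\ref{cubics}), this reduces the problem to translating Shah's three conditions --- (a) a line component with a triple point on it remaining triple with threefold tangent after one blow-up, (b) a quadruple point with threefold or fourfold tangent, and (c) a point of multiplicity at least five --- into the conditions (a''), (b''), (c''), (d'') on triples of conics. The key local bookkeeping is that a conic contributes to the sextic multiplicity at a point $p$ by $0$ if it misses $p$, by $1$ if it is smooth at $p$, and by $2$ if it is singular at $p$ (meaning $p$ is either the node of a pair of intersecting lines or a point on a double line through $p$).

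For the forward direction, each of (a''), (b''), (c''), (d'') makes the sextic satisfy one of Shah's conditions by a direct multiplicity and tangent-cone computation. Condition (a'') yields $L^3$ as a factor of the sextic, giving Shah (a). Condition (b'') produces a quadruple point at the base point $p$ with tangent cone $L^2 \cdot L \cdot (\text{tangent of } C_h) = L^3 \cdot (\ldots)$ (where $C_h$ passes through $p$ automatically since $p$ is a base point), hence Shah (b). Both subcases of (c'') give a quadruple point at $L \cap L'$ with tangent cone of the form $L^3 \cdot L'$, again Shah (b). Finally (d'') yields multiplicity $2 + 2 + 1 = 5$ at the shared singular base point, giving Shah (c).

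For the converse I would analyze each of Shah's cases in turn. In case Shah (c), multiplicity at least five forces two of the three generators to be singular at $p$, and since all three generators then vanish at $p$, the point $p$ is automatically a base point of $\calN$, whence (d'') holds after relabeling. In case Shah (b), the quadruple multiplicity at $p$ decomposes as $(2,2,0)$ or $(2,1,1)$; a tangent-cone analysis (distinguishing whether each contribution of $2$ arises from a double line or from a nodal pair of lines) matches the configuration to (b'') or to the second subcase of (c''), except in the hard $(2,2,0)$ subcase. Case Shah (a), once Shah (b) and (c) are excluded, forces a triple line component $L^3$ in the sextic, which either matches (a'') directly or reduces to the same $(2,2,0)$ subcase.

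The main obstacle is therefore the $(2,2,0)$ subcase: $C_f = 2L$, $C_g = L + L''$ with $p = L \cap L''$, and the third generator $C_h$ does not pass through $p$, so $p$ is not a base point of $\calN$. Here the pencil $\langle C_f, C_g\rangle$ has $L$ as a common component, so either $\calN$ itself shares $L$ and (a'') holds, or else by B\'ezout $C_h$ meets $L$ in two (possibly coinciding) points $q_1, q_2$, each of which is necessarily a base point of $\calN$ since $C_f, C_g, C_h$ all vanish there. If some $q_i$ coincides with $p$, then $C_f, C_g$ are both singular at the base point $p$ and (d'') holds; otherwise $C_g$ is smooth at $q_i$ with tangent $L$, so $C_g$ is tangent to $C_f = 2L$ at the base point $q_i$, and (b'') holds after passing to a triple in $\calN$ witnessing this configuration.
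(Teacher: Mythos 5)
Your proposal is correct and follows essentially the same route as the paper: Proposition~\ref{conics Unstab} is stated there as an immediate consequence of Theorem~\ref{mainTHM} combined with Shah's conditions $(a)$, $(b)$, $(c)$ from Section~\ref{cubics}, with the case-by-case translation left implicit. Your multiplicity and tangent-cone bookkeeping --- in particular identifying that Shah's conditions force contributions $(2,2,1)$, $(2,1,1)$, $(2,2,0)$ or a triple line, and resolving the $(2,2,0)$ configuration by passing to the base points in $C_h\cap L$ to land in $(b'')$ or $(a'')$ --- supplies exactly the details the paper omits.
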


And as a consequence we obtain:
\begin{cor}
    If a net of conics has no base points, then it is GIT semistable.
\end{cor}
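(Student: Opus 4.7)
The plan is to prove the contrapositive via Proposition \ref{conics Unstab}: I will show that each of the four unstable configurations $(a'')$--$(d'')$ forces the net $\calN$ to have at least one base point, so that the absence of base points precludes every avenue to instability and therefore yields semistability.

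Cases $(a'')$, $(b'')$, and $(d'')$ are essentially immediate from the statement itself. In $(a'')$ the base locus is assumed one-dimensional, in particular non-empty; in $(b'')$ and $(d'')$ the configuration explicitly involves a specified point $p$ that is required to be a base point of $\calN$. Each of these is directly ruled out by the hypothesis that the base locus of $\calN$ is empty.

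The one case needing a brief verification is $(c'')$. Here $C_f = L+L'$ and the relevant point is $p = L \cap L'$. I would note that in both sub-bullets the three conics $C_f, C_g, C_h$ all pass through $p$: indeed $C_f$ does because $p$ lies on $L$ (and on $L'$), the conic $C_g$ does because either $C_g = L + L''$ shares the component $L$ or $C_g$ is smooth and tangent to $C_f$ at $p$ (so in particular contains $p$), and $C_h$ contains $p$ by its tangency condition with $C_f$ at $p$. Since $C_f, C_g, C_h$ generate $\calN$, the point $p$ lies in the base locus of $\calN$, again contradicting the hypothesis.

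Combining these observations, none of $(a'')$--$(d'')$ can occur for a base-point-free net, so by Proposition \ref{conics Unstab} such a net cannot be GIT unstable, i.e.\ it is GIT semistable. The main (and only mild) obstacle is the bookkeeping in case $(c'')$ to confirm that $L \cap L'$ really is a common point of all three chosen conics and hence a base point of the full net; once this is recorded, the argument is entirely formal.
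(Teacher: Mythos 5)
Your overall route is exactly the paper's (the corollary is stated there as an immediate consequence of Proposition \ref{conics Unstab}, with the observation that every unstable configuration forces a base point), and cases $(a'')$, $(b'')$ and $(d'')$ are handled correctly. The one step that is not free, and that you assert rather than prove, is the claim in case $(c'')$ that ``$C_f, C_g, C_h$ generate $\calN$.'' Proposition \ref{conics Unstab} only speaks of three conics \emph{in} $\calN$, and three distinct members of a net need not be linearly independent: in the second sub-bullet of $(c'')$ one can have, say, $C_f=xy$, $C_g=yz+x^2$ and $C_h=C_f+C_g$, which are distinct, smooth where required, and mutually tangent at $L\cap L'=(0:0:1)$, yet span only a pencil. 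In that situation their common point is a base point of that pencil but not necessarily of the net, so your conclusion ``hence $p$ is a base point of $\calN$'' does not follow from what you wrote.

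The gap is closable, but it needs to be addressed explicitly. In the first sub-bullet independence is automatic: every member of the pencil spanned by $L+L'$ and $L+L''$ contains the line $L$, so the smooth conic $C_h$ cannot lie in it, and the three conics do generate $\calN$. In the second sub-bullet one must either (i) invoke the convention, as in the introduction's formulation of Theorem \ref{thmINTRO}, that the destabilizing triple in the criterion can be taken to be a set of \emph{generators} (this is also what the proof of Lemma \ref{keylemma2} actually requires, since it begins by extending the given hypersurfaces to a generating set), or (ii) argue separately that a dependent triple as above cannot occur in a base-point-free net satisfying the remaining hypotheses. Without one of these, case $(c'')$ is not finished. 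I would add a sentence disposing of the first sub-bullet as above and a sentence appealing to the generator convention for the second; as written, the assertion of generation is the precise point where the argument could fail.
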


Note further that if a net of conics $\calN$ contains a smooth member, then by Corollary \ref{mainCOR2} and \cite[1.12]{mumford} we also know that if $\calN$ is unstable, then we can find two conics $C_f,C_g \in \calN$ such that $C_f+C_g$ has a triple point. That is, up to relabeling, we can find a conic $C_f\in 
\calN$ which is singular and another conic $C_g\in \calN$ which intersects $C_f$ at a singular point of $C_f$.

Finally, we explain how Propositions \ref{conicsStab} (or \ref{conicsStab2}) and \ref{conics Unstab} above agree with the criteria in \cite{wall} which says:

\begin{teo}[\cite{wall}]
A net of conics is GIT stable (resp. semistable) if and only if the corresponding discriminant cubic curve is smooth (resp. has at worst nodes).
\end{teo}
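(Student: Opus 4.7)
The plan is to translate the criteria of Propositions~\ref{conicsStab2} and~\ref{conics Unstab} into conditions on the singularities of the discriminant cubic. Choose generators $C_f,C_g,C_h$ of a net $\calN$ with associated symmetric $3\times 3$ matrices $A,B,C$, and define
\[
\Delta_{\calN} \doteq \bigl\{(\lambda:\mu:\nu)\in \P^2 \,:\, \det(\lambda A + \mu B + \nu C) = 0\bigr\}.
\]
Points of $\Delta_{\calN}$ correspond precisely to singular conics in $\calN$, and the rank of $\lambda A + \mu B + \nu C$ controls the local behavior of $\det$ at the corresponding point of $\P^2$.

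The first step is to assemble the following classical dictionary, which can be verified by a direct local computation on the determinantal equation or read off from \cite[Table~1]{translation}:
\begin{itemize}
\item A rank-$1$ point of $\Delta_{\calN}$ (a double line in $\calN$) is a singularity strictly worse than a node (generically a cusp).
\item A rank-$2$ point $[L\cup L']\in \Delta_{\calN}$ is singular if and only if the node $L\cap L'$ is a base point of $\calN$; absent further degeneracy, the singularity is an ordinary node.
\item $\Delta_{\calN}$ contains a line if and only if $\calN$ contains a pencil of singular conics, and $\Delta_{\calN}=\P^2$ if and only if the base locus of $\calN$ is one-dimensional.
\end{itemize}
With this dictionary, the stable direction follows from Proposition~\ref{conicsStab2}: stability is equivalent to having no double line and no base point, which by the dictionary (and by Lemma~\ref{nobasepts}, which produces a singular conic at every base point of $\calN$) is equivalent to $\Delta_{\calN}$ having neither cusps nor nodes, i.e.\ being smooth.

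For the semistable direction I would go through cases (a'')--(d'') of Proposition~\ref{conics Unstab} and show that each forces a singularity of $\Delta_{\calN}$ strictly worse than a node, the converse being immediate from the dictionary. Case (a'') makes $\Delta_{\calN}=\P^2$, and (b'') produces a cusp. In cases (c'') and (d'') the pencil of singular conics yields a line component of $\Delta_{\calN}$, and then the tangency condition of (c'') or the shared singular-point condition of (d'') forces the line and the residual conic component to meet in a tacnode (or the residual conic itself to degenerate). This last step is the main obstacle: one must rule out the possibility that the line component meets the residual conic transversely at two distinct points, which would only give two ordinary nodes. I would handle this by choosing coordinates adapted to the common line $L$ of (c'') or to the common singular point $p$ of (d''), writing $A,B,C$ in the resulting normal form, and computing the factorization of $\det(\lambda A + \mu B + \nu C)$ directly to read off the local intersection multiplicity of the line component with the residual conic.
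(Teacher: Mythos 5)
There is a genuine gap, and it sits in the first item of your dictionary. It is not true that a rank-$1$ point of $\Delta_{\calN}$ (a double line $2L$ in the net) is always a singularity strictly worse than a node. Take $\calN=\langle x_0^2,x_1^2,x_2^2\rangle$: this net has no base points, hence is semistable (and contains double lines, hence is strictly semistable), and $\Delta_{\calN}=\{\lambda\mu\nu=0\}$ has an ordinary node at each of the three rank-$1$ points. What is true is that a rank-$1$ point is always a \emph{singular} point of $\Delta_{\calN}$ (the adjugate of a rank-$1$ symmetric matrix vanishes, so all first partials of $\det$ do), and that it is worse than a node precisely when the quadratic term $u^{t}\,\mathrm{adj}(\mu B+\nu C)\,u$ --- which is the discriminant of the pencil of degree-two divisors cut out on $L$ by the residual pencil --- degenerates, which happens exactly when $L$ contains a base point of $\calN$; only then does condition (b'') (or (d'')) of Proposition \ref{conics Unstab} apply. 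This distinction is exactly the content needed for the semistable direction, and your proposal skips it: the forward claim that ``(b'') produces a cusp'' is asserted rather than derived from the tangency hypothesis, and, more seriously, the step ``the converse is immediate from the dictionary'' fails, because under your dictionary a worse-than-node singularity at a rank-$1$ point only certifies a double line in $\calN$, which makes the net non-stable but not unstable. A similar correction is needed for your third item: $\Delta_{\calN}\equiv0$ does not force a one-dimensional base locus (e.g.\ $\langle x_0^2,x_0x_1,x_1^2\rangle$, or the paper's own table entry $\langle x_0^2,x_1^2,(x_0+x_1)^2\rangle$), so the converse of case (a'') must also route through the other unstable configurations.

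The stable direction of your argument does survive once the first dictionary item is weakened to ``rank-$1$ points are singular'': no double line and no base point (Proposition \ref{conicsStab2} together with Lemma \ref{nobasepts}) is indeed equivalent to smoothness of $\Delta_{\calN}$, via the adjugate computation at rank-$2$ points. For comparison, the paper avoids the rank dictionary: it normalizes a singular point of $\Delta$ to $(0:0:1)$, reads off from $\delta_{00}=\delta_{01}=\delta_{10}=0$ that the corresponding conic is singular and, in the rank-$2$ case, that its node is a base point of the net, and then converts everything into Shah's conditions on the sextic $C_{f'}+C_{g'}+C_{h'}$; the worse-than-node cases are handled by an explicit table of normal forms. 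Your determinantal route can be completed, but the local computation at rank-$1$ points that you omitted is precisely where the stable/semistable distinction lives.
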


Note that any conic $C_f \in \P^2$ can be described by an equation of the form
\[
x^{t}A_f x=\sum_{i=0}^2\sum_{j=0}^2 f_{ij} x_ix_j=0 
\]
where $A_f=(f_{ij})$ is a symmetric $3\times 3$ matrix. In particular, given a net of conics, say $\lambda C_f+\mu C_g+\nu C_h=0$, its discriminant is the ternary cubic form
\[
\Delta=\Delta(\lambda,\mu,\nu) \doteq \det(\lambda A_f+\mu A_g+\nu A_h).
\]

Note also that the singularities of $\Delta=0$ do not depend on the choice of generators $C_f,C_g$ and $C_h$.  

We can prove the following:

\begin{prop}
The plane cubic $\Delta=0$ is singular if and only if there exists a choice of generators $C_{f'},C_{g'}$ and $C_{h'}$ such that the sextic curve $C_{f'}+C_{g'}+C_{h'}$ is GIT non-stable.
\label{quadrics}
\end{prop}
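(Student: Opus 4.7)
The plan is to translate singularity of $\Delta=0$ at a point into explicit algebraic conditions on the generators of $\calN$, and then match these against the Shah criterion for non-stability of plane sextics recalled in Section \ref{cubics}. Since a change of basis of $\calN$ induces a linear change of coordinates on $(\lambda:\mu:\nu)$ that preserves the singular locus of $\Delta=0$, I may assume any given singular point of $\Delta=0$ is $(1:0:0)$, corresponding to some generator $C_{f'}$. A direct computation via the expansion $\det(A_{f'}+tB)=\det(A_{f'})+t\cdot\mathrm{tr}(\mathrm{cof}(A_{f'})B)+O(t^2)$ shows that $(1:0:0)$ is singular on $\Delta=0$ exactly when $\det A_{f'}=0$, $\mathrm{tr}(\mathrm{cof}(A_{f'})A_{g'})=0$, and $\mathrm{tr}(\mathrm{cof}(A_{f'})A_{h'})=0$, where $\mathrm{cof}$ denotes the adjugate.

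For the forward direction I split into cases by the rank of $A_{f'}$. If $\mathrm{rank}(A_{f'})=1$, then $C_{f'}$ is a double line and the cofactor matrix vanishes automatically; the sextic $C_{f'}+C_{g'}+C_{h'}$ then contains a double line as a component, so it fails Shah's criterion. If $\mathrm{rank}(A_{f'})=2$, then $C_{f'}=L_1L_2$ is a union of two distinct lines meeting at some point $p$, $\mathrm{cof}(A_{f'})$ is a nonzero multiple of $pp^{t}$, and the two trace conditions translate to $C_{g'}(p)=C_{h'}(p)=0$; the sextic then has multiplicity at least $2+1+1=4$ at $p$ and is again non-stable.

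For the converse I use Shah's criterion in the opposite direction: a non-stable sextic $C_{f'}+C_{g'}+C_{h'}$ must either contain a multiple line, have a point of multiplicity $\geq 4$, or exhibit a consecutive triple point. The first two cases are essentially reversible from the analysis above, with one extra observation: if two of the three conics are both singular at a common point $p$ (or share a common linear factor), then the entire pencil they span consists of singular conics, so $\Delta$ vanishes identically along a line in $(\lambda:\mu:\nu)$-space, forcing the cubic to be reducible and hence singular. I expect the main obstacle to be the consecutive-triple-point case, since then no individual conic in the net needs to be singular. The plan there is to use that a consecutive triple point forces all three conics to be smooth at a common point $p$ with a common tangent; choosing coordinates so that $p=(0:0:1)$ with tangent $y=0$ makes every matrix in the net have vanishing $(3,3)$- and $(1,3)$-entries, after which direct expansion yields $\Delta=-\tfrac{1}{4}\,a(\lambda,\mu,\nu)\,e(\lambda,\mu,\nu)^{2}$ for linear forms $a,e$, which is automatically singular (it contains a double line).
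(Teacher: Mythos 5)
Your proposal is correct, and at the top level it follows the same strategy as the paper: both directions are reduced to Shah's criterion for sextics together with a case analysis on how the relevant conic degenerates and a direct computation of $\Delta$. The genuine differences are in the execution, and they buy you something. Where the paper detects the singular point $(0:0:1)$ through the explicit coefficients $\delta_{00}=\det A_{h'}$, $\delta_{01}=-g'_{22}$, $\delta_{10}=-f'_{22}$ after normalizing the singular generator to $x_0x_1=0$, you use the coordinate-free expansion $\det(A_{f'}+tB)=\det(A_{f'})+t\,\mathrm{tr}(\mathrm{adj}(A_{f'})B)+O(t^2)$ together with $\mathrm{adj}(A_{f'})=c\,pp^{t}$ in rank two; this makes the forward direction and the reversal of the multiplicity-$\geq 4$ case literally the same computation, which is tidier. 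In the converse, the paper splits into four cases (double line; two lines meeting at a base point; three mutually tangent conics; a pencil all of whose members are singular), while you organize by the three clauses of Shah's criterion and absorb the paper's Case IV into the single observation that a pencil of singular conics forces $\Delta$ to vanish on a line of $\mathbb{P}^2_{(\lambda:\mu:\nu)}$, hence to be reducible and singular; your consecutive-triple-point computation also avoids the paper's preliminary step of locating a singular member of the net, normalizing instead by the tangency datum alone to get $\Delta$ equal to a linear form times the square of a linear form. Two small points should be made explicit in a write-up: first, in the consecutive-triple-point case you may assume the multiplicity splits as $1+1+1$ among the three conics, since a $2+1+0$ split whose tangent cone is a triple line forces a double line into the sextic and is therefore already covered by your first case; second, the degenerate possibility $\Delta\equiv 0$ (which does occur, e.g.\ for nets with one-dimensional base locus) should be declared singular by convention, exactly as the paper implicitly does in its Case IV.
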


\begin{proof}
Write $\Delta=\sum_{i,j}\delta_{ij}\lambda^i\mu^j\nu^{3-i-j}$ and assume the cubic $C:\Delta=0$ is singular. Then  there exists a choice of  generators $C_{f'},C_{g'}$ and $C_{h'}$ (equivalently, a choice of coordinates $(\lambda:\mu:\nu)$) such that $\delta_{00}=\delta_{01}=\delta_{10}=0$. That is, $C$ is singular at $(0:0:1)$.

In particular, $\det(A_{h'})=\delta_{00}=0 \iff C_{h'}$ is singular. Now, if $C_{h'}$ is a double line, then $C_{f'}+C_{g'}+C_{h'}$ is non-stable by \cite{shah}. And, similarly, if $C_{h'}$ is the union of two lines, then we claim the intersection point of the two lines is a base point of the net, which implies $C_{f'}+C_{g'}+C_{h'}$ contains a point of multiplicity $\geq 4$, hence it is non-stable by \cite{shah}. In fact, we can choose coordinates $(x_0:x_1:x_2)$ such that $C_{h'}$ is given by $x_0x_1=0$. In particular,
\[
\delta_{01}=-g'_{22} \quad \text{and} \quad \delta_{10}=-f'_{22} 
\]
which implies $(0:0:1)\in C_{f'}\cap C_{g'}$.

Conversely, assume there exists a choice of  generators $C_{f'},C_{g'}$ and $C_{h'}$ such that $C_{f'}+C_{g'}+C_{h'}$ is non-stable. Then, up to relabeling, one of the following conditions holds:
\begin{enumerate}[{Case} I]
    \item $C_{h'}$ is a double line
    \item $C_{h'}$ is the union of two lines that intersect at a base point of the net
     \item the three conics are mutually tangent at a base point of the net
    \item $C_{g'}$ and $C_{h'}$ form a pencil such that any member is singular.
\end{enumerate}

If either one of Cases I or II holds, then we can find coordinates $(x_0,x_1,x_2)$ in $\P^2$ such that $\delta_{00}=\det(A_{h'})=0$ and 
\[
\delta_{01}=-g'_{22}(h'_{01})^2 \quad \text{and} \quad \delta_{10}=-f'_{22}(h'_{01})^2 
\]
In Case I we can assume $C_h'$ is given by $x_0^2=0$, hence $h'_{01}=0$. And in Case II we can assume $C_h'$ is given by $x_0x_1=0$, which is singular at $(0:0:1)$, and since such point is a base point of the net, we must further have $f'_{22}=g'_{22}=0$. In any case, the conclusion is that $\delta_{00}=\delta_{01}=\delta_{10}=0$ and the cubic $C$ is singular at $(0:0:1)$.

Now, assume Case III holds. Since we can always find a singular member in the net, and we have already considered Cases I and II, we may choose coordinates $(x_0,x_1,x_2)$ in $\P^2$ such that $C_{h'}$ is given by $x_0x_1=0$ and the base point in question is $(0:1:0)$. The case when $C_{f'}$ or $C_{g'}$ is singular is included in Case IV. Thus, we may assume the curves $C_{f'}$ and $C_{g'}$ are such that $g'_{12}=g'_{11}=f'_{12}=f'_{11}=0$ but $f'_{22}\cdot g'_{22}\neq 0$. Then, $C$ is the singular cubic
\[
(f'_{22}\lambda +  g'_{22}\mu )(f'_{01}\lambda  +  g'_{01}\mu + \nu)^2=0
\]

Finally, if Case IV holds, then the cubic $C$ contains the line $\lambda=0$ and it is therefore singular as well.
\end{proof}

Similarly, by making convenient choices of coordinates $(x_0:x_1:x_2)$ in $\P^2$, we can also prove:

\begin{prop}
The plane cubic $\Delta=0$ has a singularity worse than a node if and only if there exists a choice of generators $C_{f'},C_{g'}$ and $C_{h'}$ such that the sextic curve $C_{f'}+C_{g'}+C_{h'}$ is GIT unstable.
\label{quadrics}
\end{prop}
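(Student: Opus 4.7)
The plan is to mirror the strategy of the previous Proposition~\ref{quadrics}, using Proposition~\ref{conics Unstab} together with Shah's classification of unstable sextics (conditions $(a),(b),(c)$ from Section~\ref{cubics}) to translate the GIT instability of $C_{f'}+C_{g'}+C_{h'}$ into a condition on the discriminant cubic $\Delta=0$. The key observation is that a plane cubic has a singularity worse than a node at a point $p$ if and only if, after placing $p=(0:0:1)$ and writing $\Delta=\sum_{i,j}\delta_{ij}\lambda^i\mu^j\nu^{3-i-j}$, one has $\delta_{00}=\delta_{01}=\delta_{10}=0$ together with the degeneracy of the quadratic part, i.e.\ $\delta_{11}^{2}-4\delta_{20}\delta_{02}=0$ (cusp, triple point, or worse).

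For the ``only if'' direction, I would assume $\Delta=0$ has a singularity worse than a node and, by a change of generators (equivalently, a change of coordinates $(\lambda:\mu:\nu)$), move that point to $(0:0:1)$. As in the proof of Proposition~\ref{quadrics}, the vanishing $\delta_{00}=\delta_{01}=\delta_{10}=0$ already forces $C_{h'}$ to be singular, so $C_{h'}$ is either a double line or a pair of distinct lines, and one can place it in a normal form via a change of coordinates $(x_0:x_1:x_2)$ in $\P^2$. Then I would expand the extra condition $\delta_{11}^{2}=4\delta_{20}\delta_{02}$ in terms of the coefficients of $C_{f'}$ and $C_{g'}$ and check, case by case, that it forces exactly one of the unstable configurations $(a''),(b''),(c''),(d'')$ from Proposition~\ref{conics Unstab} to occur --- essentially, that the pencil residue $C_{f'}+C_{g'}$ must acquire a triple point, a quadruple point with sufficient tangency, or a component of multiplicity forcing Shah's conditions $(a),(b),(c)$.

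For the ``if'' direction, I would go through the four cases of Proposition~\ref{conics Unstab}. Case $(a'')$ is immediate: if the base locus of $\calN$ is one-dimensional, every matrix $\lambda A_{f'}+\mu A_{g'}+\nu A_{h'}$ has a common kernel direction, so $\Delta\equiv 0$, which is trivially worse than having only nodes as singularities. For cases $(b''),(c''),(d'')$ I would choose coordinates $(x_0:x_1:x_2)$ in $\P^2$ adapted to the configuration (placing the base point at $(0:0:1)$ and putting $C_{h'}$ in a normal form such as $x_0^2$ or $x_0 x_1$), and compute the relevant $\delta_{ij}$'s directly, verifying that beyond the vanishings $\delta_{00}=\delta_{01}=\delta_{10}=0$ already produced in the previous proposition, the additional tangency/multiplicity hypothesis in $(b''),(c''),(d'')$ forces $\delta_{20}$ and $\delta_{11}$ (or their combination $\delta_{11}^2-4\delta_{20}\delta_{02}$) to vanish, yielding a cusp or worse at $(0:0:1)$.

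The main obstacle is the case-by-case bookkeeping in the ``only if'' direction: for each normal form of the singular $C_{h'}$, one must carefully track which sub-configuration of $C_{f'},C_{g'}$ is compatible with the degeneracy of the quadratic part of $\Delta$, and show that each such sub-configuration indeed falls under one of $(a'')$--$(d'')$. The underlying computations are short linear-algebra expansions of a $3\times 3$ determinant, so no deeper technical tool beyond Proposition~\ref{conics Unstab} is needed.
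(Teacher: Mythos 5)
Your overall strategy --- reduce to the local conditions $\delta_{00}=\delta_{10}=\delta_{01}=0$ and $\delta_{11}^2-4\delta_{20}\delta_{02}=0$ at a normalized point, then match configurations against Proposition \ref{conics Unstab} --- is in the same spirit as the paper, which itself only sketches this proposition by exhibiting a table of normal forms. However, there are two concrete places where your plan, as written, would break down. First, in the ``if'' direction your template (put $C_{h'}$ in the normal form $x_0^2$ or $x_0x_1$ and produce a cusp at the point $(0:0:1)$ of the $(\lambda:\mu:\nu)$-plane corresponding to $C_{h'}$) does not apply to case $(d'')$: there the distinguished conics are $C_f$ and $C_g$, both singular at a common base point $p$, while $C_h$ is arbitrary and may well be smooth, so the point corresponding to $C_{h'}$ need not even lie on $\Delta$. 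The worse-than-node singularity in that case is the entire line $\{\nu=0\}$ dual to the pencil $\langle C_f,C_g\rangle$, and you must show it occurs with multiplicity two in $\Delta$: writing $p=(0:0:1)$, one has $\mathrm{adj}(\lambda A_f+\mu A_g)=\det(B)\,e_3e_3^{t}$ for the upper-left $2\times2$ block $B$, whence $\partial\Delta/\partial\nu|_{\nu=0}=\det(B)\,(A_h)_{33}=0$ precisely because $C_h$ passes through $p$. Without this extra verification, a line component meeting a smooth residual conic transversally would only give nodes, and the implication would fail.

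Second, in the ``only if'' direction the degeneracy $\delta_{11}^2=4\delta_{20}\delta_{02}$ does not translate into a tangency condition on the generators $C_{f'},C_{g'}$ you started with. For instance, with $C_{h'}=2L$ the quadratic part of $\Delta$ at $(0:0:1)$ is $-\tfrac14\,\mathrm{disc}\bigl((\lambda C_{f'}+\mu C_{g'})|_L\bigr)$, and this is already a perfect square as soon as $C_{f'}$ and $C_{g'}$ have a common point on $L$ with \emph{distinct, non-tangent} directions; the sextic $2L+C_{f'}+C_{g'}$ then has a quadruple point with only a twofold tangent and is \emph{not} unstable by Shah's list. The double root of that quadratic part singles out a specific member $C_{g''}$ of the residual pencil which \emph{is} tangent to $L$ at the base point, and only the sextic $2L+C_{f'}+C_{g''}$ realizes configuration $(b'')$. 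So your case analysis must include a second change of generators (within the pencil complementary to $C_{h'}$) dictated by the tangent direction of the cusp of $\Delta$; since the proposition only asserts the existence of \emph{some} choice of generators, this is a fixable omission, but the step ``check that the degeneracy forces one of $(a'')$--$(d'')$ for the chosen $C_{f'},C_{g'}$'' is false as stated.
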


In fact, up to a change of coordinates, we have the following correspondence (see also \cite[Table 1]{translation}):

\renewcommand{\arraystretch}{2}
\begin{table}[H]
\centering
\begin{tabular}{c|c | c}
The cubic $C$ & The net gen.  by $C_{f'},C_{g'}$ and $C_{h'}$ & $C_{f'}+C_{g'}+C_{h'}$  \\
\hline
$2\lambda^2\mu+\nu^3=0$ & $\lambda x_0x_1 + \mu x_2^2 + \nu(x_0^2+x_1x_2)=0$ & $x_0x_1x_2^2(x_0^2+x_1x_2)=0$ \\
$\nu(\nu\lambda+\mu^2)=0$ & $\lambda x_0^2 + \mu x_0x_1 + \nu(x_1+x_2)x_2=0$  & $x_0^3x_1x_2(x_1+x_2)=0$ \\
$\nu\mu^2=0$ & $\lambda x_0x_2 + \mu x_0x_1 + \nu x_2^2=0$ & $x_0^2x_1x_2^3=0$ \\
$\nu^2\mu=0$ & $\lambda x_1^2 + \mu x_2^2 +\nu x_0x_1=0$ & $x_0x_1^3x_2^2=0$ \\
 $\nu\mu^2=0$ & $\lambda x_0x_2 +\mu x_1x_2 + \nu(x_0^2+x_2^2)=0$ & $x_0x_1x_2^2(x_0^2+x_2^2)=0$ \\
 $\nu^3=0$ & $\lambda x_1^2 + \mu x_0x_1 + \nu (x_0^2+x_1x_2)=0$ & $x_0x_1^3(x_0^2+x_1x_2)=0$ \\
 $\Delta\equiv 0$ & $\lambda x_0^2 + \mu x_1^2 + \nu (x_0+x_1)^2=0$ & $x_0^2x_1^2(x_0+x_1)^2=0$ \\
 $\Delta\equiv 0$ & $\lambda x_0^2 + \mu x_0x_1 + \nu x_0x_2 =0$ & $x_0^4x_1x_2=0$
\end{tabular}
\end{table}

  \noindent \textbf{A remark.} Lastly, we end this section with a remark. Note that any net of conics $\calN$ can be thought of as divisor of bidegree $(1,2)$ in $\P^2\times\P^2$. It defines a Fano threefold $X$ which lies in the family \textnumero 2.24. In \cite[Section 2]{wall}, it is shown the stability of $X$ under the action of $PGL(3)\times PGL(3)$ agrees with the stability of $\calN$ under the action of $PGL(3)$. Here we further observe that the Jacobian criterion gives us the following: If $X$ is smooth, then either conditions $(ii'')$ and $(iii'')$ in Proposition \ref{conicsStab} hold, or $\calN$ does not have a base point.  In particular, combining this with Lemma \ref{nobasepts} and Propositions \ref{conicsStab}, \ref{conicsStab2}  and \ref{conics Unstab} we obtain:
  
  \begin{prop}
  Let $X$ be a smooth Fano threefold in the family \textnumero 2.24. Then $X$ is always GIT semistable, and $X$ is GIT strictly semistable whenever $\calN$ contains a double line.
  \end{prop}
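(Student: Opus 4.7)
The plan is to combine Wall's equivalence between the GIT stability of $X$ under $PGL(3)\times PGL(3)$ and that of $\calN$ under $PGL(3)$ with the Jacobian criterion just recalled, together with Lemma \ref{nobasepts} and Propositions \ref{conicsStab2} and \ref{conics Unstab}.

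First I would show that smoothness of $X$ forces $\calN$ to have no base point at all. By the Jacobian criterion (stated just before the proposition), either conditions $(ii'')$ and $(iii'')$ of Proposition \ref{conicsStab} hold, or else $\calN$ has no base point; in the latter case there is nothing left to check. In the former case, $(ii'')$ says that every member of $\calN$ is smooth at each base point of $\calN$, while Lemma \ref{nobasepts} says that if $\calN$ admits a base point then some conic in $\calN$ must be singular there. The two statements are incompatible unless the base locus of $\calN$ is empty, so in either case $\calN$ is base-point-free.

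Next I would invoke Proposition \ref{conics Unstab}: a net of conics is GIT unstable only if one of the conditions $(a'')$--$(d'')$ holds. A direct inspection of the four cases shows that each of them exhibits a base point of $\calN$---either as part of a one-dimensional base locus in $(a'')$, or as a common singular/tangent point of the three chosen members in $(b'')$, $(c'')$ and $(d'')$. Since $\calN$ has no base point, none of these can occur, so $\calN$ is GIT semistable, and by Wall's theorem so is $X$. For the last assertion, if $\calN$ contains a double line then Proposition \ref{conicsStab2} shows $\calN$ is not GIT stable; combined with the semistability proved above, this yields strict semistability for $\calN$, and hence for $X$.

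The only---rather mild---obstacle will be to verify carefully that each of the four unstable configurations in Proposition \ref{conics Unstab} really does force a base point of $\calN$ (the case $(c'')$ being the least immediate, since one must observe that the singular point $L\cap L'$ of $C_f$ is forced to lie on the other two chosen conics as well). Once this small check is in hand, everything else is an immediate concatenation of the results already proved in this section with Wall's theorem.
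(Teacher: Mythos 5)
Your proposal is correct and follows essentially the same route the paper takes: it uses the Jacobian criterion together with Lemma \ref{nobasepts} to show a smooth $X$ forces $\calN$ to be base-point-free, then deduces semistability from Proposition \ref{conics Unstab} (this is exactly the paper's corollary that a base-point-free net is semistable), obtains non-stability in the presence of a double line from Proposition \ref{conicsStab2}, and transfers everything to $X$ via Wall's equivalence of the two GIT problems. The only point you rightly flag---checking that each unstable configuration, in particular case $(c'')$, produces a base point of $\calN$---is also implicit in the paper's own deduction, so nothing is missing.
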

  
  As a consequence, we can recover \cite[Corollary 4.7.8]{fanos} by \cite[Lemma 4.7.7]{fanos} and by the same argument of \cite[Theorem 3.4]{0SS} (see also the arguments in \cite[\S5.3]{Hat} and Appendix \hyperref[app]{A}). That is, we conclude $X$ is strictly K-semistable whenever $\calN$ contains a double line and it has no base points.

\section*{Appendix A: On the Chow stability of complete intersections}
\label{app}

In this appendix we extend Corollary \ref{chow} as follows:

 \begin{prop}\label{Sano's}
 Let $X\subset \mathbb{P}^n$ be a complete intersection defined by degree $d$ hypersurfaces $H_1,\cdots ,H_r$ for $r\le n$. If $X$ is Chow (semi)stable (see the definition in \cite{mumford}), then the linear system $\mathcal{L}$ generated by $H_1,\cdots,H_r$ is GIT (semi)stable.
 \end{prop}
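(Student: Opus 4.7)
The plan is to argue by contrapositive using the numerical criterion of Proposition \ref{HM}. Suppose $\calL$ is GIT non-stable (resp.\ unstable); then there exists a normalized 1-parameter subgroup $\lambda$ of $SL(V)$ with
\[
\frac{\omega(\calL, \lambda)}{A_{\lambda}} \;\geq\; \frac{dr}{n+1} \qquad (\text{resp.\ } >).
\]
By Lemma \ref{keylemma} I can pick generators $H_{f_1}, \ldots, H_{f_r}$ of $\calL$ realizing $\omega(\calL, \lambda) = \sum_{j=1}^{r} \omega(f_j, \lambda)$. Since any such generating set defines the same ideal $(f_1, \ldots, f_r) = I(X)$, it still cuts out $X$ scheme-theoretically.

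The crux is to translate the resulting lower bound on $\sum_j \omega(f_j, \lambda)/A_{\lambda}$ into a lower bound on the Chow--Mumford weight $\mu_{\mathrm{Chow}}(X, \lambda)$ that witnesses Chow non-stability (resp.\ instability) of $X$. I would follow the strategy of \cite[Theorem 3.4]{0SS}, adapted in \cite[\S5.3]{Hat}, and route through log canonical thresholds: Chow (semi)stability of the complete intersection $X$ furnishes an $\alpha$-invariant bound that, combined with Lemma \ref{toric} and Corollary \ref{mainCOR} applied to the degree-$dr$ hypersurface $H_{f_1} + \cdots + H_{f_r}$, would force $lct(\P^n, H_{f_1} + \cdots + H_{f_r}) > (n+1)/(dr)$ (resp.\ $\geq$), hence $\sum_j \omega(f_j, \lambda)/A_{\lambda} < dr/(n+1)$ (resp.\ $\leq$), contradicting the displayed inequality.

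A parallel, purely invariant-theoretic route would use the Cayley trick to express the Chow form of $X$ as a resultant of $f_1, \ldots, f_r$ and auxiliary linear forms, and then compare weights directly via the Koszul resolution of $\mathcal{O}_X$. This yields an estimate of the shape $\mu_{\mathrm{Chow}}(X, \lambda) \geq c_1 \sum_j \omega(f_j, \lambda) - c_2 A_{\lambda}$ with constants $c_1, c_2 > 0$ depending only on $d, r, n$, tuned so that the Chow threshold (scaling with $\deg X = d^r$ and $\dim X = n-r$) lines up with the GIT threshold $dr/(n+1)$ for $\calL$.

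The main obstacle in either route is exactly this weight-comparison step with the correct normalizations: on the LCT side, matching the $\alpha$-bound produced by Chow semistability with the threshold $(n+1)/(dr)$; on the invariant-theoretic side, carrying out the explicit weight bookkeeping on the Koszul/resultant description. Once the comparison is in hand, the rest is formal, and both routes should yield the same conclusion.
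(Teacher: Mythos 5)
There is a genuine gap: both of your routes stop exactly at the step that constitutes the whole content of the proposition, namely the comparison between the Hilbert--Mumford weight of $\mathcal{L}$ in the Pl\"ucker embedding and the Chow weight of $X$. Your first route is moreover logically backwards. The lct--GIT dictionary available in the paper (Corollary \ref{hypersurftoric}, Corollary \ref{mainCOR}) only gives \emph{instability implies an lct upper bound}; the converse requires that a toric divisor compute the log canonical threshold, and nothing in the hypothesis ``$X$ is Chow (semi)stable'' produces the lower bound $lct(\P^n, H_{f_1}+\cdots+H_{f_r})>(n+1)/(dr)$ that you want to extract. Indeed the P\l oski-curve remark in Section \ref{sec:lct} shows the lct can be strictly smaller than the toric infimum governing GIT weights, so an ``$\alpha$-invariant bound'' of the kind you invoke is not forced by Chow stability and cannot close the argument. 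Your second route (Cayley trick/resultants) is the right kind of statement, but the inequality $\mu_{\mathrm{Chow}}(X,\lambda)\ge c_1\sum_j\omega(f_j,\lambda)-c_2A_\lambda$ with correctly tuned constants is precisely the assertion to be proved, and you give no mechanism for producing it or for controlling the sign of the error term.

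For comparison, the paper resolves this by a global geometric argument rather than pointwise weight bookkeeping: it forms the normalized graph $Y$ of the rational map from the Grassmannian $G$ to the Chow variety $C$, and proves the $SL(n+1)$-equivariant linear equivalence $p_2^*M\sim p_1^*L-bE$ with $E$ an effective $p_1$-exceptional divisor and $b\ge0$ (using $\rho(G)=1$, relative nefness, and \cite[Proposition 1.4]{GIT} to preserve linearizations). The desired inequality $\mu^{L}(p,\lambda)\ge\mu^{p_2^*M}(\tilde p,\lambda)$ then follows from the elementary fact that $\mu^{\mathcal{O}_Y(-E)}(y,\lambda)\le0$ for an invariant effective divisor, and the normalization $a=1$ is pinned down by testing against the single point $(x_0^d,\ldots,x_{r-1}^d)$ with an explicit destabilizing one-parameter subgroup. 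If you want to salvage your approach, you should either carry out that equivariant line-bundle comparison or do the explicit resultant-weight computation; as written, the proposal records the correct target but not a proof.
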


In order to prove this, we first recall the following:

\begin{defi}
 Let $X$ be a projective variety on which $SL(n+1)$ acts. A line bundle $L$ on $X$ is called a $SL(n+1)$-linearized line bundle if $SL(n+1)$ acts on $\mathbf{L}=\mathrm{Spec}_X\left(\bigoplus_{m\ge0}L^{\otimes m}\right)$ such that the canonical projection $\mathbf{L}\to X$ is $SL(n+1)$-equivariant.
 
 Now, given $X$ as above and $L$ a $SL(n+1)$-linearized line bundle, we have that for any $x\in X$ and any one-parameter subgroup $\lambda$ of $SL(n+1)$, there exists a unique point $y\in X$ such that $y=\lim_{t\to0}\lambda(t)\cdot x$. Then the Hilbert-Mumford weight is defined by
 \[
 \mu^L(x,\lambda)=-\textrm{the weight of the action of }\lambda \textrm{ on }L\otimes k(y).
 \]
 where $k(y)$ is the residue field corresponding to $y$.
 
\end{defi}

Then the Hilbert-Mumford criterion for GIT stability \cite[Theorem 2.1]{GIT} tells us that if $L$ is ample, then a closed point $x\in X$ is GIT stable (resp., semistable) if and only if  $\mu^L(x,\lambda)>0$ (resp., $\mu^L(x,\lambda)\ge0$) for any non-trivial one-parameter subgroup $\lambda$.

With this in mind, we can now prove  Proposition \ref{Sano's}.

 \begin{proof}[Proof of Proposition \ref{Sano's}]
 
 Suppose that the $H_i$ are represented by homogeneous polynomials $h_i$, each of degree $d$, and denote $\mathcal{L}$ by $(h_1,\cdots,h_r)$. As before, we consider $(h_1,\cdots,h_r)$ as a point in the Grassmannian variety $G$ parametrizing $r$-dimensional subspaces of $\Gamma(\mathbb{P}^n,\mathcal{O}(d))$. 
 
 Let $\mathcal{W}\subset G\times\Gamma(\mathbb{P}^n,\mathcal{O}(d))$ be the universal subspace associated with $G$ and let $\mathcal{X}\subset G\times\mathbb{P}^n$ be the closed subscheme defined by the ideal generated by the image of $\mathcal{W}\otimes\mathcal{O}(-d)\to\mathcal{O}_{G\times\mathbb{P}^n}$. Further, let $U\subset G$ be the locus of points $p$ such that the fiber $\mathcal{X}_p$ has dimension $n-r$. Then $U$ is Zariski open, $\mathcal{X}|_U\to U$ is flat and there exists a morphism $q:U\to C$ where $C$ is the Chow variety parametrizing $r$-codimensional cycles of degree $d^r$. This morphism is the composition of the natural map from $U$ to the Hilbert scheme induced by $\mathcal{X}|_U$ and the Hilbert--Chow morphism (cf.~\cite[\S5.4]{GIT}). Moreover, we view $C$ as a subvariety of $\mathbb{P}(W)$, where $W=\otimes^{r+1}(S^{d^r}(\mathbb{C}^{n+1}))$. For details, see e.g.~\cite[\S1.16]{mumford}.
 

 

Now, recall that the Chow stability of a cycle is defined to be the GIT stability of the corresponding point in $C$, with respect to the canonical   very ample $SL(n+1)$-linearized line bundle $M$ on $C$ which is the restriction of $\mathcal{O}_{\mathbb{P}(W)}(1)$ to $C$. 


Let $L$ be an ample generator of $\mathrm{Pic}(G)$ and let $Y$ be the normalized graph of the rational map $r:G\dashrightarrow C$ induced by $q$. Note that $Y$ admits an $SL(n+1)$-left action compatible with $q$. Denote by $p_1:Y\to G$ and $p_2:Y\to C$ the projections. We claim that we have the following $SL(n+1)$-equivariant linear equivalence
\begin{equation}
p_2^*M\sim p_1^*L-bE
\label{linearequiv}
\end{equation}
where $E$ is a $p_1$-exceptional Cartier (effective) divisor.

In particular, given any $p\in U$ corresponding to $(h_1,\cdots, h_r)$ and any arbitrary non-trivial one-parameter subgroup $\lambda$ of $SL(n+1)$, if we let $\tilde{p}\in Y$ be the  point corresponding to $p$, then 
\begin{align*}
\mu^{L}(p,\lambda)&=\mu^{p_1^*L}(\tilde{p},\lambda)\\
&=\mu^{p_2^*M}(\tilde{p},\lambda)-b\mu^{\mathcal{O}_Y(-E)}(\tilde{p},\lambda)\\
&\ge\mu^{p_2^*M}(\tilde{p},\lambda)>0,\, (\textrm{resp., }\ge0)
\end{align*}
whenever $X$ is Chow (semi)stable \cite[Theorem 2.1]{GIT}. Here, we remark that for any $SL(n+1)$-stable effective Cartier divisor $E$ on $Y$, $\mu^{\mathcal{O}_Y(-E)}(y,\lambda)\le0$ for any closed point $y\in Y$ and any one-parameter subgroup $\lambda: \mathbb{C}^{\times}\to SL(n+1)$. 

Thus, it suffices to prove the claim. We first argue that we can always find a one-parameter subgroup $\lambda_0$ of $SL(n+1)$ and a point $p_0\in U$ such that the sign of $\mu^L(p_0,\lambda_0)$ coincides with the sign of  $\mu^M(q(p_0),\lambda_0)$. In fact we can exhibit $p_0$ and $\lambda_0$ such that $\lambda_0$ destabilizes both of $p_0$ and $q(p_0)$. In particular, up to replacing $M$ and $L$ by positive multiples, we may further assume that $\mu^{M}(q(p_0),\lambda_0)=\mu^L(p_0,\lambda_0)$. 

Recall that $SL(n+1)$ acts on $G$ as follows. Given $g\in SL(n+1)$, $g$ maps $(h_1,\cdots ,h_r)$ to $(g(h_1),\cdots,g(h_r))$, where
 \[
 g(h_i)(gx)=h_i(x) \qquad \forall\,\, x\in\mathbb{P}^n.
 \]
 Moreover, $q:U\to C$ is $SL(n+1)$-equivariant for this action. Consider $p_0\in U$ to be the point corresponding to the linear system $(x_0^d,x_1^d,\cdots,x_{r-1}^d)$ and let $\lambda_0$  be  a one-parameter subgroup of $SL(n+1)$ acting on the subspace in $\mathbb{P}^n$ spanned by $x_0,\ldots,x_{r-1}$ with weight 
 $-(n-r+1)$ and on the subspace spanned by $x_r,\ldots,x_{n}$ with weight $r$. Then $\lambda_0$ destabilizes both of $p_0$ and $q(p_0)$.



Now, because $\rho(G)=1$ we know that there exist some integers $a,b\in\mathbb{Z}$ such that
$p_2^*M\sim ap_1^*L-bE$, where $E$ is a $p_1$-exceptional Cartier divisor. Indeed, we consider ${p_1}_{*}p_2^*M$ as a divisor on $G$ and then ${p_1}_{*}p_2^*M\sim aL$ for some $a$. Thus we denote $p_2^*M=ap_1^*L-bE$ as a divisor for some $E$ and $b$. We see that since $p_2^*M-ap_1^*L=-bE$ is $p_1$-nef, $E$ is effective and $b\ge0$ by \cite[Lemma 3.39]{komo}. Moreover, by \cite[Proposition 1.4]{GIT}, we may further assume that the above linear equivalence preserves $SL(n+1)$-linearizations. 

Finally, observe that because $E$ is fixed by $SL(n+1)$, we can consider $\mathcal{O}_Y(-E)$ to be a $SL(n+1)$ stable subsheaf of $\mathcal{O}_Y$. Moreover, around $p_0\in U$ we have that $G$ and $Y$ are isomorphic. Thus, letting $\tilde{p_0}\in Y$ be the corresponding point to $p_0$, we can compare $\mu^{p_2^*M}(\tilde{p_0},\lambda_0)$ with $\mu^{ap_1^*L-bE}(\tilde{p_0},\lambda_0)$ to conclude $a=1$. Note that $\lambda_0$ acts on the fiber $\mathcal{O}_Y(-E)\otimes k(\tilde{p_0})=\mathcal{O}_Y\otimes k(\tilde{p_0})$ trivially since $\tilde{p_0}\not \in E$. Therefore, (\ref{linearequiv}) holds.
  \end{proof}



\bibliography{references}

\end{document}